\definecolor{DarkGreen}{RGB}{0,190,0}
\setlist{nosep}
\numberwithin{equation}{section}
\newcommand{\footnotetextplain}[1]{\begingroup\def\@thefnmark{}%
  \long\def\@makefntext##1{\parindent 0pt\noindent ##1}\@footnotetext{#1}
  \endgroup}
\newcommand{\TeoremaAmbFinalMarcat}[1]{%
  \expandafter\gdef\csname end#1\endcsname{\@endtheorem}}
\hfill\rule{2.5mm}{2.5mm} \vspace{\parskip} } % end proof
\newtheorem{theorem}{Theorem}
\newtheorem{proposition}{Proposition}[section]
\newtheorem{corollary}[proposition]{Corollary}
\newtheorem{lemma}[proposition]{Lemma}
\newtheorem{claim}{Claim}
\newtheorem{conj}{Conjecture} 
\theoremstyle{definition}
\newtheorem{definition}[proposition]{Definition} \TeoremaAmbFinalMarcat{defi}
\newtheorem{remark}[proposition]{Remark} \TeoremaAmbFinalMarcat{rem}
\def\@enum@{\list{\csname label\@enumctr\endcsname}%
           {\usecounter{\@enumctr}\def\makelabel##1{\hss\llap{##1}}
           \itemsep=2pt\parsep=0pt\topsep=3pt plus 1pt minus 1 pt}}
\newcommand{\xset}[1]{\left\{ #1 \right\}}
\newcommand{\xnorm}[1]{ \Vert #1 \Vert }
\newcommand{\zz}[1]{\mathbb #1}
\newcommand{\D}{\mathcal{D}}
\title{Local geometry of random
geodesics on negatively curved surfaces}
\author{Jayadev S.~Athreya}
\author{Steven P.~ Lalley}
\author{Jenya Sapir}
\author{Matt Wroten}
\subjclass[2010]{primary: 37D40; secondary 37E35, 37B10}
\keywords{self-intersection; random tessellation; geodesic; hyperbolic surface; Poisson line process}
\email{jathreya@uw.edu}
\email{lalley@galton.uchicago.edu}
\email{sapir@math.binghamton.edu}
\email{mwroten@cshl.edu}
\address{Department of Mathematics, University of Washington, PO Box 354350, Seattle, WA, 98195-4350}
\address{Department of Statistics, University of Chicago, 5734 University Avenue, Chicago, IL 60637}
\address{Department of Mathematical Sciences, Binghamton University, PO Box 6000 Binghamton, New York 13902-6000}
\address{Cold Spring Harbor Laboratory, One Bungtown Road Cold Spring Harbor, NY 11724}
\thanks{Athreya is supported by National Science Foundation CAREER
grant DMS 1559860.}
\thanks{Lalley is supported by National Science Foundation Award  DMS 1612979.}
\begin{document}
\maketitle
%\newpage
%\tableofcontents

\begin{abstract}
We show that the tessellation of a compact, negatively curved surface
induced by a  long random geodesic segment, when properly scaled, 
looks locally like a \emph{Poisson line process}. This implies that
the global statistics of the tessellation -- for instance, the
fraction of triangles -- approach those of the limiting Poisson line
process.
\end{abstract}

\section{Main Results: Intersection Statistics of Random
Geodesics}\label{sec:local}

\subsection{Local Statistics}\label{ssec:local} Any  sufficiently long
geodesic segment $\gamma$ on a compact, negatively curved surface $S$
partitions $S$ into a finite number of non-overlapping geodesic
polygons of various shapes and sizes, whose vertices\footnote{A long
  segment of a random geodesic ray doesn't quite induce a
  tessellation, as there will be two faces [triangles, quadrilaterals,
  or whatever] that contain the two ends of the geodesic segment. We
  ignore these, however, since they will not influence statistics when
  the length of the geodesic segment is large.}  are the
self-intersection points of $\gamma$. If a geodesic segment $\gamma$
of length $T$ is chosen by selecting its initial tangent vector $v$ at
random, according to (normalized) Liouville measure $\mu_{L}$ on the unit
tangent bundle $T^1S$, then with probability $1$, as
$T \rightarrow \infty$ the maximal diameter of a polygon in the
induced partition will converge to $0$, and hence the number of
polygons in the partition will become large. The goal of this paper is
to elucidate some of the statistical properties of this random
polygonal partition for large $T$. Our main result will be a
\emph{local} geometric description of the partition: roughly, this
will assert that in a neighborhood of any point $x\in S$ the partition
will, in the large$-T$ limit, look as if it were induced by a
\emph{Poisson line process} \cite{miles:pnas1}, \cite{miles:pnas2}. We
will also show that this result has implications for the \emph{global}
statistics of the partition: for instance, it will imply that with
probability $\approx 1$ the fraction of polygons in the partition that
are \emph{triangles} will stabilize near a non-random limiting value
$\tau_{3}>0$.

\begin{definition}\label{definition:plp}
A \emph{Poisson line process} $\mathcal{L}$ of intensity $\kappa >0$
is a random collection $\mathcal{L}=\{L_{n} \}_{n\in
\zz{Z}}$ of lines in $\zz{R}^{2}$ constructed as follows.  Let $\{(R_{n},\Theta_{n})
\}_{n\in \zz{Z}}$ be the points of a Poisson point
process\footnote{The ordering of the points doesn't really matter, but
for definiteness take $\dotsb <R_{-1}<0<R_{0}<R_{1}<\dotsb$. The
assumption that $\{(R_{n},\Theta_{n})
\}_{n\in \zz{Z}}$ is a Poisson point process of intensity $\kappa
/\pi$ is equivalent to the assumption that $\{R_{n} \}_{n\in \zz{R}}$
is a Poisson point process of intensity $\kappa$ on $\zz{R}$ and that
$\{\Theta_{n} \}_{n\in \zz{Z}}$ is an independent sequence of
i.i.d. random variables with uniform distribution on $[0,\pi]$.}
of intensity $\kappa /\pi  $ on the infinite strip $\zz{R}\times [0,\pi )$.
For each $n\in \zz{Z}$ let $L_{n}$ be
the line 
\begin{equation}\label{eq:point-to-line-mapping}
	L_{n}:=\{(x,y)\in \zz{R}^{2}\,:\, R_{n}=x \cos \Theta_{n}+y\sin \Theta_{n}\}.
\end{equation}
\end{definition}

That is, we consider the line through the origin of angle $\Theta_n$ to the horizontal, and $L_n$ is the line orthogonal to this line passing through it at distance $R_n$ from the origin. Observe that the mapping \eqref{eq:point-to-line-mapping} of points
$(r,\theta)$ to lines is a bijection from the strip $\zz{R}\times
[0,\pi )$ to the space of all lines in $\zz{R}^{2}$. 
For any convex region $\Omega \subset \zz{R}^{2}$, call the restriction to
$\Omega$ of a Poisson line process a Poisson line process in $\Omega$.
It is not difficult to show (see Lemma~\ref{lemma:local-sparsity}
below) that, with probability one, if $\Omega$ is a bounded domain
with piecewise smooth boundary then the Poisson line process in
$\Omega$ will consist of only finitely many line segments, and that at
most two line segments will intersect at any point of $\Omega$. For
any realization of the process, the line segments will uniquely
determine (and be determined by) their intersection points with
$\partial \Omega$, grouped in (unordered) pairs.

In order to formulate our main result, we must explain how geodesic
segments in a small neighborhood of a point $x \in S$ are associated
with line segments in the tangent space $T_{x}S$.  We shall assume
throughout that the Riemannian metric $\varrho$ on $S$ is
$C^{\infty}$; therefore, geodesics are $C^{\infty}$ curves that depend
smoothly on their initial tangent vectors. Furthermore, we will only
consider geodesics  of unit speed. Fix $x \in S$, and
consider a small disk $D (x,r)$ on $S$ of radius $r$ centered at
$x$. A (unit-speed) geodesic ray $\gamma_{t}(v) $ with initial tangent
vector $v\in T^{1}S$ distributed according to normalized Liouville
measure $\mu_{L}$ (that is, $v=(y,\theta)$ where $y\in S$ is
distributed according to normalized surface area measure and $\theta$
is distributed according to the uniform distribution on the set
$[0, 2\pi]$ of directions based at $x$) will, with probability one,
eventually enter $D (x,r)$, at a time roughly of order $1/r$ (this
will follow from our main results). Thus, if we wish to study the
local intersection statistics of a random geodesic segment of (large)
length $T$ in a neighborhood of $x$, we should focus on the
intersections of the geodesic segment  with neighborhoods
of $x$ of diameters proportional to $1/T$.

%Jenya: since this is just the ball of radius $\alpha/T$, do we really need this?
For any $\alpha>0$ and $T>0$, set $\D_T(x,\alpha) : = D(x, \alpha T^{-1})$ to be the ball of radius $\alpha/T$ about $x$ in $S$. Let $\exp_{x}:T_{x}S \rightarrow S$ be the exponential mapping. Then,
\begin{equation}
  \label{eq:alphaNH}
  \D_T(x,\alpha) = \xset {\exp_{x}(v)\,:\, \xnorm{v}\leq
    \alpha T^{-1}}.
\end{equation}
% Since the exponential mapping is $C^{\infty}$, t
%Jenya: Since this is just the ball, I don't think we need to use the exp map to say that the boundary is nice.
The boundary $\partial (x, \alpha T^{-1})$ is a smooth closed curve. Consequently, the intersection of $\D_T(x, \alpha)$ with any geodesic segment will
consist of (i) finitely many geodesic crossings of
$\mathcal{D}_{T}(x,\alpha)$; (ii) up to two incomplete geodesic
crossings; and (iii) a finite number of isolated points on
$\partial \mathcal{D}_{T}(x,\alpha)$, the latter coming from
tangencies of the geodesic with the boundary. Since the set of all
unit tangent vectors tangent to the curve
$\partial \mathcal{D}_{T}(x,\alpha)$ has Liouville measure $0$,
tangent intersections will have probability zero if the initial vector
of the geodesic is chosen randomly; hence, we shall henceforth ignore
these. Furthermore, incomplete geodesic crossings will occur if and
only if the initial or terminal point of the geodesic segments lies in
the interior of $D(x, \alpha T^{-1})$; this will occur with
probability of order
$O(\textrm {area} (D(x, \alpha T^{-1}))=O(T^{-2})$, and so
can also be ignored in the $T\rightarrow \infty$ limit. Thus, with
probability $\rightarrow 1$, the intersection  consists of
finitely many geodesic crossings. Now any
geodesic crossing of $D(x, \alpha T^{-1})$ pulls back, via
the scaled exponential mapping $v \mapsto \exp_{x}(v/T)$, to a smooth
curve in the ball $B(0, \alpha)$ with endpoints on the circle
$\partial B(0, \alpha)$. When $T$ is large, such a curve will closely
approximate the \emph{chord} of the circle with the same endpoints on
$\partial B(0, \alpha)$. 

Suppose now that $v\in T^{1}S$ is a unit vector chosen randomly
according to normalized Liouville measure $\mu_{L}$. Define
$\mathcal{I}_{T}=\mathcal{I}_{T}(v; x, \alpha)$ to be the intersection
of the geodesic segment $\gamma_{[0,T]}(v)$ with the set
$D(x,\alpha T^{-1})$, and define
$\mathcal{L}_{T}= \mathcal{L}_{T}(v; x, \alpha)$ to be the finite set
of chords in $B(0,\alpha)\subset T_{x}S$ obtained by pulling back the
geodesic crossings from $\mathcal{I}_{T}$ and then replacing the
resulting curves by the corresponding chords.

\begin{theorem}\label{theorem:random-local}
  Let $S$ be a compact surface of genus $g \geq 2$, and assume that $S$
  is endowed with a $C^{\infty}$ Riemannian metric $\varrho$ of negative
  curvature.  Fix $x\in S$ and $\alpha >0$, and let $\mathcal{L}_{T}=
  \mathcal{L}_{T}(v; x,\alpha)$ be the random chord process
  corresponding to the
  intersection $\mathcal{I}_{T}=\mathcal{I}_{T}(v; x , \alpha)$ of a
  random geodesic segment (i.e., one whose initial tangent vector $v$
  is chosen randomly according to the normalized Liouville
  measure) of length $T$ with the neighborhood  $D(x;
  \alpha T^{-1})$ of $x$.  As $T \rightarrow \infty$,
  the random chord process $\mathcal{L}_{T}$ converges in
  distribution to a Poisson line process in $B (0;\alpha)$ of
  intensity
  \begin{equation}
    \label{eq:kappa-S}
    \kappa=\kappa_{S}=\frac{1}{\,\text{area} (S)}.
  \end{equation}
\end{theorem}

Because the elements of the random processes here live in somewhat
unusual spaces (finite unions of chords), we now elaborate
on the meaning of convergence in distribution. In general, we say that
a sequence of random elements of a complete metric space $\mathcal{X}$
converge in distribution if their distributions (the induced
probability measures on $\mathcal{X}$) converge weakly.  Weak
convergence is defined as follows \cite{billingsley}: if
$\mu_{n},\mu$ are Borel probability measures on a complete metric
space $\mathcal{X}$, then $\mu_{n}\rightarrow \mu$ weakly if for every
bounded, continuous function $f:\mathcal{X}\rightarrow \zz{R}$,
\begin{equation}\label{eq:weak-conv-definition}
  \lim_{n \rightarrow\infty}\int f\, d\mu_{n} =\int  f \,d\mu.
\end{equation}
In Theorem~\ref{theorem:random-local}, the appropriate metric space is 
\begin{displaymath}
  \mathcal{X}=\cup_{n=0}^{\infty}\mathcal{X}_{n} 
\end{displaymath}
  where
$\mathcal{X}_{n}$ is the set of all collections
of $n$ \emph{unordered}
pairs $y_{i}, z_{i}\in \partial B (0;\alpha)$. For any two such unordered
pairs $\xset {y,z} , \xset {y',z'}$, set
\begin{displaymath}
  d(\xset {y,z} , \xset {y',z'})=\min (d(y,y')+d(z,z'),
  d(y,z')+d(z,y')); 
\end{displaymath}
and for any two
elements $F,F'\in \mathcal{X}$, define
\begin{align*}
  d(F,F')&= \min _{\pi \in \mathcal{S}_{n}} d\left (\xset {y_{i},z_{i}},\xset
  {y'_{ \pi (i)},z'_{\pi (i)}}\right ) \quad
  \textrm {if} \; F,F' \in \mathcal{X}_{n}, \\
  &= \infty \quad \textrm {otherwise}
\end{align*}
where $\mathcal{S}_{n}$ is the group of permutations of the set $[n]$.
Henceforth, we will refer to this space $\mathcal{X}$ as
\emph{configuration space} (the dependence on the parameter
$\alpha >0$ will be suppressed).

The proof of Theorem~\ref{theorem:random-local}  will also show that
the limiting Poisson line processes in neighborhoods of distinct
points of $S$ are independent.

\begin{theorem}\label{theorem:random-local-pairs}
  Fix two distinct points $x,x'\in S$ and $\alpha >0$, and let
  $\mathcal{L}_{T}$ and $\mathcal{L}'_{T}$ be the chord processes
  induced by intersections of a random geodesic of length $T$ with the
  neighborhoods $D(x;\alpha T^{-1})$ and
  $D(x';\alpha T^{-1})$, respectively. Then as
  $T \rightarrow \infty$, the random chord processes $\mathcal{L}_{T}$
  and $\mathcal{L}'_{T}$ converge jointly in distribution to a pair of
  independent Poisson line processes in $B (0;\alpha)$, both of
  intensity $\kappa = \frac{1}{\text{area}(S)}$, as in \eqref{eq:kappa-S}.
\end{theorem}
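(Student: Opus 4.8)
The plan is to reproduce the proof of Theorem~\ref{theorem:random-local} for the pair of discs, the one new feature being that $D(x;\alpha T^{-1})$ and $D(x';\alpha T^{-1})$ are disjoint for all large $T$ and in fact remain at distance at least $\tfrac{1}{2}d(x,x')>0$. As in that proof, I would encode the intersection $A_{T}$ of the geodesic with $D(x;\alpha T^{-1})$ as a point process $\Pi^{x}_{T}$ on the strip $\mathbb{R}\times[0,\pi)$: transporting each crossing arc into the $T^{-1}$-rescaled exponential chart $B(0;\alpha)$ and approximating it by its chord, one records the point $(r,\theta)$ whose second coordinate is the direction of the chord and whose first coordinate, divided by $T$, is the signed distance of the chord from the centre; similarly $A'_{T}$ becomes $\Pi^{x'}_{T}$. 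The map carrying a locally finite configuration in the strip to the corresponding chord configuration in $B(0;\alpha)$ is continuous at every configuration in which no two chords coincide and none is tangent to $\partial B(0;\alpha)$, and by Lemma~\ref{lemma:local-sparsity} the limiting Poisson processes charge only such configurations; so it suffices to prove that $(\Pi^{x}_{T},\Pi^{x'}_{T})$ converges in distribution, as a pair of point processes on the strip, to $(\Pi^{x},\Pi^{x'})$ with $\Pi^{x}$ and $\Pi^{x'}$ \emph{independent} Poisson processes of intensity $\kappa_{g}/\pi$, each restricted to the lines meeting $B(0;\alpha)$.

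Since each of $\Pi^{x}_{T}$ and $\Pi^{x'}_{T}$ converges in distribution by Theorem~\ref{theorem:random-local}, the pair is tight on the product space and every subsequential limit already has the correct marginals; so it remains only to show that the two coordinates of the limit are independent. By the method of moments for point processes --- the limiting pair being moment-determinate --- this reduces to showing that the joint factorial moment measures converge to products of the marginal ones; concretely, that for all $j,k\ge 0$ and all bounded Borel sets $B_{1},B_{2}$,
\[
\mathbb{E}\big[\,(\Pi^{x}_{T}(B_{1}))_{(j)}\,(\Pi^{x'}_{T}(B_{2}))_{(k)}\,\big]\;\longrightarrow\;\mathbb{E}\big[\,(\Pi^{x}(B_{1}))_{(j)}\,\big]\;\mathbb{E}\big[\,(\Pi^{x'}(B_{2}))_{(k)}\,\big],
\]
where $(m)_{(j)}:=m(m-1)\cdots(m-j+1)$; the right-hand side is exactly the joint factorial moment of a pair of independent Poisson processes. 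Now this left-hand expectation is --- up to an $o(1)$ error absorbing the events, of probability $O(1/T)$, in which some counted crossing is incomplete or meets a boundary circle --- an integral over ordered $(j+k)$-tuples of crossing data ($j$ recording passages through $D(x;\alpha T^{-1})$ at times $t_{i}\in[0,T]$, the remaining $k$ recording passages through $D(x';\alpha T^{-1})$) of the normalized Liouville measure of the set of initial vectors $X\in T^{1}S$ whose geodesic realizes all the prescribed crossings --- a ``$(j+k)$-fold tube'' in $T^{1}S$. Birkhoff's theorem, exactly as in the proof of Theorem~\ref{theorem:random-local}, identifies the single-crossing version of this tube measure with the limiting intensity $\kappa_{g}/\pi$.

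The crux is to show that this $(j+k)$-fold tube measure asymptotically \emph{factorizes} as the product of the $j$ single tubes around $x$ and the $k$ single tubes around $x'$. Each prescribed crossing occupies a time window of length $O(1/T)$, and on the part of the $(t_{1},\dots,t_{j+k})$-domain that carries the moment any two such windows are separated by a fixed positive distance: for two crossings of different discs because the geodesic must travel at least $\tfrac{1}{2}d(x,x')$ between them, and for two crossings of the same disc because it must travel at least $2\,\mathrm{inj}(x)-o(1)$ --- the same separation already exploited in the proof of Theorem~\ref{theorem:random-local} in showing that $\Pi^{x}_{T}$ is asymptotically Poisson. The exponential mixing of the geodesic flow with respect to Liouville measure then lets one peel the tube apart one time window at a time, each step splitting off a single-crossing tube at the cost of an error which, after the $T^{-1}$-rescaling, sums to $o(1)$ over the configurations of windows; carrying this through yields the displayed product formula, hence $(\Pi^{x}_{T},\Pi^{x'}_{T})\Rightarrow(\Pi^{x},\Pi^{x'})$ with $\Pi^{x}$ and $\Pi^{x'}$ independent, and pushing forward under the chord-endpoint map gives the theorem. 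The main obstacle is the uniformity of this factorization as $T\to\infty$: the individual tubes have Liouville measure of order $T^{-2}$ while the number of separated windows stays bounded, so one needs the quantitative mixing rate to beat the shrinking targets --- but this is precisely the estimate already in force for Theorem~\ref{theorem:random-local}, now applied to the union of the two disjoint discs, so no essentially new ingredient is required.
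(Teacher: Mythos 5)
Your reduction to joint factorial moments and your geometric observations (the two discs stay a definite distance apart, so any geodesic must travel a bounded-below time between crossings of the two targets, and a bounded-below time between successive crossings of the same target) are fine, and the latter is morally the content of Proposition~\ref{proposition:double-hits}. The gap is in the step you call the crux: ``the exponential mixing of the geodesic flow \ldots lets one peel the tube apart one time window at a time,'' justified by the claim that ``this is precisely the estimate already in force for Theorem~\ref{theorem:random-local}.'' No such estimate is in force anywhere in the paper, and Section~\ref{ssec:strategy} explicitly explains why this route is problematic: the targets shrink at scale $T^{-1}$, so there is a different mixing problem at every scale, and quantitative mixing bounds (Ratner, Dolgopyat) control correlations of \emph{smooth} observables through their norms; indicators of tubes of Liouville measure $O(T^{-2})$ must be smoothed at scale $o(T^{-1})$, which inflates those norms polynomially in $T$. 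On the portion of your $(t_{1},\dots,t_{j+k})$-domain where two windows are separated only by a constant (e.g.\ the travel time $\tfrac12 d(x,x')$ or the injectivity radius), the exponential gain $e^{-c\,\mathrm{const}}$ is a constant and cannot beat the $T^{-2}$ tube measures, so the peeling error does not sum to $o(1)$ without a genuinely new shrinking-target estimate --- exactly the difficulty the paper attributes to the results of Dolgopyat and Maucourant being unusable ``directly.''

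The paper avoids this entirely. Theorem~\ref{theorem:random-local} is proved not by flow mixing but by symbolic dynamics: crossings are encoded as occurrences of ``magic subwords'' (Section~\ref{sec:decomposition}), the Gibbs state corresponding to Liouville measure is written via the regenerative representation (Proposition~\ref{proposition:regeneration}) as a concatenation of i.i.d.\ excursions, occurrences are localized to independent blocks (Claims~\ref{claim:no-long-blocks}--\ref{claim:no-straddles} together with Corollary~\ref{corollary:quickReturns}), and the law of small numbers (Proposition~\ref{proposition:lsn}) gives the Poisson limit. For the pairs statement, the only additional ingredient is Proposition~\ref{proposition:double-hits}: double hits of the two discs on a single crossing of the fundamental polygon have probability $o(T^{-1})$, so after deleting the (negligible) common magic subwords the two families of subword sets are disjoint, the block indicator vector is multinomial, and Proposition~\ref{proposition:mult-lsn} yields the asymptotic independence. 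So your proposal, as written, rests on an unproved quantitative mixing-versus-shrinking-target estimate and cannot be patched by citing the proof of Theorem~\ref{theorem:random-local}; to complete it you would either have to establish such an estimate from scratch or route the independence argument through the symbolic/regenerative machinery as the paper does.
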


\subsection{Heuristics}\label{ssec:heuristics}
There is an explanation for the convergence to Poisson line
processes that falls short of being a complete
proof. This heuristic argument is, in essence, the same as that used
in \cite{lalley:si1} to guess the limiting frequency of
self-intersections of a random geodesic segment. It rests on the fact
that the (normalized) Liouville measure $\mu_{L}$ on the unit tangent
bundle $T^{1}S$ is a mixing invariant measure for the geodesic flow.

Let $\tilde{\gamma} : [0,\infty) \rightarrow T^{1}S$ be a random
geodesic ray with distribution $\mu_{L}$, viewed as a (random) curve
in the unit tangent bundle $T^{1}S$, and let $\gamma$ be its
projection to the surface $S$. Since $\mu_{L}$ is invariant for the
geodesic flow, for any fixed time $t>0$ the random point $\gamma(t)$
will be uniformly distributed on $S$ (according to normalized surface
area measure), and the tangent angle ${\gamma}'(t)$ will be
uniformly distributed on $[0, 2\pi]$ (according to normalized Lebesgue
measure). Fix $\varepsilon >0$, and let $T=N\varepsilon$ be a large
integer multiple of $\varepsilon$; then the geodesic segment
$\tilde{\gamma}[0,T)$ can be partitioned into $N$ nonoverlapping
segments $\Gamma_{j}:=\tilde{\gamma}[j\varepsilon, (j+1)\varepsilon)$,
each of whose initial tangent vectors $\tilde{\gamma}(j\varepsilon )$
is uniformly distributed according to $\mu_{L}$. If $\varepsilon$ is
sufficiently small then any pair of segments $\Gamma_{j}, \Gamma_{j'}$
will intersect at most once. Moreover, as $\varepsilon \rightarrow 0$
the segments $\Gamma_{j}$ approximate straight line segments of length
$\varepsilon $ in the tangent plane at the initial point.

Now we appeal to the fact that the geodesic flow is mixing relative to
$\mu_{L}$. This implies that for any two integers $j,j'$ such that
$|j-j'|$ is large, the random vectors $\tilde{\gamma}(j \varepsilon)$ and
$\tilde{\gamma}(j' \varepsilon)$ of the random segments $\Gamma_{j}$ and
$\Gamma_{j'}$ are \emph{approximately} independent.  This suggests that the
pattern and number of self-intersections in $\gamma[0,T]$ should not
differ appreciably from those of a random sample of $N$ independent
random geodesic segments $\Gamma_{j}'$ of length $\varepsilon$, each
of whose initial tangent vectors is randomly chosen from $\mu_{L}$.

\begin{figure}[h!]
 \centering 
 \includegraphics[scale=.8]{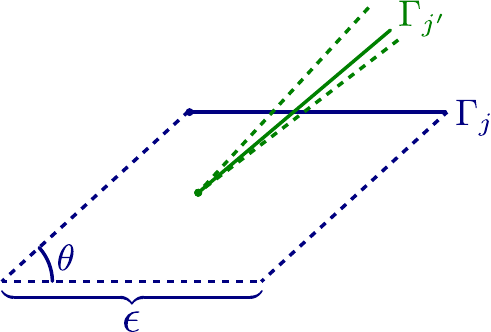}
 \caption{$\Gamma_j$ and $\Gamma_{j'}$ intersect in an angle close to $\theta$.}
 \label{fig:Rhombus}
\end{figure}

Consider, in particular, the number of self-intersections of
$\gamma[0,T)$.  For $\theta \in [0, \pi]$ and for any pair of indices
$j,j'$, the event $F_{j,j'}(\theta; d\theta)$ that the projections to
$S$ of the segments $\Gamma_{j}$ and $\Gamma_{j'}$ cross at angle
between 
$\theta- \theta$ and $\theta+ d\theta$ is, up to an error of size $O(d\theta)$, the
same as the event that (i) the point $\gamma(j' \varepsilon) $ lies in
a rhombus on $S$ whose sides meet at angle $\theta$ and whose ``top''
side is the projection to $S$ of $\Gamma_{j}$, and (ii) the tangent
angle of $\tilde{\gamma}(j' \varepsilon)$ differs from that of
$\tilde{j \varepsilon}$ by $\theta \pm d \theta$ (Figure
\ref{fig:Rhombus}). (Similarly, for $\Gamma_{j}$ and $\Gamma_{j'}$ to
cross at angle $\pi+ \theta \pm d\theta$ the ``bottom'' side of the
rhombus should be the projection of $\Gamma_{j}$.)  Since $\Gamma_{j}$
and $\Gamma_{j'}$ are approximately independent, the probability of
this event is (approximately) the relative area of this rhombus times
$2 d\theta$ divided by $2\pi$. Summing  over $\theta$ and taking $d
\theta \rightarrow 0$ now shows
that the probability of intersection is about
\begin{displaymath}
  \frac{ 2 \varepsilon^{2} \int_{0}^{\pi} \sin \theta \, d\theta }{2
    \pi \textrm {area}(S)}
  =  \frac{2 \varepsilon ^{2}\kappa}{\pi }
\end{displaymath}
 where
$\kappa=(1/ \textrm {area}(S))$. This fails, of course, if $|j-j'|$ is
small, but for most pairs $j,j'$ the difference will be
large. Consequently, by the law of large numbers, the number of
self-intersections of the segment $\gamma[0,T]$, when divided by
$T^{2}$, should satisfy
\begin{displaymath}
  \frac{1}{T^{2}} \sum_{j=1}^{N}\sum_{j'=j+1}^{N}\mathbf{1} \xset
  {\Gamma_{j'} \textrm { crosses } \Gamma_{j}} \approx
  \frac{\kappa}{\pi}.
\end{displaymath}

Amplification of this argument ``explains'' the local convergence of
the induced tessellation to the Poisson line process. Consider, for
instance, the number of distinct geodesic arcs that cross the disk
$D(x, \alpha/T)$, for some fixed point $x \in S$: we will argue that
this should have approximately a Poisson distribution.  Choose
$\varepsilon$ small, and let $T=N \varepsilon$ be an integer multiple
of $\varepsilon $ large enough that $1/T \ll \varepsilon$. For each
$j\leq N$, the probability that (the projection of) $\Gamma_{j}$
crosses the disk $D(x, \alpha T^{-1})$ is, for small $\varepsilon$,
about $C\varepsilon \alpha /T=C \alpha /N$ for a suitable geometric
constant $C>0$. (This follows by a simple geometric argument similar
to that given above for self-intersections.) Thus, if the random
segments $\Gamma_{j}$ were actually independent, the number that would
cross the disk $D(x, \alpha /T)$ would be the sum of $N$ independent
Bernoulli random variables each with mean $C \alpha /N$. For $N$
large, the distribution of this count would therefore converge to
Poisson with mean $C \alpha$ (cf. Proposition~\ref{proposition:lsn}
below).

The sticky point, of course, is that the random segments $\Gamma_{j}$
are not independent. What is worse, the events of interest (for
instance, the event that $\Gamma_{j}$ crosses the disk
$D(x,\alpha T^{-1})$) are events whose probabilities become small as
$N$ becomes large; thus, the mixing property of the geodesic flow does
not by itself imply that
\begin{displaymath}
  \frac{P (\xset {\Gamma_{j} \textrm { crosses }
      D(x,\alpha T^{-1})}\cap \xset {\Gamma_{j'} \textrm { crosses }
      D(x,\alpha T^{-1})})}{P \xset {\Gamma_{j} \textrm { crosses }
      D(x,\alpha T^{-1})}P \xset {\Gamma_{j'} \textrm { crosses }
      D(x,\alpha T^{-1})}} \approx 1
\end{displaymath}
even for $|j-j'| $ large. The rigorous arguments to be given below are
largely designed to circumvent the failure of mixing at this level by
exploiting the Gibbsean structure of the Liouville measure.

Mixing problems in which the events of interest have probabilities
tending to zero are known as ``shrinking target'' problems.  Such
problems occur naturally in hyperbolic dynamics: see, for instance,
~\cite{Sullivan}, where the ``target'' is one of the cusps of a
non-compact hyperbolic surface of finite area, or
Kleinbock-Margulis~\cite{KM}, who consider related problems for
diagonal flows on finite-volume homogeneous spaces.  For shrinking
target problems where the targets lie in the compact part of the
space, see Dolgopyat~\cite{Dolgopyat2}, and
Maucourant~\cite{Maucourant}. Unfortunately none of these results is
easily adapted to the problems we consider here.

\subsection{Global Statistics}\label{ssec:global}

Theorems \ref{theorem:random-local}--\ref{theorem:random-local-pairs}
describe the ``local'' structure of the random tessellation
$\mathcal{T}_{T}$ of the surface $S$ induced by a long segment
$\gamma[0,T]$ of a random geodesic. The tessellation $\mathcal{T}_{T}$
will consist of geodesic polygons, typically of diameter of order $T^{-1}$,
since the $O (T^{2})$ self-intersections will subdivide the length $T$
geodesic segment into sub-segments of length $O (T^{-1})$. Thus, it is
natural to look at the statistics of the scaled tessellation
$T\mathcal{T}_{T}$, which we  view as consisting of a random
number of triangles, quadrilaterals, etc., each with its own set of
side-lengths and interior angles.

The empirical frequencies of triangles,
quadrilaterals, etc. and the empirical distribution of side-length and
interior-angle sets in a Poisson line process of intensity $\kappa$
on the ball $B (0; \alpha)$ of radius $\alpha$ converge as $\alpha
\rightarrow \infty$.  (These results are evidently due to R. E. Miles~\cite{miles:pnas1}, \cite{miles:pnas2};
proofs are given in section~\ref{sec:preliminaries} below.)
Theorem~\ref{theorem:random-local}  asserts that  when $L$ is large,
then for any point $x\in S$ the statistics of the polygonal partition
in $B (x;\alpha^{-1}L)$ induced by a random geodesic segment of length
$L$ should approach those of a Poisson line process. From this
observation we will deduce the following assertion regarding
\emph{global} statistics.

\begin{theorem}\label{theorem:global-stats}
  Let $\mathcal{T}_{T}$ be the tessellation of $S$ induced by a random
  geodesic of length $T$. Then with probability approaching $1$ as
  $T \rightarrow \infty$, the empirical frequencies of triangles,
  quadrilaterals, etc. and the empirical distribution of side-length
  and interior-angle sets in $\mathcal{T}_{T}$ approach the
  corresponding theoretical frequencies for a Poisson line process.
\end{theorem}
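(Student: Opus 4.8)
The plan is to establish a \emph{double limit}. For a fixed window size $\alpha$ I would first let $L\to\infty$, comparing the global face--statistics of $L\mathcal{T}_{L}$ with those of a Poisson line process of intensity $\kappa_{g}$ on $B(0;\alpha)$, and then let $\alpha\to\infty$ and quote the results of section~\ref{sec:preliminaries} (due to Miles), which say that the empirical face--statistics of a Poisson line process of intensity $\kappa_{g}$ on $B(0;\alpha)$ converge, as $\alpha\to\infty$, to the theoretical whole--plane statistics. It suffices to treat a single bounded functional $\phi$ of the shape of a geodesic polygon -- the indicator that it is a triangle, a bounded function of its multiset of side lengths and interior angles, and so on -- and to show that the empirical average $\Phi_{L}$ of $\phi$ over the faces of $L\mathcal{T}_{L}$ converges in probability, as $L\to\infty$, to the theoretical Poisson average $\bar{\phi}$. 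Fix $\varepsilon>0$. Using section~\ref{sec:preliminaries}, choose $\alpha$ so large that for the Poisson line process of intensity $\kappa_{g}$ the expected $\phi$--average over the faces interior to a region squeezed between concentric balls of radii $\asymp\alpha$ is within $\varepsilon$ of $\bar{\phi}$, and the expected fraction of faces of the process meeting such a region's boundary is $O(1/\alpha)$. Then partition $S$ into $N=N(L)\asymp(L/\alpha)^{2}$ disjoint measurable cells $C_{1},\dots,C_{N}$, each contained in a metric ball of radius $c\alpha/L$ and containing one of radius $c'\alpha/L$ about a point $x_{i}$, with the $x_{i}$ forming an $\asymp\alpha/L$--separated net; discard the at most two cells meeting an endpoint of the geodesic.

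Call a face of $\mathcal{T}_{L}$ \emph{interior to} $C_{i}$ if it lies inside $C_{i}$ and \emph{straddling} otherwise, and set $Y_{i}=\#\{\text{faces interior to }C_{i}\}$ and $Z_{i}=\sum_{F\ \text{interior to}\ C_{i}}\phi(F)$. Applying Theorem~\ref{theorem:random-local} at $x_{i}$ with window parameter a fixed multiple of $\alpha$, and restricting the limiting process to the rescaled copy of $C_{i}$, shows that $(Y_{i},Z_{i})$ converges in distribution as $L\to\infty$ to the analogous pair $(Y^{\star}_{i},Z^{\star}_{i})$ for the Poisson line process on $B(0;\alpha)$; this convergence and the limit means $\mathbb{E}[Y^{\star}_{i}],\mathbb{E}[Z^{\star}_{i}]$ are uniform in $i$ by compactness of $S$, and the choice of $\alpha$ forces $\mathbb{E}[Z^{\star}_{i}]/\mathbb{E}[Y^{\star}_{i}]$ within $\varepsilon$ of $\bar{\phi}$ for every $i$. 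Truncating $Y_{i},Z_{i}$ at $O(\alpha^{2})$ is harmless: by equidistribution of the geodesic together with a moment bound for the number of self--intersections in a ball of radius $\asymp\alpha/L$, every retained cell has $O(\alpha^{2})$ faces with probability $\to1$, and with the truncation the first two moments of $(Y_{i},Z_{i})$ converge to those of $(Y^{\star}_{i},Z^{\star}_{i})$. Finally the number of straddling faces at one cell is $O(\alpha)$ in expectation -- it is at most the number of arcs of the intersection pattern crossing a slight thickening of $\partial C_{i}$, which is $O(\alpha)$ by the Birkhoff/Liouville count underlying Theorem~\ref{theorem:random-local} -- so there are $O(L^{2}/\alpha)$ straddling faces in all, at most an $\varepsilon$ fraction of the $\asymp L^{2}$ faces of $\mathcal{T}_{L}$ once $\alpha$ is large.

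It remains to sum over cells. Put $\mathcal{Z}=\sum_{i}Z_{i}$ and $\mathcal{Y}=\sum_{i}Y_{i}$, so that by the uniform cell estimates above $\mathbb{E}[\mathcal{Z}]/\mathbb{E}[\mathcal{Y}]$ is within $O(\varepsilon)$ of $\bar{\phi}$ for all large $L$ and $\mathbb{E}[\mathcal{Y}]\asymp N\alpha^{2}$. For concentration I would bound $\var(\mathcal{Z})\le\sum_{i}\var(Z_{i})+\sum_{i\ne j}\cov(Z_{i},Z_{j})$: the diagonal is $O(N\alpha^{4})=o(N^{2}\alpha^{4})$, and for the cross terms I would appeal to Theorem~\ref{theorem:random-local-pairs} -- more precisely to its proof, which exhibits the local pictures at two distinct points as asymptotically independent with an error controlled solely by their separation being at least a fixed multiple of $\alpha/L$ -- giving $|\cov(Z_{i},Z_{j})|\le\rho(L)\to0$ uniformly over all pairs of cells more than one cell--diameter apart, so $\sum_{i\ne j}|\cov(Z_{i},Z_{j})|=O(N^{2}\rho(L))=o(N^{2}\alpha^{4})$; the same holds for $\mathcal{Y}$. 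Chebyshev's inequality then shows $\mathbb{P}\big(|\mathcal{Z}/\mathcal{Y}-\bar{\phi}|>C\varepsilon\big)\to0$ for an absolute constant $C$, and since $\Phi_{L}$ differs from $\mathcal{Z}/\mathcal{Y}$ by at most $\|\phi\|_{\infty}$ times the straddling fraction -- itself $O(\varepsilon)$ with probability $\to1$ -- we get $\mathbb{P}\big(|\Phi_{L}-\bar{\phi}|>C'\varepsilon\big)\to0$; letting $\varepsilon\downarrow0$ finishes the proof, and applying this to the relevant family of functionals $\phi$ covers triangles, quadrilaterals, etc. and the side--length and interior--angle distributions. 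The main obstacle is the cross--term estimate: one must extract from the proof of Theorem~\ref{theorem:random-local-pairs} a decorrelation bound \emph{uniform} over the $\asymp N^{2}$ pairs of cells, whose separations shrink like $\alpha/L$, rather than using that theorem as a black box; a secondary technical point, also resting on the equidistribution estimates behind Theorems~\ref{theorem:random-local}--\ref{theorem:random-local-pairs}, is the uniform-in-$i$ control of cell face--counts and of anomalously small faces near cell boundaries.
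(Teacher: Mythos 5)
Your two-scale framework (Poisson approximation in windows of size $\alpha/L$ via Theorem \ref{theorem:random-local}, followed by $\alpha\to\infty$ via the Miles-type results of section \ref{sec:preliminaries}) is the same as the paper's, but your concentration step has a genuine gap. You sum over $\asymp(L/\alpha)^{2}$ cells and control $\var(\mathcal{Z})$ by a covariance bound $|\cov(Z_{i},Z_{j})|\le\rho(L)\to0$ \emph{uniform} over all pairs of cells more than one cell-diameter apart, citing the proof of Theorem \ref{theorem:random-local-pairs}. That theorem, and every estimate in its proof, concerns two \emph{fixed} points $x\neq x'$: in Proposition \ref{proposition:double-hits} the constant $C=C(x,x',\alpha)$ controlling the visibility angle degenerates as $\mathrm{dist}(x,x')\downarrow 0$, and the magic-subword construction is carried out for a fixed pair of targets. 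For cells whose separation is a bounded multiple of $\alpha/L$ the double-hit probability is of order one, and for separations shrinking with $L$ you would need a quantitative decorrelation estimate at scale-dependent distances, uniform over the $\asymp N^{2}$ pairs --- exactly the kind of shrinking-target mixing estimate that section \ref{sec:heuristics} explains is not available and that the symbolic-dynamics strategy is designed to circumvent. You flag this as ``the main obstacle'' but do not resolve it, and nothing in the paper supplies it. A secondary unproved ingredient is the truncation of $(Y_{i},Z_{i})$ at $O(\alpha^{2})$: deterministically a single cell can contain up to order $L^{2}$ faces, so passing from convergence in distribution to convergence of first and second moments requires a uniform-in-$x$ second-moment bound on local crossing counts, which is again a pair-correlation estimate not established in the paper.

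The paper avoids second moments entirely, and you could too. Each cell is ``good'' (its rescaled vertex/face counts and $k$-gon fractions $\epsilon$-close to the Poisson values, by Theorem \ref{theorem:random-local} combined with Corollary \ref{corollary:poly-freqs}) with probability $\ge 1-\delta$ uniformly in the cell, so Markov's inequality applied to the \emph{number} of bad cells already gives that with probability $\ge 1-2\epsilon$ all but an $\epsilon$-fraction of cells are good; no independence or covariance decay enters. The contribution of bad cells and boundary effects is then controlled not by estimating anything inside them but by the global face-count statements of Proposition \ref{proposition:siCounts}: the almost-sure asymptotics \eqref{eq:faces} (from Birkhoff's theorem, Lemma \ref{lemma:noTriplePoints} and Euler's formula) together with the deterministic bound \eqref{eq:v+e+f-bound} show that the bad cells carry at most an $O(\epsilon)$ fraction of the $\asymp L^{2}$ faces, so however skewed their statistics they perturb the empirical frequencies by $O(\epsilon)$ only. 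Replacing your Chebyshev step by this first-moment-plus-deterministic-bound argument closes the gap; as written, the uniform decorrelation bound you rely on is missing.
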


For example, for each $v \in T^1(S)$, let $\mathcal T_T(v)$ be the tessellation induced by the length $T$ arc with initial direction $v$. Let $f_3$ be the function that returns the frequency of triangles in a tessellation. Suppose the expected value of $f_3$ is $\tau_3$ for a Poisson line process. Then we show that for any $\epsilon > 0$,
\[
 \lim_{T \to \infty} \mu_L \{v \in T^1(S) \,:\, |f_3(\mathcal T_T(v)) - \tau_3| > \epsilon\} = 0
\]
where $\mu_L$ is the Liouville measure on $T^1(S)$.

\textbf{Plan of the paper.} The proofs of
Theorems~\ref{theorem:random-local}--\ref{theorem:random-local-pairs}
will occupy most of the paper. The strategy will be to reduce the
problem to a corresponding counting problem in symbolic
dynamics. Preliminaries on Poisson line processes will be collected in
section~\ref{sec:preliminaries}, and preliminaries on symbolic
dynamics for the geodesic flow in section~\ref{sec:symbolicDynamics}.
Section~\ref{sec:heuristics} will be devoted to heuristics and a
reformulation of the problem; the proofs of
Theorems~\ref{theorem:random-local}--\ref{theorem:random-local-pairs}
will then be carried out in sections~
% \ref{sec:asymptotics},
% \ref{sec:decomposition}, 
\ref{sec:no-prefixes} - \ref{sec:final-step}. 
Theorem~\ref{theorem:global-stats} will be proved in
section~\ref{sec:global-stats}. Finally, in section~\ref{sec:ext}, we
give a short list of conjectures, questions, and possible 
extensions of our main results.

\section{Preliminaries: Poisson line
processes}\label{sec:preliminaries}

The Poisson line process and its generalizations have a voluminous
literature, with notable early contributions by
Miles~\cite{miles:pnas1}, \cite{miles:pnas2}. See \cite{kendall-et-al}
for an extended discussion and further pointers to the literature. In
this section we will record some basic facts about these
processes. These are mostly known -- some of them are stated as
theorems in \cite{miles:pnas1} without proofs -- but proofs are not easy to
track down, so we shall provide proof sketches in
Appendix~\ref{sec:appendix}.

\subsection{Statistics of a Poisson line process}\label{ssec:plp}

\begin{lemma}\label{lemma:translation-invariance}
A Poisson line process of constant intensity $\kappa$ is both
rotationally and translationally invariant, that is, if $A$ is any
isometry of $\zz{R}^{2}$ then the configuration $\{AL_{n} \}_{n\in
\zz{Z}}$ has the same joint distribution as the configuration $\{L_{n}
\}_{n\in \zz{Z}}$. 
\end{lemma}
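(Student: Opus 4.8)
The plan is to push the whole problem down to the parameter strip $\zz{R}\times[0,\pi)$ of Definition~\ref{definition:plp} and invoke the mapping theorem for Poisson point processes. By \eqref{eq:point-to-line-mapping}, the line process $\mathcal{L}=\{L_n\}$ is a deterministic, bijective function of the Poisson point process $N=\{(R_n,\Theta_n)\}$ on $\zz{R}\times[0,\pi)$, and \eqref{eq:point-to-line-mapping} is itself a bijection between that strip and the space of all affine lines in $\zz{R}^2$. So, given an isometry $A$ of $\zz{R}^2$, let $\Phi_A\colon \zz{R}\times[0,\pi)\to\zz{R}\times[0,\pi)$ be the bijection obtained by conjugating the map $\ell\mapsto A(\ell)$ on lines through \eqref{eq:point-to-line-mapping}. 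Then $\{AL_n\}$ is precisely the line process associated to the point process $\Phi_A(N)$. By the mapping theorem, $\Phi_A(N)$ is again a Poisson point process, with mean measure the pushforward of $(\lambda/\pi)\,dr\,d\theta$ under $\Phi_A$. Hence it suffices to show that $\Phi_A$ preserves Lebesgue measure $dr\,d\theta$ on the strip; this is exactly the classical fact that $dr\,d\theta$ is (a scalar multiple of) the isometry-invariant ``kinematic'' measure on the space of affine lines.

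First I would compute $\Phi_A$ explicitly. Write $A(z)=Uz+v$ with $U\in O(2)$, $v\in\zz{R}^2$, and set $n_\theta=(\cos\theta,\sin\theta)$, so that the line with coordinates $(r,\theta)$ is $\{z:\langle z,n_\theta\rangle=r\}$. Since $U$ is orthogonal, its image under $A$ is $\{w:\langle w,Un_\theta\rangle = r+\langle v,Un_\theta\rangle\}$. Now $Un_\theta$ is a unit vector, so $Un_\theta=\varepsilon(\theta)\,n_{\theta'}$ for a unique $\theta'=\theta'(\theta)\in[0,\pi)$ and a sign $\varepsilon(\theta)\in\{\pm1\}$, and one reads off $\Phi_A(r,\theta)=\bigl(\varepsilon(\theta)\,r+c(\theta),\,\theta'(\theta)\bigr)$ where $c(\theta)=\langle v,n_{\theta'(\theta)}\rangle$. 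Under the identification $[0,\pi)\cong\zz{R}/\pi\zz{Z}$ of directions, $\theta\mapsto\theta'(\theta)$ is the map induced by $U$ — a rotation if $\det U=1$, a reflection if $\det U=-1$ — so it preserves $d\theta$; and $\varepsilon$ is piecewise constant on $[0,\pi)$. A one-line Fubini argument then gives measure invariance: integrate a test function first in $r$ for fixed $\theta$, where $r\mapsto\varepsilon(\theta)\,r+c(\theta)$ is an affine substitution with Jacobian of absolute value $1$, and then in $\theta$, where $\theta\mapsto\theta'(\theta)$ is measure preserving. Thus $\Phi_A$ preserves $dr\,d\theta$, hence $(\lambda/\pi)\,dr\,d\theta$, and the lemma follows. (One can equally well verify this for translations, rotations, and a single reflection separately, since those generate the isometry group.)

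The only delicate point — and it is bookkeeping rather than a real obstacle — is the mod-$\pi$ normalization hidden in $\varepsilon(\theta)$: as $\theta$ sweeps $[0,\pi)$ the direction $Un_\theta$ eventually wraps past the reference direction $\theta'=0$, and to keep $\theta'\in[0,\pi)$ one must replace $(r,\theta')$ by $(-r,\theta')$ there; this is the source of the $\pm1$. Since the flip $r\mapsto-r$ is itself measure preserving and $\varepsilon$ need only be measurable for the Fubini step, this causes no real difficulty — one simply has to state the change of variables carefully enough to tolerate a piecewise-defined sign. Everything else (the orthogonality computation, and the claim that $\theta\mapsto\theta'$ is an isometry of $\zz{R}/\pi\zz{Z}$) is routine.
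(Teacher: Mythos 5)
Your proof is correct, but it takes a different route from the paper's. You work entirely in the parameter strip $\zz{R}\times[0,\pi)$: you compute the map $\Phi_A$ induced on $(r,\theta)$-coordinates by an isometry $A(z)=Uz+v$, check via the affine substitution in $r$ (Jacobian $\pm1$) and the measure-preserving map $\theta\mapsto\theta'$ on $\zz{R}/\pi\zz{Z}$ that $\Phi_A$ preserves $dr\,d\theta$, and then invoke the mapping theorem for Poisson point processes to conclude that $\Phi_A(N)$ is again Poisson with intensity $(\lambda/\pi)\,dr\,d\theta$. This is the classical ``kinematic measure'' argument, and it handles rotations, reflections, and translations in one stroke; its only cost is the careful bookkeeping of the sign $\varepsilon(\theta)$ forced by the mod-$\pi$ normalization, which you address adequately (measurability of $\varepsilon$ is all the Fubini step needs). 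The paper instead dispatches rotational invariance directly from the uniformity of the $\Theta_n$, and proves translation invariance along the $x$-axis by discretizing the angles to a finite set $\mathcal{A}_m$, using the thinning and superposition properties to write the discretized process $\mathcal{L}_m$ as a superposition of $m$ independent fixed-angle line processes, each of which is translation-invariant because a constant-intensity Poisson point process on $\zz{R}$ is, and then letting $m\to\infty$. The paper's route avoids any change-of-variables computation and the mapping theorem, using only the most elementary Poisson facts, but it treats rotations and translations separately and needs a (lightly justified) limiting step $\mathcal{L}_m\to\mathcal{L}$; your route is more uniform and self-contained modulo quoting the mapping theorem, which is standard (e.g., in Kingman's book already cited by the paper).
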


\begin{remark}\label{remark:ti}
This result is stated without proof in \cite{miles:pnas1}. A proof of
the corresponding fact for the intensity measure can be found in
\cite{santalo}, and another in \cite{kendall-et-al}, ch.~8. A short,
elementary proof is given in Appendix~\ref{sec:appendix}. The
following corollary, which is stated without proof as Theorem~2 in
\cite{miles:pnas1}, follows easily from isometry-invariance. 
\end{remark}

\begin{corollary}\label{corollary:line-intensity}
Let $\mathcal{L}$ be a Poisson line process of intensity $\kappa
>0$. For any fixed line  $\ell$ in $\mathbb{R}^{2}$, the point
process of intersections of $\ell$ with lines in $\mathcal{L}$ is a
Poisson point process of intensity $2\kappa /\pi$.
\end{corollary}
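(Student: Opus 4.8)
The plan is to use the isometry invariance from Lemma~\ref{lemma:translation-invariance} to reduce to the case in which $\ell$ is the $x$-axis, and then to realize the intersection point process as the pushforward of the driving Poisson process on the strip under an explicit map, so that the statement follows from the Poisson mapping theorem together with a single Jacobian computation. First I would pick an isometry $A$ of $\mathbb{R}^{2}$ carrying $\ell$ to $\ell_{0}:=\{(x,0):x\in\mathbb{R}\}$; by Lemma~\ref{lemma:translation-invariance} the configuration $\{AL_{n}\}$ is again a Poisson line process of intensity $\lambda$, and $A$ carries the intersection points of $\ell$ with $\{L_{n}\}$ to those of $\ell_{0}$ with $\{AL_{n}\}$, so it suffices to treat $\ell=\ell_{0}$.

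Next I would identify the intersections with $\ell_{0}$ explicitly. A line $L_{n}$ with parameters $(R_{n},\Theta_{n})$ meets $\ell_{0}$ exactly where $R_{n}=x\cos\Theta_{n}$; if $\Theta_{n}\neq\pi/2$ this has the unique solution $x=R_{n}/\cos\Theta_{n}$, while the event $\{\Theta_{n}=\pi/2\}$ (a line parallel to $\ell_{0}$) occurs with probability zero, since $\{\pi/2\}$ is Lebesgue-null in the $\theta$-coordinate. Hence, almost surely, the set of intersection points of $\ell_{0}$ with $\mathcal{L}$ is the image of the Poisson point process $\{(R_{n},\Theta_{n})\}$ on $\mathbb{R}\times[0,\pi)$ of intensity $\tfrac{\lambda}{\pi}\,dr\,d\theta$ under the map
\[
	\Phi(r,\theta)=\frac{r}{\cos\theta},\qquad (r,\theta)\in\mathbb{R}\times\bigl([0,\pi)\setminus\{\pi/2\}\bigr),
\]
where we identify a point of $\ell_{0}$ with a real number. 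By the mapping theorem for Poisson processes, $\Phi_{*}\{(R_{n},\Theta_{n})\}$ is a Poisson point process on $\ell_{0}$ with intensity measure $\Phi_{*}\bigl(\tfrac{\lambda}{\pi}\,dr\,d\theta\bigr)$, provided the latter is locally finite. For fixed $\theta\neq\pi/2$ the section $r\mapsto r/\cos\theta$ is linear with $dr=|\cos\theta|\,ds$, so integrating out $\theta$,
\[
	\Phi_{*}\Bigl(\tfrac{\lambda}{\pi}\,dr\,d\theta\Bigr)=\frac{\lambda}{\pi}\left(\int_{0}^{\pi}|\cos\theta|\,d\theta\right)ds=\frac{\lambda}{\pi}\cdot 2\cdot ds=\frac{2\lambda}{\pi}\,ds,
\]
which is $\tfrac{2\lambda}{\pi}$ times Lebesgue measure: locally finite and non-atomic (so the image process is a.s.\ simple). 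This is precisely the claim. Equivalently, bypassing the mapping theorem, one notes that for disjoint intervals $I_{1},\dots,I_{k}\subset\ell_{0}$ the preimages $\Phi^{-1}(I_{j})$ are disjoint Borel subsets of the strip with $\tfrac{\lambda}{\pi}\,dr\,d\theta$-measure $\tfrac{2\lambda}{\pi}|I_{j}|$, so the counts of intersection points in the $I_{j}$ are independent Poisson variables with those means, which characterizes a Poisson point process of intensity $2\lambda/\pi$.

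The only step that requires genuine care — the ``main obstacle'' — is the local finiteness of the pushforward intensity. The map $\Phi$ blows up as $\theta\to\pi/2$, since lines of $\mathcal{L}$ nearly parallel to $\ell_{0}$ meet it arbitrarily far out, so a priori the intersection points could accumulate. The Jacobian factor $|\cos\theta|$ vanishes at $\theta=\pi/2$ at exactly the rate needed to keep $\int_{0}^{\pi}|\cos\theta|\,d\theta$ finite, which is why only finitely many lines of $\mathcal{L}$ cross any bounded segment of $\ell_{0}$. The reduction to the $x$-axis and the disposal of the parallel-line null event are routine.
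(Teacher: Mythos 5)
Your argument is correct, but it takes a genuinely different route from the paper's. The paper recycles the discretization device from its proof of Lemma~\ref{lemma:translation-invariance}: it approximates $\mathcal{L}$ by the processes $\mathcal{L}_{m}$ in which the angle marks take only $m$ values, notes that each fixed-angle constituent $\mathcal{L}^{k}_{m}$ meets the $x$-axis in a one-dimensional Poisson point process (an elementary linear change of variable producing the factor $|\cos\theta_{k}|$, up to the paper's angle convention), uses superposition of the $m$ independent pieces to get a Poisson process whose intensity is the Riemann sum $m^{-1}\sum_{k}\lambda|\cos\theta_{k}|$, and then lets $m\to\infty$ so that this sum converges to $(\lambda/\pi)\int_{0}^{\pi}|\cos\theta|\,d\theta=2\lambda/\pi$. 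You instead push the driving process on the strip forward under $\Phi(r,\theta)=r/\cos\theta$ and invoke the mapping theorem for Poisson processes, computing the image intensity exactly; the parallel-line null event and the local finiteness of the pushforward are handled explicitly, and your fallback argument via independent Poisson counts of the disjoint preimages $\Phi^{-1}(I_{j})$ even dispenses with the mapping theorem. What your route buys: it is exact, requires no approximation or passage to the limit (the paper's ``summing over $k$ and letting $m\to\infty$'' is left rather breezy, as one must also justify convergence of the intersection point processes), and isolates the one genuinely delicate point, namely that the blow-up of $\Phi$ near $\theta=\pi/2$ is exactly compensated by the Jacobian factor $|\cos\theta|$. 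What the paper's route buys: it stays entirely within the elementary thinning/superposition toolkit already set up for Lemma~\ref{lemma:translation-invariance} and reused in the proof of Proposition~\ref{proposition:characterization}, so no external Poisson mapping theorem needs to be cited. Both computations land on the same intensity $2\lambda/\pi$, and your reduction of a general line $\ell$ to the $x$-axis via the full isometry invariance of Lemma~\ref{lemma:translation-invariance} is sound.
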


\begin{lemma}\label{lemma:local-sparsity}
  Let $\mathcal{L}$ be a Poisson line process of intensity
  $\kappa >0$, and for each point $x\in\mathbb{R}^{2}$ and each real
  $r>0$ let $N(B(x;r))$ be the number of lines in $\mathcal{L}$ that
  intersect the ball $B(x;r)$ of radius $r$ centered at $x$. Then the
  random variable $N(B(x;r))$ has the Poisson distribution with mean
  $2\kappa r$. Consequently, with probability one, for any compact
  set $K\subseteq \zz{R}^{2}$ the set of lines $L_{n}$ in
  $\mathcal{L}$ that intersect $K$ is finite.
\end{lemma}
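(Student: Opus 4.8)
The plan is to read off the law of $N(B(x;r))$ directly from the defining Poisson point process $\{(R_{n},\Theta_{n})\}_{n\in\zz{Z}}$ on the strip $\zz{R}\times[0,\pi)$. The basic observation is that the line $L_{n}$ of \eqref{eq:point-to-line-mapping} is precisely the line perpendicular to the unit vector $(\cos\Theta_{n},\sin\Theta_{n})$ lying at signed distance $R_{n}$ from the origin along that direction; more generally, for a point $x=(x_{1},x_{2})\in\zz{R}^{2}$ the Euclidean distance from $x$ to $L_{n}$ equals $|R_{n}-x_{1}\cos\Theta_{n}-x_{2}\sin\Theta_{n}|$. Hence $L_{n}$ meets $B(x;r)$ if and only if this quantity is at most $r$. (By Lemma~\ref{lemma:translation-invariance} one may, if preferred, first reduce to the case $x=0$, in which case the condition becomes simply $|R_{n}|\le r$.)

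From here the first assertion is immediate. For fixed $x$, the set $\{(\rho,\theta)\in\zz{R}\times[0,\pi):|\rho-x_{1}\cos\theta-x_{2}\sin\theta|\le r\}$ has, for each fixed $\theta$, a $\rho$-section that is an interval of length $2r$; integrating over $\theta\in[0,\pi)$ shows this set has Lebesgue measure $2\pi r$. Since $\{(R_{n},\Theta_{n})\}$ is Poisson of intensity $\lambda/\pi$ with respect to Lebesgue measure, the number of its points in any Borel set $A$ is Poisson with mean $(\lambda/\pi)\,|A|$; applying this to the set above gives that $N(B(x;r))$ is Poisson with mean $(\lambda/\pi)(2\pi r)=2\lambda r$, as claimed.

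For the final statement, fix the exhaustion of $\zz{R}^{2}$ by the balls $B(0;k)$, $k\in\N$. By the first part each $N(B(0;k))$ is Poisson with finite mean, hence finite almost surely, so the event $E=\bigcap_{k\in\N}\{N(B(0;k))<\infty\}$ has probability one; and on $E$ every compact $K\subseteq\zz{R}^{2}$, being contained in some $B(0;k)$, is met by at most finitely many lines of $\mathcal{L}$. I do not anticipate a serious obstacle here: the only step requiring any care is the geometric identification of $\{L_{n}\cap B(x;r)\neq\emptyset\}$ with the explicit region in $(\rho,\theta)$-coordinates and the verification that this region has $\theta$-sections of length $2r$ and hence total Lebesgue measure $2\pi r$; everything else is the standard fact that a Poisson point process restricted to a set of finite measure has a Poisson, and in particular almost surely finite, total count.
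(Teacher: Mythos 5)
Your proof is correct and follows essentially the same route as the paper: identify the event $\{L_{n}\cap B(x;r)\neq\emptyset\}$ with a region of the parameter strip and read the Poisson law off the defining point process. The only cosmetic differences are that you treat a general center $x$ directly via the perpendicular-distance formula, where the paper reduces to $x=0$ and uses that $\{R_{n}\}$ is a rate-$\lambda$ Poisson process on $\mathbb{R}$, and that you spell out the countable-exhaustion step for almost-sure finiteness, which the paper leaves implicit.
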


\begin{proof}
Without loss of generality, take $K=B(0;R)$ to be the closed ball of radius
$R$ centered at the origin. Then the line $L_{n}$ intersects $K$ if
and only if $|R_{n}|\leq R$. Since a Poisson point process on $\zz{R}$
of constant intensity has at most finitely many points in any finite
interval, the result follows.
\end{proof}

The next result {characterizes} the
Poisson line process (see also
Proposition~\ref{proposition:sufficientCondition} below). Fix a
bounded, {convex} region $D\subset \mathbb{R}^{2}$ with $C^{\infty}$
boundary $\Gamma =\partial D$, and let $A,B$ be
non-intersecting closed arcs on $\Gamma$. For any line process
$\mathcal{L}$, let
\begin{equation}\label{eq:crossingCounts}
	 N_{\{A,B \}}= \# \,\text{lines that cross both}\; A \;\text{and}\;B.
\end{equation}
For any angle $\theta \in [-\pi /2,\pi /2]$, the set of lines that
intersect both $A$ and $B$ and meet the $x-$axis at angle
$\theta +\pi /2$ constitute an infinite strip that intersects the line
$\{re^{i\theta} \}_{r \in \zz{R}}$ in an interval; see
Figure~\ref{fig:Strip at angle theta} below.  Let
$\psi (\theta)=\psi_{A,B} (\theta)$ be the length of this interval,
and define
\begin{equation}\label{eq:beta}
	\beta_{A,B}=\frac{1}{\pi} \int_{-\pi /2}^{\pi /2}\psi
	(\theta)\,d\theta.
\end{equation}

\begin{figure}[h!]
 \centering
 \includegraphics[scale=1]{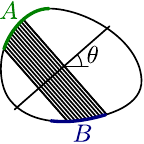}
 \caption{Lines that cross $A$ and $B$ at angle $\theta$.}
 \label{fig:Strip at angle theta}
\end{figure}

\begin{proposition}\label{proposition:characterization}
  A line process  $\mathcal{L}$  in $D$ is  a Poisson line process of rate
  $\kappa >0$ if and only if
  \begin{enumerate}
  \item [(i)]  for any two non-intersecting arcs $A,B \subset \Gamma$,
    the random variable $N_{\xset {A,B}}$ has the Poisson distribution
    with mean $\kappa \beta_{A,B}$, and
  \item [(ii)] for any finite collection  $\{A_{i},B_{i} \}_{i\leq
      m}$  of pairwise disjoint boundary arcs, the random
    variables $N_{\{A_{i},B_{i}\}}$ are mutually independent. 
  \end{enumerate}
\end{proposition}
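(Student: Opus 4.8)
The plan is to translate the statement, via the point-to-line bijection \eqref{eq:point-to-line-mapping}, into a fact about a planar Poisson point process, and then invoke the standard characterization of Poisson point processes by their finite-dimensional distributions. Write $L_{r,\theta}$ for the line \eqref{eq:point-to-line-mapping} with parameters $(r,\theta)$, and set
\[
  \mathcal{D}:=\{(r,\theta)\in\zz{R}\times[0,\pi):L_{r,\theta}\cap D\neq\emptyset\},
\]
a bounded region since $D$ is bounded. A line process $\mathcal{L}$ in $D$ pulls back to a point process $\Pi$ on $\mathcal{D}$, and for disjoint closed arcs $A,B\subset\Gamma$ the count $N_{\{A,B\}}$ of \eqref{eq:crossingCounts} equals $\Pi(R_{A,B})$, where $R_{A,B}:=\{(r,\theta)\in\mathcal{D}:L_{r,\theta}\text{ crosses both }A\text{ and }B\}$. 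Under this dictionary, ``$\mathcal{L}$ is a Poisson line process of rate $\lambda$'' reads ``$\Pi$ is a Poisson point process of intensity $(\lambda/\pi)\,dr\,d\theta$,'' and the two halves of the proposition become, respectively, a routine computation and a uniqueness argument.

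For the ``only if'' direction, suppose $\Pi$ is this Poisson point process. Then $N_{\{A,B\}}=\Pi(R_{A,B})$ is automatically Poisson with mean $(\lambda/\pi)\,\text{area}(R_{A,B})$; slicing $R_{A,B}$ at a fixed angle and matching the conventions of Definition~\ref{definition:plp} with those of \eqref{eq:beta} (the normal to $L_{r,\theta}$ makes angle $\theta$ with the $x$-axis, so $L_{r,\theta}$ meets the $x$-axis at angle $\theta+\pi/2$), this area equals $\int_{-\pi/2}^{\pi/2}\psi_{A,B}(\theta)\,d\theta$, whence the mean is $\lambda\beta_{A,B}$, which is (i). For (ii), when the arc pairs $(A_{i},B_{i})$ are pairwise non-intersecting no single line can cross two of them, so the regions $R_{A_{i},B_{i}}$ are pairwise disjoint, and independence of the $N_{\{A_{i},B_{i}\}}$ is the defining property of a Poisson point process on disjoint sets (each count being a.s.\ finite by Lemma~\ref{lemma:local-sparsity}).

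For the ``if'' direction, I would show that (i)--(ii) already determine the law of $\Pi$, and then identify that law using the ``only if'' direction applied to the Poisson line process of rate $\lambda$ itself. There are three ingredients. First, a preliminary consequence of (i): letting one arc shrink to a point $p\in\Gamma$ forces $\beta_{A,B}\to 0$, so $\mathbb{E}\,\Pi(\{\text{lines through }p\})=0$; hence a.s.\ no secant line of $\Pi$ passes through any prescribed finite subset of $\Gamma$ (and only secant lines contribute to the crossing counts). Second, given a finite family of disjoint arc pairs $(A_{i},B_{i})$, refine $\Gamma$ by all of their endpoints into finitely many arcs $\gamma_{1},\dots,\gamma_{L}$; by the first ingredient, up to a null event each $N_{\{A_{i},B_{i}\}}$ is a fixed finite sum of the ``atomic'' counts $\Pi(R_{\gamma_{a},\gamma_{b}})$ over pairs $a\neq b$, and these atomic pairs are pairwise non-intersecting, so (i)--(ii) make the atomic counts independent Poisson with explicit means; thus the joint law of the $N_{\{A_{i},B_{i}\}}$ is determined. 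Third, the regions $R_{A,B}$ generate the Borel $\sigma$-algebra of $\mathcal{D}$: for a parameter whose line is not tangent to $\Gamma$, that line meets $\Gamma$ in two points, and as $A,B$ shrink to small arcs about those points the sets $R_{A,B}$ form a neighborhood basis shrinking to the parameter (here convexity and smoothness of $\Gamma$ enter: off the measure-zero tangency locus $\partial\mathcal{D}$, a secant line depends homeomorphically on its unordered pair of endpoints on $\Gamma$). Combining the second and third ingredients with the standard fact that a locally finite point process is determined by its joint counts over a generating ring, the law of $\Pi$ is pinned down by (i)--(ii); since the Poisson point process of intensity $(\lambda/\pi)\,dr\,d\theta$ satisfies (i)--(ii) by the ``only if'' direction, $\Pi$ must be that process, i.e.\ $\mathcal{L}$ is a Poisson line process of rate $\lambda$.

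I expect the main obstacle to be the third ingredient --- verifying that the crossing regions $\{R_{A,B}\}$ separate points of $\mathcal{D}$ and shrink to neighborhood bases away from the tangency locus --- together with the common-refinement bookkeeping in the second ingredient (closed versus half-open arcs; the null event of lines through refinement points). This is elementary plane geometry, but it is exactly where the hypotheses that $D$ is convex with $C^{\infty}$ boundary are used, and it should be carried out carefully; the remaining pieces are the Fubini evaluation of $\beta_{A,B}$ and a citation to the standard characterization of a Poisson process by its finite-dimensional distributions (cf.\ R\'enyi's theorem).
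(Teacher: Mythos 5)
Your proposal is correct in substance (modulo the endpoint/half-open-arc bookkeeping you yourself flag), but it takes a genuinely different route from the paper in both directions. For the forward implication, the paper does not compute the area of the region $R_{A,B}$ in the $(r,\theta)$ strip directly: it first reduces to arcs of small length (using the disjoint decomposition $L_{\{A,B\}}=\cup_{i,j}L_{\{A_i,B_j\}}$ and additivity of independent Poissons), replaces short arcs by the chords joining their endpoints, and then reuses the angular discretization from the proof of Lemma~\ref{lemma:translation-invariance}: the process is written as a superposition of $m$ independent fixed-angle line processes, each contributing a Poisson count with mean $\lambda\psi_{A,B}(\theta_{k})/m$ via a ``shadow'' argument on the $x$-axis, and the Riemann sum converges to $\lambda\beta_{A,B}$. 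Your direct pullback-plus-Fubini computation is shorter and makes the constant in \eqref{eq:beta} transparent, at the cost of carefully matching the normal-angle convention of \eqref{eq:point-to-line-mapping} with the definition of $\psi_{A,B}$; the paper's version avoids that matching but is longer. For the converse, the paper gives no self-contained argument at all: it observes that the converse follows from Proposition~\ref{proposition:sufficientCondition} applied to the constant sequence $\mathcal{L}_{n}=\mathcal{L}$ (weak convergence of a constant sequence is equality of laws), so the real work is hidden in the step-function approximation on configuration space used there. Your route---refine by endpoints, use (i)--(ii) to pin down the joint law of the atomic counts, and invoke the uniqueness of a (finite) point process given its finite-dimensional distributions over a generating class---is the standard point-process argument and has the virtue of not depending on Section~\ref{ssec:poissonApprox}. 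One caveat on your second ingredient: the atomic pairs $\{\gamma_{a},\gamma_{b}\}$ arising from a common refinement share arcs across different pairs, so hypothesis (ii) as literally worded (``non-intersecting boundary arc pairs'') applies to them only under the reading that no single line can cross two of the pairs, i.e.\ that the sets $L_{\{A_{i},B_{i}\}}$ are pairwise disjoint; this is exactly the reading the paper itself needs (both in its appendix proof, where the pairs $\{A_{i},B_{j}\}$ share arcs, and in Proposition~\ref{proposition:sufficientCondition}), so your usage is consistent with the intended statement, but you should say so explicitly, and note that (as in the paper's appendix) strict convexity of $D$ is what guarantees that a secant meets $\Gamma$ in exactly two points, which underlies both your disjointness claims and your neighborhood-basis argument.
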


See Appendix~\ref{sec:appendix} for the proof of the forward
implication, along with that of the following corollary. The converse
implication in Proposition~\ref{proposition:characterization}
will follow from Proposition~\ref{proposition:sufficientCondition} in
section~\ref{ssec:poissonApprox} below.

\begin{corollary}\label{corollary:ppIntensity}
Let $D\subset \zz{R}^{2}$ be a compact, convex region, and let $\mathcal{L}$ be a
Poisson line process with intensity $\kappa$.  The number $V(D)$ of
intersection points (vertices) of $\mathcal{L}$ in
$D$ has expectation
\[
	EV (D )=\kappa^{2}  |D|/\pi 
\]
where $|D|$ is the Lebesgue measure of $D$.
\end{corollary}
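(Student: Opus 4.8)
The plan is to express $V(D)$ as a sum over unordered pairs of distinct lines of $\mathcal{L}$ of the indicator that the two lines cross at a point of $D$, and then evaluate its expectation via the second-order Campbell (Mecke) formula for Poisson processes. By Lemma~\ref{lemma:local-sparsity} together with the fact that the directions $\Theta_n$ are i.i.d.\ with a continuous distribution, with probability one no two distinct lines of $\mathcal{L}$ are parallel and no three are concurrent, so a.s.
\[
	V(D)=\sum_{\{m,n\}:\,m\neq n}\mathbf{1}\{L_m\cap L_n\in D\},
\]
the sum being over unordered pairs. Since $\{(R_n,\Theta_n)\}$ is a Poisson process on $\R\times[0,\pi)$ with intensity measure $\mu(dr\,d\theta)=\tfrac{\lambda}{\pi}\,dr\,d\theta$, the Mecke formula (for nonnegative symmetric $f$, $E\sum_{\{x,y\}}f(x,y)=\tfrac12\iint f\,d\mu\,d\mu$) yields
\[
	EV(D)=\frac12\Bigl(\frac{\lambda}{\pi}\Bigr)^{2}\int_{0}^{\pi}\!\!\int_{0}^{\pi}\!\!\int_{\R}\!\!\int_{\R}\mathbf{1}\{L_{(r_1,\theta_1)}\cap L_{(r_2,\theta_2)}\in D\}\,dr_1\,dr_2\,d\theta_1\,d\theta_2.
\]

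Next I would evaluate the inner double integral, for each fixed pair $\theta_1\neq\theta_2$, by changing variables from $(r_1,r_2)$ to the crossing point $(x,y)\in\R^2$. The relation $r_i=x\cos\theta_i+y\sin\theta_i$ defines a linear bijection $\R^2\to\R^2$ whose Jacobian determinant is $\cos\theta_1\sin\theta_2-\sin\theta_1\cos\theta_2=\sin(\theta_2-\theta_1)$, so $dr_1\,dr_2=|\sin(\theta_2-\theta_1)|\,dx\,dy$ and the inner integral equals $|\sin(\theta_2-\theta_1)|\,|D|$. Substituting back,
\[
	EV(D)=\frac{\lambda^{2}|D|}{2\pi^{2}}\int_{0}^{\pi}\!\!\int_{0}^{\pi}|\sin(\theta_2-\theta_1)|\,d\theta_1\,d\theta_2
	=\frac{\lambda^{2}|D|}{2\pi^{2}}\cdot 2\pi=\frac{\lambda^{2}|D|}{\pi},
\]
where the middle equality uses that $|\sin|$ has period $\pi$, so the inner integral over $\theta_1$ equals $\int_0^\pi|\sin|=2$ for every $\theta_2$. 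This is exactly the claimed value.

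The computation is routine; what requires a little care is the measure-theoretic bookkeeping — invoking the Mecke identity correctly, checking that the excluded configurations (parallel pairs, concurrent triples, lines tangent to or terminating on $\partial D$) carry no probability or Lebesgue mass, and justifying Fubini and the change of variables, all of which are harmless because the integrand is nonnegative. I do not anticipate any genuine obstacle. (Note that convexity of $D$ is not used here: only measurability and $|D|<\infty$ are needed.) An alternative route, staying closer to what is already available, would combine Corollary~\ref{corollary:line-intensity} with the Mecke formula applied to the line process: each vertex lies on exactly two lines, so $EV(D)=\tfrac12 E\sum_{\ell\in\mathcal{L}}\#\{\text{crossings of }\ell\text{ in }D\}=\tfrac12\int \tfrac{2\lambda}{\pi}\,|\ell\cap D|\,\mu(d\ell)$, and $\int|\ell\cap D|\,\mu(d\ell)=\lambda|D|$ by the elementary Fubini identity $\int_\R|\ell_{r,\theta}\cap D|\,dr=|D|$; this gives the same answer.
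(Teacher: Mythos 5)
Your proof is correct, but it is a genuinely different argument from the one in the paper. You compute the second factorial moment directly: write $V(D)$ as a sum over unordered pairs of lines (legitimate a.s.\ since parallel pairs and concurrent triples are null events), apply the Mecke/Campbell formula for the Poisson process on $\mathbb{R}\times[0,\pi)$ with intensity $\tfrac{\lambda}{\pi}\,dr\,d\theta$, and evaluate the resulting integral by the change of variables $(r_1,r_2)\mapsto(x,y)$ with Jacobian $|\sin(\theta_2-\theta_1)|$, whose integral over $[0,\pi]^2$ is $2\pi$. The paper instead localizes: by translation invariance it suffices to treat small disks $B(0,\varrho)$; conditioning on a chord $\gamma$ being present (a null event, handled via a limiting footnote and the independence statement in Proposition~\ref{proposition:characterization}), Corollary~\ref{corollary:line-intensity} gives a Poisson number of crossings on $\gamma$ with mean $2\lambda|\gamma|/\pi$, so $EV(B(0,\varrho))=\tfrac{\lambda}{\pi}E\Lambda(\mathcal{L}\cap B(0,\varrho))$ after halving for double counting, and the expected total chord length is computed from the explicit formula $|\gamma_n|=2\sqrt{\varrho^{2}-R_n^{2}}$ in the standard construction of Definition~\ref{definition:plp}. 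What your route buys: it is self-contained (no conditioning on null events, no localization), it makes the constant $\lambda^2/\pi$ emerge transparently from $\int_0^\pi\int_0^\pi|\sin(\theta_2-\theta_1)|\,d\theta_1\,d\theta_2=2\pi$, and, as you note, it needs only measurability and $|D|<\infty$, not convexity. What the paper's route buys: it reuses structural facts already established (isometry invariance, the line-intensity corollary, the characterization), which is in keeping with the appendix's economy; your closing "alternative route" (Palm/Mecke at first order plus the Cavalieri identity $\int_{\mathbb{R}}|\ell_{r,\theta}\cap D|\,dr=|D|$) is essentially the paper's argument globalized to all of $D$ at once, and is also correct.
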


\subsection{Ergodic theorem for Poisson line
processes}\label{ssec:erg-theorem-plp}

The {configuration space} $\mathcal{C}$ in which a Poisson line
process takes values is the set of all countable, locally finite
collections of lines in $\zz{R}^{2}$. This space has a natural metric
topology, specifically, the weak topology generated by the Hausdorff
topologies on the restrictions to balls in $\zz{R}^{2}$. Moreover,
$\mathcal{C}$ admits an action (by translations) of
$\zz{R}^{2}$. Denote by $\nu_{\kappa}$ the distribution of the
Poisson line process with intensity $\kappa$. By
Lemma~\ref{lemma:translation-invariance}, the measure $\nu_{\kappa}$
is translation-invariant.

\begin{proposition}\label{proposition:ergodicity-plp}
The probability measure $\nu_{\kappa}$ is mixing (and therefore
ergodic) with respect to the translational action of $\zz{R}^{2}$ on
$\mathcal{C}$. 
\end{proposition}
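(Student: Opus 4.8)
The plan is to prove the stronger statement that $\nu_\lambda$ is mixing under the $\R^2$-translation action, since mixing implies ergodicity. The key structural fact about the Poisson line process that makes this work is \emph{independence on disjoint regions}: by Proposition \ref{proposition:characterization}(ii) (or directly from the Poisson point process structure in Definition \ref{definition:plp}), the restrictions of $\mathcal{L}$ to two disjoint bounded convex sets are independent random variables. First I would reduce the mixing condition $\nu_\lambda(A \cap \tau_v^{-1} B) \to \nu_\lambda(A)\nu_\lambda(B)$ as $|v| \to \infty$ to the case where $A$ and $B$ are \emph{cylinder events}, i.e.\ events depending only on the configuration of lines meeting some fixed ball $B(0;R)$. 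This reduction is justified because such cylinder events generate the Borel $\sigma$-algebra on $\mathcal{C}$ (the topology is the weak topology generated by Hausdorff topologies on balls), so they are dense in $L^2(\nu_\lambda)$ in the appropriate sense, and a standard $3\varepsilon$-argument passes mixing from a generating algebra to all measurable sets.

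For cylinder events, the argument is then essentially immediate. Suppose $A$ depends only on lines meeting $B(0;R)$ and $B$ depends only on lines meeting $B(0;R')$. Then $\tau_v^{-1}B$ depends only on lines meeting $B(v;R')$, i.e.\ on the restriction of $\mathcal{L}$ to $B(v;R')$. Once $|v| > R + R'$, the balls $B(0;R)$ and $B(v;R')$ are disjoint. However — and this is the one genuine subtlety — a single line can meet \emph{both} far-apart balls, so the events are not literally independent. The fix is to note, via Lemma \ref{lemma:local-sparsity} and a direct computation with the $(R_n,\Theta_n)$ coordinates, that the expected number of lines meeting both $B(0;R)$ and $B(v;R')$ tends to $0$ as $|v|\to\infty$: a line meeting both balls must have its angle $\Theta_n$ within $O(1/|v|)$ of the direction perpendicular to $v$ and its offset $R_n$ in a bounded interval, so the intensity of such lines is $O(1/|v|)$. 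Hence with probability $\to 1$ there are no such lines, and on that event the line configurations in the two balls are governed by independent Poisson processes, giving $\nu_\lambda(A \cap \tau_v^{-1}B) = \nu_\lambda(A)\nu_\lambda(B) + o(1)$.

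I expect the main obstacle to be purely expository rather than mathematical: making the reduction to cylinder events fully rigorous requires care about which $\sigma$-algebra one is working with and a clean statement that finite unions of cylinder events form an algebra generating the Borel structure, together with the standard approximation lemma that mixing on such an algebra extends to the whole $\sigma$-algebra. The probabilistic heart — disjoint-region independence plus the vanishing intensity of "long" lines connecting the two windows — is short. One could alternatively invoke a general theorem on mixing of Poisson processes under measure-preserving transformations of the intensity space, but giving the self-contained argument above seems cleaner and keeps the paper's reliance on the explicit description in Definition \ref{definition:plp}.
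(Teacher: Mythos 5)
Your proposal is correct and follows essentially the same route as the paper's Appendix proof: reduce to events (there, bounded continuous functions) depending only on the configuration in a fixed ball, exploit the independence of the underlying Poisson point process on disjoint regions of the strip $\mathbb{R}\times[0,\pi)$, and use the key estimate that a line meeting the window at the origin can also meet a window near $v$ only if its angle lies in a set of measure $O(1/|v|)$, an event whose probability vanishes. The paper packages this as an explicit splicing/coupling of two independent strip processes rather than conditioning on the absence of lines in the overlap region, but the mathematical content is the same.
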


\begin{remark}\label{remark:kendall}
Ergodicity of the measure $\nu_{\kappa}$ is asserted in Miles'
papers~\cite{miles:pnas1}, \cite{miles:pnas2}, and proved in his
unpublished Ph.~D. dissertation. We have been unable to locate a
proof in the  published literature, so we have provided one in the Appendix.
\end{remark}

\begin{corollary}\label{corollary:poly-freqs}
Let $\Phi _{n,k}$ be the fraction of $k-$gons, $F_{n}$ (for ``faces'') the
total number of polygons, and $V_{n}$ (for ``vertices'') the number of intersection
points in the tessellation of the
square $[-n,n]^{2}$ induced by a Poisson line process $\mathcal{L}$ of intensity
$\kappa$. There exist constants $\phi _{k}>0$ such that with 
probability $1$,
\begin{align}\label{eq:poly-freqs}
		&\lim_{n \rightarrow \infty} F_{n}/ (2n)^{2}=\kappa^{2} /\pi  ,\\
\label{eq:vertex-freqs}
 	&\lim_{n \rightarrow \infty} V_{n}/ (2n)^{2}=\kappa^{2}/\pi , \quad \text{and}\\
\label{eq:kgonFracs}
 	&\lim_{n \rightarrow \infty} \Phi _{n,k}=\phi _{k} .
\end{align}
\end{corollary}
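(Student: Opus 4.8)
The plan is to deduce Corollary~\ref{corollary:poly-freqs} from the ergodic theorem for the translational $\zz{R}^2$-action (Proposition~\ref{proposition:ergodicity-plp}) together with the superadditive/subadditive ergodic theorem, after expressing $F_n$, $V_n$, and the $k$-gon counts as additive functionals of the process restricted to $[-n,n]^2$. First I would fix the Poisson line process $\mathcal{L}$ and, for a box $Q=[a_1,b_1]\times[a_2,b_2]$, let $V(Q)$, $F(Q)$, and $F_k(Q)$ denote the number of vertices, faces, and $k$-gonal faces of the induced tessellation of $Q$ (faces being the connected components of $Q\setminus\bigcup \mathcal{L}$). The key observation is that these are \emph{almost} additive under subdivision of $Q$ into subboxes: when we cut $Q$ along a grid line, we create no new interior vertices, but faces straddling the cut get split, and faces and vertices on the new boundary segment are double-counted or mis-shapen. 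The discrepancy is controlled by the number of lines of $\mathcal{L}$ meeting the cut, which by Corollary~\ref{corollary:line-intensity} grows only linearly in the length of the cut, hence is of lower order than the area. This makes $V$, $F$, $F_k$ into functionals to which the multiparameter subadditive ergodic theorem of Akcoglu--Krengel applies: $-V(\cdot)$, $-F(\cdot)$, $-F_k(\cdot)$ are (up to a boundary error term that is $o(\text{area})$) superadditive over the lattice of boxes, integrable (finite expectation, e.g. $EV([-n,n]^2)=\lambda^2(2n)^2/\pi$ by Corollary~\ref{corollary:ppIntensity}), and translation-covariant, with the $\zz{R}^2$-action ergodic by Proposition~\ref{proposition:ergodicity-plp}.

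Carrying this out, the subadditive ergodic theorem yields, almost surely,
\[
  \lim_{n\to\infty}\frac{V_n}{(2n)^2}=v,\qquad
  \lim_{n\to\infty}\frac{F_n}{(2n)^2}=f,\qquad
  \lim_{n\to\infty}\frac{F_{n,k}}{(2n)^2}=f_k
\]
for deterministic constants $v,f,f_k\ge 0$, where $F_{n,k}$ is the number of $k$-gons. Taking expectations and using uniform integrability (which follows from a second-moment bound, again obtainable from the Poisson structure via the same line-counting estimates), we identify $v=\lim EV_n/(2n)^2=\lambda^2/\pi$ by Corollary~\ref{corollary:ppIntensity}. For $f$, I would invoke Euler's formula for the planar graph induced in the box: $V-E+F=1+C$ where $C$ counts connected components of the arrangement and $E$ the edges; a direct count shows each interior vertex has degree $4$, so $2E = 4V + O(\text{boundary})$, giving $F = E - V + O(\text{boundary}) = V + O(\text{boundary})$, whence $f=v=\lambda^2/\pi$, which is \eqref{eq:poly-freqs} and \eqref{eq:vertex-freqs}. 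Finally $\phi_k := f_k/f$ is the desired deterministic limiting fraction, and positivity $\phi_k>0$ follows because with positive probability a bounded box contains a configuration of lines producing a $k$-gon (this is a genuine event of positive measure for every $k\ge 3$, since one can prescribe $k$ lines in general position bounding a $k$-gon and perturb), so by ergodicity such configurations occur with positive density; thus $f_k>0$, giving \eqref{eq:kgonFracs}.

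The main obstacle I expect is not the existence of the limits but the careful bookkeeping of the boundary error terms: one must verify that chopping a box into pieces changes $V$, $F$, and each $F_k$ by an amount bounded by a constant times the number of lines of $\mathcal{L}$ crossing the cut lines, and that this count has mean $O(n)$ and the right tail behavior to legitimately apply the Akcoglu--Krengel theorem (which requires the superadditivity to hold exactly, so one typically works with a slightly corrected functional, e.g.\ $V(Q)$ minus the number of vertices within distance $\varepsilon$ of $\partial Q$, or uses the ``almost superadditive'' refinement). A clean way around this is to sandwich $F_n$ (resp.\ $V_n$, $F_{n,k}$) between an exactly superadditive lower functional $\underline{F}(Q)$ (counting only faces wholly contained in the interior) and an exactly subadditive upper functional $\overline{F}(Q)$ (counting all faces meeting $Q$), show both have the same density via the a.s.\ convergence given by the ergodic theorem plus the $o(\text{area})$ gap between them, and conclude. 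The other point requiring care is justifying positivity of $\phi_k$ rigorously, i.e.\ that the event ``$[-1,1]^2$ contains at least one $k$-gon of the tessellation and this face lies in the interior'' has positive $\nu_\lambda$-probability; this is elementary but should be stated, and then ergodicity forces a positive density of translates of such events.
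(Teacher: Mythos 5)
Your argument is correct in outline, but it takes a genuinely different route from the paper. The paper avoids subadditivity altogether: it sets $f(\mathcal{L})=1/\bigl(\text{area of the polygon containing the origin}\bigr)$ and observes that $\int_{A}f(\tau_z\mathcal{L})\,dz$ \emph{exactly} counts the polygons making up a region $A$ that is a union of faces, so the ordinary multiparameter (additive) pointwise ergodic theorem of Calder\'{o}n applies directly; boundary effects are handled by showing that, except with exponentially small probability, at most $n^{3/2}$ faces meet the sides of $[-n,n]^{2}$ (via the Poisson count of lines crossing a segment). The $k$-gon statement then comes for free with $\phi_k=E\bigl(f\mathbf{1}_{G_k}(\mathcal{L})\bigr)/Ef(\mathcal{L})$, where $G_k$ is the event that the zero-cell is a $k$-gon, and the constant $\lambda^2/\pi$ is pinned down through $V_n=F_n+O(n^{3/2})$ (Euler plus degree-$4$ vertices) and $EV_n$ from Corollary~\ref{corollary:ppIntensity}. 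Your Akcoglu--Krengel route, with the sandwich between the exactly superadditive count of faces wholly inside the box and the exactly subadditive count of faces meeting the box, is a legitimate alternative; it trades the paper's clever $1/\text{area}$ device for heavier bookkeeping, and it buys you a self-contained identification of the limits (indeed the superadditive theorem already identifies them as $\lim E[\cdot]/(2n)^2$, so your uniform-integrability detour via a second moment is not even needed), plus an explicit positivity argument for $\phi_k$ that the paper omits. Two small points to tighten: the claim that every interior vertex has degree $4$ holds only almost surely, because one must first rule out three lines of $\mathcal{L}$ meeting at a point (a probability-zero event, which the paper states explicitly); and in the Euler-formula step you should note that the number of connected components of the arrangement inside the box is bounded by the number of chords, hence is itself $O(n)$ and absorbed into your boundary term.
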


Integral formulas  for the quantities $\phi_k$ are given
in~\cite{Calka}.

The ergodic theorem  can also be used to prove that a variety of  other statistical
properties stabilize in large squares. Consider, for example,  the number $N_{n}
(A,B,C)$ of triangles contained in $[-n,n]^{2}$ whose side lengths
$\alpha ,\beta ,\gamma$ lie in the intervals $A,B,C$; then as $n
\rightarrow \infty$,
\[
	 N_{n} (A,B,C)/ (2n)^{2} \longrightarrow E \mathbf{1}_{G (A,B,C)} (\mathcal{L})
\]
where $G (A,B,C)$ is the event that the polygon containing the  origin
is a triangle with side lengths in $A,B,C$.

\subsection{Weak convergence to a  Poisson line
process}\label{ssec:poissonApprox}

For any unordered pair $\{A,B \}$ of non-overlapping boundary arcs of
the disk $B (0,\alpha)$, let $L_{\{A,B \}}$ be the set of lines in
$\zz{R}^{2}$ that intersect both $A$ and $B$. This set can be
identified with the set of point pairs $\{x,y \}$ where $x\in A$ and $y\in
B$. This allows us to view any random collection of unordered point pairs $\{x,y
\}$ as a \emph{line process} in $B (0,\alpha)$, even when the
collection consists of endpoints of arcs across $B (0,\alpha)$ that
are not line segments (in particular, when they are pullbacks of
geodesic arcs to the tangent space). For any line process
$\mathcal{L}$ in $B (0,\alpha)$ let $N_{\{A,B \}}$ be the cardinality
of $\mathcal{L}\cap L_{\{A,B \}}$
(cf. equation~\eqref{eq:crossingCounts}).

\begin{proposition}\label{proposition:sufficientCondition}
Let $\mathcal{L}_{n}$ be a sequence of line processes in $B
(0;\alpha)$, and let $\mu_{n}$ be the distribution of
$\mathcal{L}_{n}$ (i.e., the probability measure on $\mathcal{X}$
induced by $\mathcal{L}_{n}$). In order that $\mu_{n}\rightarrow \mu$
weakly, where $\mu$ is the law of a rate$-\kappa$ Poisson line
process, it suffices that the following condition holds. For any
finite collection $\{\{ A_{i},B_{i}\} \}_{i\leq m}$ of unordered pairs
of non-overlapping boundary arcs of $B(0;\alpha)$ such that the sets
$L_{\{A_{i},B_{i} \}}$ are pairwise disjoint, the joint distribution
of the counts $N_{\{ A_{i},B_{i}\}}$ under $\mu_{n}$ converges to the joint
distribution under $\mu$, that is, for any choice of nonnegative
integers $k_{i}$,
\begin{equation}\label{eq:suff-weak-convergence}
	\lim_{n \rightarrow \infty} \mu_{n}\{N_{\{A_{i},B_{i} \}}=k_{i} \;
	\forall \,i\leq m\} =\prod_{i=1}^{m}
	\frac{(\kappa  \beta_{A_{i},B_{i}})^{k_{i}}}{k_{i}!}
	e^{-\kappa  \beta_{A_{i},B_{i}}} .
\end{equation}
\end{proposition}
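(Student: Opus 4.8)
The plan is the standard two-part strategy for weak convergence on the (Polish) configuration space $\mathcal{X}$: first establish tightness of $\{\mu_n\}$, then identify every weak subsequential limit as $\mu$; Prokhorov's theorem then yields $\mu_n\to\mu$. It is convenient to view an element of $\mathcal{X}$ as a (finite, simple) point configuration on the space $\mathcal{M}$ of chords of $B(0;\alpha)$, i.e.\ of unordered pairs of distinct points of $\partial B(0;\alpha)$; then $L_{\{A,B\}}$ is the set of chords with one endpoint in $A$ and one in $B$, the counting functionals $N_{\{A,B\}}$ are the point-count functionals, and $\mu$ is the law of the Poisson point process on $\mathcal{M}$ whose intensity measure gives mass $\lambda\beta_{A,B}$ to $L_{\{A,B\}}$ — this normalization is exactly the forward direction of Proposition~\ref{proposition:characterization}, and Lemma~\ref{lemma:local-sparsity} ensures $\mu$ is supported on $\mathcal{X}$.

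Step 2 (identification of limits). Suppose $\mu_{n_k}\to\mu^\ast$ weakly; I want $\mu^\ast=\mu$. Fix a finite family $\{\{A_i,B_i\}\}_{i\le m}$ of non-overlapping arc pairs whose line-bundles $L_{\{A_i,B_i\}}$ are pairwise disjoint. The key observation is that the functional $\mathcal{L}\mapsto\bigl(N_{\{A_i,B_i\}}(\mathcal{L})\bigr)_{i\le m}$ is continuous at every configuration no chord of which has an endpoint lying on $\bigcup_i(\partial A_i\cup\partial B_i)$, and that — since a fixed configuration has only finitely many chord-endpoints on $\partial B(0;\alpha)$ — a Fubini argument over the choice of the arc-endpoints shows that for Lebesgue-a.e.\ choice of the arcs this functional is continuous $\mu^\ast$-almost surely (and $\mu$-almost surely). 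For such ``generic'' arc families the mapping theorem gives that $\bigl(N_{\{A_i,B_i\}}\bigr)_{i}$ under $\mu_{n_k}$ converges in law to its law under $\mu^\ast$; by hypothesis \eqref{eq:suff-weak-convergence} this limit is $\bigotimes_i\mathrm{Poisson}(\lambda\beta_{A_i,B_i})$, which — again by the forward direction of Proposition~\ref{proposition:characterization} — is its law under $\mu$. The generic sets $L_{\{A,B\}}$ separate the chords of $\mathcal{M}$ and generate a dissecting ring, $\mu^\ast$ is simple (its two-point intensities vanish on small sets, since $\beta_{A,B}\to0$ as the arcs shrink), and the joint laws of finite count-vectors over such a ring determine the law of a simple point process; hence $\mu^\ast=\mu$.

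Step 1 (tightness) is the main obstacle. Because $\mathcal{X}=\bigsqcup_N\mathcal{X}_N$ with the strata mutually at distance $\infty$, a subset is relatively compact precisely when the number of chords is bounded on it, so tightness is the assertion $\lim_{N\to\infty}\sup_n\mu_n\{\#\mathcal{L}_n>N\}=0$. Choosing a dissection of $\partial B(0;\alpha)$ into finitely many pairwise disjoint closed arcs $J_1,\dots,J_r$ separated by small gaps, the pairs $\{J_a,J_b\}$ have pairwise disjoint line-bundles, so by \eqref{eq:suff-weak-convergence} the law of $\sum_{a<b}N_{\{J_a,J_b\}}$ under $\mu_n$ converges to its law under $\mu$, which is dominated by the (a.s.\ finite, indeed $\mathrm{Poisson}(2\lambda\alpha)$) total chord count of the Poisson line process by Lemma~\ref{lemma:local-sparsity}; thus $\{\sum_{a<b}N_{\{J_a,J_b\}}\}_n$ is tight. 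This controls every chord of $\mathcal{L}_n$ except those with both endpoints inside a single gap. Ruling out an accumulation of such ``very short'' chords does not follow formally from \eqref{eq:suff-weak-convergence} applied to any single dissection, since within-gap chords are never counted by disjoint closed arc pairs; this is the delicate point, and I expect to need either an a priori bound on $\#\mathcal{L}_n$ — available in the intended applications, where $\mathcal{L}_n=A_{T_n}$ and a Birkhoff/ergodic estimate bounds the expected number of crossings of $D(x,\alpha T_n^{-1})$ by a length-$T_n$ geodesic — or, equivalently, the tightness of $\{\#\mathcal{L}_n\}_n$ as a supplementary hypothesis. Granting this, Steps 1--2 combine to yield $\mu_n\to\mu$ weakly.
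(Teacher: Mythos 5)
Your route is genuinely different from the paper's. The paper does not pass through tightness and subsequential limits at all: it observes that the strata $\mathcal{X}_{k}$ are clopen, so that it suffices to verify \eqref{eq:weak-conv-definition} for continuous $f$ supported on a single $\mathcal{X}_{k}$, and then uniformly approximates such $f$ by ``step functions'' depending only on the counts $N_{\{A_{i},B_{i}\}}$ over a fine partition of $\partial B(0,\alpha)$, concluding by linearity of expectation from \eqref{eq:suff-weak-convergence}. Your Step 2 --- almost-everywhere continuity of the count functionals for generic arc families, the mapping theorem along a weakly convergent subsequence, and the fact that the law of a simple point process on the space of chords is determined by its finite-dimensional count distributions over a dissecting family, with the limit identified via the forward half of Proposition~\ref{proposition:characterization} --- is a sound, if heavier, substitute for the paper's approximation argument, and it correctly handles the discontinuity of $N_{\{A,B\}}$ at configurations with a chord endpoint on an arc boundary, a point the paper's sketch passes over.

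The gap you flag in Step 1 is genuine, and you are right that it cannot be closed from \eqref{eq:suff-weak-convergence} alone. A chord whose two endpoints lie in the interior of a single small arc is invisible to every count $N_{\{A,B\}}$ with $A,B$ non-overlapping; so one may superpose on a rate-$\lambda$ Poisson line process a single deterministic chord whose two endpoints stay distinct but coalesce to one boundary point as $n\to\infty$. For each fixed admissible family $\{\{A_{i},B_{i}\}\}_{i\le m}$ the count vector then has, for all large $n$, exactly its law under $\mu$, so the hypothesis holds; yet $\mu_{n}(\mathcal{X}_{0})=0$ while $\mu(\mathcal{X}_{0})=e^{-2\lambda \alpha}>0$ by Lemma~\ref{lemma:local-sparsity}, and $\mathbf{1}_{\mathcal{X}_{0}}$ is bounded and continuous, so $\mu_{n}\not\to\mu$. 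Thus the criterion, read literally, needs a supplementary control such as tightness of the total chord count (equivalently $\mu_{n}(\mathcal{X}_{k})\to\mu(\mathcal{X}_{k})$ for each $k$, or count convergence for pairs of abutting sub-arcs of a single arc), exactly as you propose. Note that the paper's own sketch quietly requires the same thing: functions of the cross-arc counts cannot uniformly approximate all continuous functions on $\mathcal{X}_{k}$ (they cannot tell in which arc a short within-arc chord sits), and the reduction to single strata needs convergence of the stratum masses. In the intended application this control is available, since the expected total number of crossings of $D(x,\alpha T^{-1})$ stays bounded, so adding tightness of $\#\mathcal{L}_{n}$ as a hypothesis (and verifying it there) is the right repair; but as written, and as you yourself acknowledge, your argument does not prove the proposition as stated.
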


\begin{proof}
[Proof Sketch] Recall that the configuration space $\mathcal{X}$ is the
disjoint union of the  sets $\mathcal{X}_{k}$, where
$\mathcal{X}_{k}$ is the set of all finite sets $F=\{\{x_{i},y_{i} \}
\}_{1\leq i\leq k}$ consisting of $k$ unordered pairs of points on
$\partial B (0,\alpha)$. Since each set $\mathcal{X}_{k}$ is both open
and closed in $\mathcal{X}$, to prove weak convergence
$\mu_{n}\rightarrow \mu$ it suffices to establish the
convergence \eqref{eq:weak-conv-definition} for every continuous
function $f$ supported by just one of the sets $\mathcal{X}_{k}$.

For each $k$, the space $\mathcal{X}_{k}$ is a quotient of $(\partial
B (0,\alpha)^{2})^{k}$ with the usual topology, and so every
continuous function $f:\mathcal{X}_{k}\rightarrow \zz{R}$ can be
uniformly approximated by ``step functions'', that is, functions $g$
of configurations $F=\{\{x_{i},y_{i} \} \}_{1\leq i\leq k}$ that
depend only on the counts $N_{A_{i},B_{i}}$ 
for arcs $A_{i},B_{i}$ in some partition of $\partial B (0,\alpha)$.
If \eqref{eq:suff-weak-convergence} holds, then it follows by
linearity of expectations that for any such step function $g$,
\[
	\lim_{n \rightarrow \infty} \int g\,d\mu_{n}=\int g\,d\mu ,
\]
and hence  \eqref{eq:weak-conv-definition}  follows.
\end{proof}

\subsection{The ``law of small numbers''}\label{ssec:lsn}

A elementary theorem of discrete probability theory states that for
large $n$, the Binomial$- (n,\kappa /n)$ distribution is closely
approximated by the Poisson distribution with mean
$\kappa$. Following is a generalization that we will find useful.

\begin{proposition}\label{proposition:lsn}
Let $X_{1},X_{2},\dotsc ,X_{n}$ be independent Bernoulli random
variables with success parameters $EX_{i}=p_{i}$. Let $\alpha
=\max_{i}p_{i}$ and $\kappa =\sum_{i}p_{i}$. Then there is a constant
$C<\infty$ not depending on $p_{1},p_{2},\dotsc ,p_{n}$ such that 
\[
	\sum_{k=0}^{\infty} \left\lvert P\left\{\sum_{i}X_{i}=k
	\right\}-\frac{\kappa^{k}}{k!}e^{-\kappa}\right\rvert \leq C\alpha . 
\]
\end{proposition}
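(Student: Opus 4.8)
The plan is to prove the estimate by the \emph{Stein--Chen method} of Poisson approximation. Write $S=\sum_{i=1}^{n}X_{i}$, and recall $\beta=\sum_{i}p_{i}=ES$. Observe first that
\[
  \sum_{k=0}^{\infty}\Bigl\lvert P\{S=k\}-\tfrac{\beta^{k}}{k!}e^{-\beta}\Bigr\rvert
  =2\,d_{\mathrm{TV}}\bigl(S,\operatorname{Poisson}(\beta)\bigr),
\]
so (the right-hand side of the statement is to be read with $\lambda=\beta$) it suffices to bound the total variation distance between the law of $S$ and the Poisson law of mean $\beta$ by $C\alpha/2$, uniformly over all choices of $p_{1},\dots,p_{n}$.

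For each set $A\subseteq\zz{Z}_{\ge 0}$ I would introduce the solution $g=g_{A}$ of the Stein equation $\beta\,g(k+1)-k\,g(k)=\mathbf{1}_{A}(k)-\pi_{\beta}(A)$ for $k\ge 0$, where $\pi_{\beta}$ is the $\operatorname{Poisson}(\beta)$ mass function; this recursion determines $g$ uniquely once one sets $g(0)=0$. The one nonelementary ingredient, taken from the standard Poisson-approximation literature (Barbour--Holst--Janson; Chen), is the bound on the \emph{first difference} $\Delta g(k):=g(k+1)-g(k)$ of the solution, namely $\lVert\Delta g_{A}\rVert_{\infty}\le (1-e^{-\beta})/\beta\le\min(1,\beta^{-1})$. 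This ``Stein factor'' estimate is the main obstacle: it is proved by writing $g_{A}$ explicitly in terms of partial sums of Poisson weights and carrying out a monotonicity analysis. For the present preliminary section I would simply cite it (or include the short computation in the appendix alongside the other sketches).

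Granting that bound, the rest is routine manipulation with the independence of the $X_{i}$. Substituting $S$ into the Stein equation gives $P\{S\in A\}-\pi_{\beta}(A)=E[\beta\,g(S+1)-S\,g(S)]$. Put $S_{i}:=S-X_{i}$; since $X_{i}\in\{0,1\}$ and $X_{i}$ is independent of $S_{i}$, one has $E[X_{i}g(S)]=E[X_{i}g(S_{i}+1)]=p_{i}E[g(S_{i}+1)]$, hence $E[S\,g(S)]=\sum_{i}p_{i}E[g(S_{i}+1)]$, while $E[\beta\,g(S+1)]=\sum_{i}p_{i}E[g(S+1)]$. Subtracting and using $g(S_{i}+X_{i}+1)-g(S_{i}+1)=X_{i}\,\Delta g(S_{i}+1)$ (again as $X_{i}\in\{0,1\}$) together with independence once more yields the identity
\[
  P\{S\in A\}-\pi_{\beta}(A)=\sum_{i=1}^{n}p_{i}^{2}\,E\bigl[\Delta g(S_{i}+1)\bigr].
\]
Therefore $\lvert P\{S\in A\}-\pi_{\beta}(A)\rvert\le\lVert\Delta g\rVert_{\infty}\sum_{i}p_{i}^{2}\le\beta^{-1}\sum_{i}p_{i}^{2}$, and since $\sum_{i}p_{i}^{2}\le(\max_{i}p_{i})\sum_{i}p_{i}=\alpha\beta$, the right side is $\le\alpha$. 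Taking the supremum over $A$ gives $d_{\mathrm{TV}}\bigl(S,\operatorname{Poisson}(\beta)\bigr)\le\alpha$, which is the claimed inequality with $C=2$.

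It is worth noting why a more elementary argument does not suffice. The crude Le Cam coupling --- couple each $X_{i}$ with an independent $\operatorname{Poisson}(p_{i})$ variable $Y_{i}$ so that $P\{X_{i}\ne Y_{i}\}=p_{i}(1-e^{-p_{i}})\le p_{i}^{2}$, and note $\sum_{i}Y_{i}\sim\operatorname{Poisson}(\beta)$ --- only gives $d_{\mathrm{TV}}\le\sum_{i}p_{i}^{2}\le\alpha\beta$, which degrades as $\beta\to\infty$. The improvement from $\alpha\beta$ to $\alpha$ is supplied precisely by the Stein factor $\beta^{-1}$ in $\lVert\Delta g\rVert_{\infty}$, so some form of the Stein--Chen argument (rather than a direct coupling or a Lindeberg-type swap, whose concentration-function bounds only give $\beta^{-1/2}$) seems to be needed.
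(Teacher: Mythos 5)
Your proof is correct, and it differs from the paper's treatment in that the paper offers no proof at all: Proposition~\ref{proposition:lsn} is simply attributed to Le Cam (\cite{lecam}), whose original argument obtains the dependence on $\max_i p_i$ by operator--semigroup methods. Your Stein--Chen route is the now-standard alternative, and the computation you give is sound: the identity $P\{S\in A\}-\pi_{\beta}(A)=\sum_i p_i^2\,E[\Delta g(S_i+1)]$ follows exactly as you say from $X_i\in\{0,1\}$ and independence of $X_i$ from $S_i$, and combined with $\sum_i p_i^2\le\alpha\beta$ and the Stein factor $\lVert\Delta g_A\rVert_\infty\le(1-e^{-\beta})/\beta$ it yields $d_{\mathrm{TV}}\le\alpha$, hence the stated bound with $C=2$ (the degenerate case $\beta=0$ being trivial). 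The one ingredient you do not prove, the uniform bound on $\Delta g_A$, is genuinely the crux --- as you note, it is what upgrades Le Cam's coupling bound $\alpha\beta$ to $\alpha$ --- so citing Barbour--Holst--Janson or Chen for it leaves your argument no less self-contained than the paper's bare citation, and arguably more informative, since it isolates exactly where the $\beta^{-1}$ gain comes from. You also correctly flag that $e^{-\lambda}$ in the statement should read $e^{-\beta}$. What each approach buys: the paper's citation is economical and consistent with its remark that only the qualitative dependence on $\max_i p_i$ matters; your argument gives an explicit constant and adapts directly to the multivariate version in Proposition~\ref{proposition:mult-lsn} via the multivariate Stein--Chen machinery.
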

Note that for all $k > n$, the probability $P\left\{\sum_{i}X_{i}=k \right\}$ is zero, but the elements of the sum are not. 

See \cite{lecam} for a proof. The important feature of the proposition
for us is not the explicit bound, but the fact that the closeness of
the approximation depends only on $\max p_{i}$. 

A similar result holds for multinomial variables.

\begin{proposition}\label{proposition:mult-lsn}
Let $X_{1},X_{2},\dotsc ,X_{n}$ be independent random variables each
taking values in the finite set $\{0,1,2,\dotsc ,K \}=\{0\} \cup [K]$, and for each
pair $i,j$ set $p_{i,j}=P\{X_{i}=j \}$. Let $\alpha =\max_{j\geq 1}
\max_{i}p_{i,j}$ and $\kappa_{j}=\sum_{i}p_{i,j}$, and for each $j$
define 
\[
	 T_{j}=\sum_{i=1}^{n}\mathbf{1}\{X_{i}=j \}.
\]
 Then there is a
function $C_{K} (\alpha)$ satisfying $\lim_{\alpha \downarrow 0}C
(\alpha)=0$ such that 
\[
	\sum_{m_{1}=0}^{\infty}\sum_{m_{2}=0}^{\infty}\dotsb
	\sum_{m_{K}=0}^{\infty} 
	\left\lvert P\{T_{j}=m_{j} \;\forall \, j\in [K]\}
	-\prod_{j=1}^{K}\kappa_{j}^{m_{j}}e^{-\kappa_{j}}/m_{j}! \right\rvert \leq C (\alpha).
\]
\end{proposition}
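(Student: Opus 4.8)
The plan is to deduce Proposition~\ref{proposition:mult-lsn} from the Bernoulli case, Proposition~\ref{proposition:lsn}, by a ``Poissonization by components'' argument combined with a coupling. First I would fix $j\in[K]$ and consider the Bernoulli indicators $Y_i^{(j)}=\mathbf{1}\{X_i=j\}$, which are independent across $i$ with $EY_i^{(j)}=p_{i,j}$. Applying Proposition~\ref{proposition:lsn} to $(Y_i^{(j)})_{i\le n}$ shows that each marginal count $T_j=\sum_i Y_i^{(j)}$ is within total variation $C\alpha$ of a Poisson$(\beta_j)$ law, since $\max_i p_{i,j}\le\alpha$. The content of the proposition, however, is the \emph{joint} statement with the product (i.e. independent) Poisson target, so the real work is decorrelating the $T_j$'s.

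The key step is a coupling. For each $i$, enlarge the probability space so that in addition to $X_i$ we have independent Poisson clocks: let $(Z_i^{(j)})_{j\in[K]}$ be independent Poisson random variables with $Z_i^{(j)}\sim\mathrm{Poisson}(p_{i,j})$, all independent over $i$ and $j$, and couple $X_i$ to them so that the event $\{X_i=j\}$ agrees with $\{Z_i^{(j)}=1,\ Z_i^{(j')}=0\ \forall j'\ne j\}$ except on an event of probability $O(\sum_j p_{i,j}^2)\le \alpha\sum_j p_{i,j}$. Summing over $i$, the vector $(T_1,\dots,T_K)$ differs from $(\sum_i Z_i^{(1)},\dots,\sum_i Z_i^{(K)})$ on an event of probability at most $\alpha\sum_{i,j}p_{i,j}=\alpha\sum_j\beta_j$. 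But $\sum_i Z_i^{(j)}$ is exactly Poisson$(\beta_j)$ and these are genuinely independent across $j$ by construction, so the coupled vector has precisely the target distribution. Hence the total-variation distance in the proposition is bounded by $\alpha\sum_{j=1}^K\beta_j$. This is not quite of the stated form, because $\beta_j$ is unbounded; I would absorb this by splitting into the regime where all $\beta_j$ are bounded by some large constant $M$ (giving a bound $\alpha K M$) and the complementary regime, where one notes that if some $\beta_j$ is huge the Poisson target itself is essentially concentrated far from $0$ and a crude second-moment / Chebyshev estimate on $T_j$ shows both distributions put negligible mass near small values; optimizing $M=M(\alpha)$ gives a bound $C_K(\alpha)\to 0$. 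Alternatively, and more cleanly, one can first reduce to the case $\beta_j\le 1$ for all $j$ by a truncation on $n$ (grouping the $X_i$ into blocks), which is the route I would actually write up.

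The step I expect to be the main obstacle is the coupling bound linking $\{X_i=j\}$ to the Poisson clocks uniformly in the $p_{i,j}$: one must check that the ``maximal coupling'' of a single categorical variable with success probabilities $(p_{i,0},\dots,p_{i,K})$ to a vector of independent $\mathrm{Poisson}(p_{i,j})$'s can be made to fail with probability $O(\alpha\sum_{j\ge1}p_{i,j})$ rather than merely $O(\sum_{j\ge1}p_{i,j})$. This requires exploiting that $1-e^{-p}-pe^{-p}=O(p^2)$ and that the probability two distinct clocks both fire is $O(p_{i,j}p_{i,j'})$, and then carefully collecting these quadratic terms so that an $\alpha=\max p_{i,j}$ can be factored out of each of them. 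Everything else — the application of Proposition~\ref{proposition:lsn} for the marginals, the additivity of independent Poissons, and the final truncation in $n$ — is routine.
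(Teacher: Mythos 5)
The paper states Proposition~\ref{proposition:mult-lsn} without proof (it appears right after the citation of \cite{lecam} for Proposition~\ref{proposition:lsn}, with the remark that the essential feature is that the bound depends only on $\max_i p_{i}$), so your proposal must be judged on its own merits; and it has a genuine gap exactly at that essential feature. Your coupling step is fine and standard: the maximal coupling of the indicator vector $(\mathbf{1}\{X_i=j\})_{j\in[K]}$ with independent $\mathrm{Poisson}(p_{i,j})$ clocks fails with probability $O\bigl((\sum_j p_{i,j})^2\bigr)\le K\alpha\sum_j p_{i,j}$ (your $\sum_j p_{i,j}^2$ should be $(\sum_j p_{i,j})^2$, harmless for fixed $K$), and summing over $i$ gives a total-variation bound of order $K\alpha\sum_j\beta_j$. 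That is the multivariate Le Cam bound, and, as you note, it is not of the required form because it grows with the $\beta_j$, i.e.\ with $n$. Neither of your proposed repairs removes this dependence. The Chebyshev/concentration argument is irrelevant to total variation: when $\beta_j$ is large, both $T_j$ and $\mathrm{Poisson}(\beta_j)$ are concentrated near $\beta_j$, but concentration says nothing about whether the two laws are close on that region --- two laws can be concentrated on the same interval and still be at total-variation distance $1$. And the blocking step does not justify ``wlog $\beta_j\le 1$'': if you partition $[n]$ into blocks with $\sum_{i\in\text{block}}p_{i,j}\le 1$ and bound the coupling error block by block, the errors add over the (at least $\max_j\beta_j$ many) blocks and you recover a bound of the same order $\alpha\sum_j\beta_j$. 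The convolution across blocks does produce smoothing, and that smoothing is precisely what would rescue the argument, but your estimate never uses it.

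In fact you have inverted the difficulty: extracting a factor $\alpha$ from each per-$i$ coupling is the routine part, while the real content of the proposition --- the point the paper itself emphasizes --- is uniformity in the $\beta_j$'s. Already for $K=1$ this uniformity is the Barbour--Hall ``magic factor'' refinement of Le Cam, $d_{TV}\le\min(1,\beta^{-1})\sum_i p_i^2\le\alpha$, and it cannot be reached by the naive coupling; one needs the Stein--Chen method or some other mechanism exploiting the smoothing from a large Poisson mean. (For instance, in the i.i.d.\ case one can condition on the total number of nonzero $X_i$'s: given that total, both the true law and the product-Poisson law distribute it over the categories by the same multinomial kernel, so the problem reduces to $d_{TV}\bigl(\mathrm{Binomial}(n,s),\mathrm{Poisson}(ns)\bigr)$ with $s=\sum_j p_j\le K\alpha$, which Proposition~\ref{proposition:lsn} bounds by $CK\alpha$; the non-identically-distributed case requires a further argument.) As written, your proof establishes the proposition only when $\sum_j\beta_j$ stays bounded, which is not the regime in which the paper uses it.
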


\section{Preliminaries: Symbolic Dynamics}\label{sec:symbolicDynamics}

\subsection{Shifts and suspension flows}\label{ssec:suspFlows}

The geodesic flow on the unit tangent bundle $T^1S$ of a compact,
negatively curved surface $S$ has a concrete representation as a
suspension flow over a shift of finite type. In describing this
representation, we shall follow (for the most part) the terminology
and notation of \cite{bowen:book}, \cite{series:symbDynFlows}, and
\cite{lalley:duke}. Let $\mathcal{A}$ be a finite alphabet and
$\mathcal{F}$ a finite set of finite words on the alphabet
$\mathcal{A}$, and define $\Sigma =\Sigma_{\mathcal{F}}$ to be the set
of doubly infinite sequences $\omega = (\omega_{n})_{n\in \zz{Z}}$
such that no element of $\mathcal{F}$ occurs as a subword of
$\omega$. The sequence space $\Sigma $ is given the metric
$d (\omega,y)=\exp \{-n (\omega,y) \}$ where $n (\omega,y)$ is the
minimum nonnegative integer $n$ such that $\omega_{j}\not =y_{j}$ for
$j= n$ or $j=-n$.  For each nonnegative integer $m$ and each $\omega
\in \Sigma$, define the \emph{cylinder set} $\Sigma_{m}(\omega) $ to
be the set of all $\omega'\in\Sigma$ that agree with $\omega$ in all
coordinates $j$ such that $|j| \leq m$; equivalently,
\begin{equation}
  \label{eq:cylinder}
  \Sigma_{m}(\omega)= \xset {\omega' \in \Sigma \,:\,
    d(\omega,\omega')< e^{-m}}.
\end{equation}
The forward shift $\sigma :\Sigma \rightarrow \Sigma$ is
known as a (two-sided) \emph{shift of finite type}.\footnote{Bowen
  \cite{bowen:book} requires that the elements of the set
  $\mathcal{F}$ all be of length 2. However, any shift of finite type
  can be ``recoded'' to give a shift of finite type obeying Bowen's
  convention, by replacing the original alphabet $\mathcal{A}$ by
  $\mathcal{A}^{m}$, where $m$ is the length of the longest word in
  $\mathcal{F}$, and then replacing each sequence $\omega $ by the
  sequence $\bar{\omega}$ whose entries are the successive length-$m$
  subwords of $\omega$. In Series' \cite{series:symbDynFlows} symbolic
  dynamics for the geodesic flow on a closed hyperbolic surface, the
  alphabet $\mathcal{A}$ is the set of natural generators for the
  fundamental group $\pi_{1} (S)$ of the surface $S$, and the
  forbidden subwords $\mathcal{F}$ are gotten from the relators of
  $\pi_{1} (S)$.}

For any continuous function $F:\Sigma \rightarrow (0,\infty)$ on
$\Sigma$, define the \emph{suspension space} $\Sigma_{F}$
by
\[
	\Sigma_{F} :=\{(\omega,t)\,:\, \omega\in \Sigma \;\; \text{and} \; 0\leq
	t\leq F (\omega)\} ,
\]
with points $(\omega,F (\omega))$ and $(\sigma \omega,0)$
identified. The metric $d$ on the sequence space $\Sigma$ induces a
metric $d_{\textsc{Taxi}}$ on $\Sigma_{F}$, the ``taxicab''
metric. (Roughly, the distance between any two points $(\omega, t)$
and $(\omega',t')$ in $\Sigma_{F}$ is the length of the shortest
``path'' between them consisting of alternating ``horizontal'' and
``vertical'' segments.  See \cite{bowen-walters} for the formal definition.) The
\emph{suspension flow} with height function $F$ is the flow $\phi_{t}$
on $\Sigma_{F}$ whose orbits proceed up vertical fibers
\begin{displaymath}
  \mathcal{F}_{\omega}:=\{(\omega,s)\,:\,0\leq s\leq F (\omega) \}
\end{displaymath}
at speed $1$, and upon reaching the ceiling at $(\omega,F (\omega))$
jump instantaneously to $(\sigma \omega,0)$. If the height function
$F: \Sigma \rightarrow \mathbb{R}$ is H\"{o}lder continuous with
respect to the metric $d$, then the suspension flow $\phi_{t}$ is
H\"{o}lder continuous with respect to the metric $d_{\textsc{Taxi}}$:
in particular, there exists $\alpha>0$ such that
\begin{equation}\label{eq:suspFlowHolder}
  d_{\textsc{Taxi}}(\phi_{t}(\omega,0), \phi_{t}(\omega',0))\leq
  e^{\alpha |t|}d(\omega,\omega') \quad \textrm {for all} \quad
  \omega,\omega'\in\Sigma \; \textrm {and} \; t \in \mathbb{R}.
\end{equation}

There is a bijective correspondence between invariant probability
measures $\mu^{*}$ for the flow $\phi_{t}$ and shift-invariant measures $\mu$ on
$\Sigma$. This correspondence can be specified as follows: for any
continuous function $g:\Sigma_{F} \rightarrow \zz{R}$, 
\begin{equation}\label{eq:measure-transfer}
	\int g\, d\mu^{*}=\int_{\Sigma}\int_{0}^{F (\omega)} g (\omega,s) \,ds
	\,d\mu (\omega) /\int_{\Sigma}F\,d\mu .
\end{equation}
If $\mu$ is ergodic  for the shift $(\Sigma ,\sigma)$ then $\mu^{*}$ is
ergodic for the flow $(\Sigma_{F},\phi_{t})$; and if $\mu$ is mixing
for the shift then  $\mu^{*}$ is mixing for the flow {provided} that the
height function $F$ is not cohomologous to a function $F'$ that takes
values in $b\zz{Z}$ for some $b>0$. (Two functions $F,F'$ are
\emph{cohomologous} if their difference is a \emph{coboundary}
$G-G\circ\sigma$.) By Birkhoff's
theorem, for any
ergodic  probability measure $\mu$,
\[
	\lim_{n \rightarrow \infty}n^{-1}\sum_{j=0}^{n-1}
	F\circ\sigma^{j} =\int_{\Sigma}F\,d\mu \quad \text{almost
	surely};  
\]
thus, under $\mu^{*}$, almost every orbit makes roughly $T/\int F
\,d\mu$ visits to the base $\Sigma \times \{0 \}$ by time $T$, when
$T$ is large.

\subsection{Symbolic dynamics for the geodesic flow}\label{ssec:symbDynGF}

The following proposition is a special case of the main result of
\cite{ratner:markov} (see also \cite{bowen:symbDyn}), as the geodesic
flow on a compact, negatively curved surface is an Anosov flow.

\begin{proposition}\label{proposition:boundaryCorrespondence}
For any compact, negatively curved surface
$(S, \varrho)$ with $C^{\infty}$ Riemannian metric $\varrho$, there
exist a topologically mixing shift $(\Sigma ,\sigma)$ of 
finite type, a suspension flow
$(\Sigma_{F},\phi_{t})$ over the shift with H\"{o}lder continuous
height function $F$, and a surjective,
H\"{o}lder-continuous mapping $\pi:\Sigma_{F} \rightarrow T^1S$ such that
$\pi$ is a semi-conjugacy with the geodesic flow $\gamma_{t}$ on
$T^1S$, i.e.,
\begin{equation}
  \label{eq:semi-conjugacy}
	\pi \circ \phi_{t}=\gamma_{t} \circ \pi \quad \text{for all}\;\;
        t\in \zz{R}.
\end{equation}
\end{proposition}

In the special case where $\varrho$ is a hyperbolic (constant
curvature) Riemannian metric, a much more explicit symbolic dynamics
was constructed by  Series:
see \cite{series:symbDynFlows}, especially Th. 3.1, and also
\cite{bowen-series}. In this symbolic dynamics, the sequence space
$\Sigma$ is mapped to a subset of  $\partial \mathbb{D}\times \partial
\mathbb{D}$, where $\partial
\mathbb{D} $ is the ideal    boundary of the Poincar\'{e} disk, in
such a way that every vertical fiber
$\mathcal{F}_{\omega}$  of the suspension flow is mapped to a segment
of the hyperbolic geodesic in $\mathbb{D}$ whose endpoints are gotten
from the boundary correspondence. Series' symbolic dynamics can be
extended to the variable curvature case using  the \emph{Conformal
  Equivalence Theorem} (\cite{schoen-yau}, 
Theorem V.1.3) and the structural stability theorem for Anosov
flows. This more explicit symbolic dynamics will not be needed in the
analysis below. However, we will need the following fact (see
\cite{paulin-pollicott-schapira}, ch.~7).

\begin{proposition}
  \label{proposition:LiouvilleGibbs}
  Under the hypotheses of
  Proposition~\ref{proposition:boundaryCorrespondence}, the pullback
  $\lambda^{*}:=\mu_{L}\circ \pi^{-1}$ of the normalized
  Liouville measure $\mu_{L}$ on $T^{1}S$ is Gibbs, that is, it corresponds to a
  Gibbs state $\lambda$ for the shift via the identity  \eqref{eq:measure-transfer}.
\end{proposition}

\subsection{Regenerative representation of Gibbs
states}\label{ssec:regeneration} 

Gibbs states with H\"{o}lder continuous potentials enjoy strong
exponential mixing properties (e.g., the ``exponential cluster
property'' 1.26 in \cite{bowen:book}, ch.~1). We shall make use of an
even stronger property, the \emph{regenerative representation} of a
Gibbs state established in \cite{lalley:regeneration} (cf. also
\cite{comets-et-al}). This representation is
most usefully described in terms of the stationary process governed by
the Gibbs state. Let $\mu$ be a Gibbs state with H\"{o}lder continuous
potential function $f:\Sigma  \rightarrow \mathbb{R}$, where $\sigma
:\Sigma  \rightarrow \Sigma$ is a topologically mixing shift of finite
type, and let $X_{n}:\Sigma  \rightarrow \mathcal{A}$ be the
coordinate projections on $\Sigma$, for $n\in \zz{Z}$. The sequence
$(X_{n})_{n\in \zz{Z}}$, viewed as a stochastic process on the
probability space $(\Sigma ,\mu)$, is a stationary process that we
will henceforth call a \emph{Gibbs process}. 

The regenerative representation relates  the class of Gibbs processes to
another class of stationary processes, called \emph{list processes}
(the term used by \cite{lalley:regeneration}). A list process is a stationary,
positive-recurrent Markov chain $(Z_{n})_{n\in \zz{Z}}$ with state
space $\cup_{k\geq 1}\mathcal{A}^{k}$  and stationary distribution
$\nu$ that obeys the following transition rules: first,
\begin{equation}\label{eq:list-1}
	P (Z_{n+1}= (\omega_{1},\omega_{2},\dotsc ,\omega_{m})\,|\,
	Z_{n}= (\omega '_{1},\omega '_{2},\dotsc , \omega '_{k}))=0
\end{equation}
unless either $m=1$ or $m=k+1$ and $\omega_{i}=\omega '_{i}$ for each
$1\leq i\leq k$; and second, for every letter $\omega_{1}$ and every
word $\omega '_{1}\omega '_{2}\dotsb \omega '_{m}$,
\begin{equation}\label{eq:list-2}
	P (Z_{n+1}=\omega_{1}\,|\, Z_{n+1}\in \mathcal{A}^{1} \;
	\text{and}\; Z_{n}= (\omega '_{1},\omega '_{2},\dotsb ,\omega
	'_{m})) =\nu ((\omega_{1}))/\nu (\mathcal{A}^{1}).
\end{equation}
Thus, the process $(Z_{n})_{n\in \zz{Z}}$ evolves by either adding one
letter to the end of the list or erasing the entire list and
beginning from scratch.  Furthermore, by \eqref{eq:list-2}, at any
time when the list is erased, the new 1-letter word chosen to begin
the next list  is independent of the past history of the entire process.

For any list process define the
\emph{regeneration times} $0=\tau_{0}<\tau_{1}<\tau_{2}<\dotsb$ by
\begin{align*}
	\tau_{1}&=\min \{n\geq 1\,:\, Z_{n}\in \mathcal{A}^{1}\};\\
	\tau_{m+1}&=\min \{n\geq 1+\tau_{m}\,:\, Z_{n}\in \mathcal{A}^{1}\}.
\end{align*}
By condition \eqref{eq:list-2}, the random variables
$\tau_{m+1}-\tau_{m}$ are independent, and    for
$m\geq 1$ are identically distributed, as are the \emph{excursions} 
\[
	(Z_{\tau_{m}+1},Z_{\tau_{m}+2},\dotsc ,Z_{\tau_{m+1}}).
\]
Denote by $\pi :\cup_{k\geq 1}\mathcal{A}^{k} \rightarrow \mathcal{A}$
the projection onto the \emph{last} letter.
 
\begin{proposition}\label{proposition:regeneration}
If $(X_{n})_{n\in \zz{Z}}$ is a Gibbs process then there is a list process
$(Z_{n})_{n\in \zz{Z}}$ such that the projected process $(\pi
(Z_{n}))_{n\in \zz{Z}}$ has the same joint distribution as the Gibbs
process $(X_{n})_{n\in \zz{Z}}$ . Thus, the random sequence obtained
by concatenating the successive excursions $W_{m}:=Z_{\tau_{m}}$,
i.e.,
\[
	W_{1}\cdot W_{2}\cdot W_{3}\cdot\dotsb ,
\]
has the same distribution as the sequence $\{X_{n} \}_{n \geq 0}$.
Moreover, the list process can be chosen
in such a way that the excursion lengths $\tau_{m+1}-\tau_{m}$ satisfy 
\begin{equation}\label{eq:exp-regeneration}
	P (\tau_{m+1}-\tau_{m}\geq n)\leq C\alpha^{n}
\end{equation}
for some $0<\alpha <1$ and $C<\infty$ not depending on either $m$ or
$n$. 
\end{proposition}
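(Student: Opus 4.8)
The plan is to establish Proposition~\ref{proposition:regeneration} by invoking the Kac--Feldman--Pitman style renewal construction for Gibbs processes with H\"older potentials, which is precisely what \cite{lalley:regeneration} provides; the task here is to (a) recall why a Gibbs process admits a ``regeneration scheme'' of the stated form, and (b) package it as a list process with the asserted transition rules and exponential tail bound. First I would reduce to the one-sided Gibbs process $(X_n)_{n\ge 0}$ on $\Sigma^+$. A Gibbs state with H\"older potential $f$ has the property that its conditional distributions $\mu(X_n=a \mid X_{n-1},X_{n-2},\dots)$ are themselves H\"older-continuous in the conditioning sequence, with a geometric modulus of continuity: there exist $C<\infty$, $0<\theta<1$ with
\[
	\Bigl\lvert \mu(X_n=a\mid X_{n-1}^{-\infty}) - \mu(X_n=a\mid X_{n-1},\dots,X_{n-k}) \Bigr\rvert \le C\theta^{k}
\]
uniformly. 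This is the ``chains with complete connections'' description alluded to in the footnote, and it is the only analytic input needed.

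Next I would run the coupling-from-the-past / maximal-coupling argument that produces the list process. The idea: think of generating $X_n$ sequentially; the conditional law of $X_n$ given the past depends, up to error $C\theta^k$, only on the last $k$ symbols. So couple the true conditional law with a reference law $\nu(\cdot\mid \mathcal{A}^1)/\nu(\mathcal{A}^1)$ (the stationary one-symbol marginal, renormalized) in such a way that with probability bounded below the two agree and the new symbol is drawn \emph{independently of the entire past}. When that coupling event occurs at time $n$, declare $n$ a regeneration time, reset the ``list'' $Z_n$ to the single new symbol, and otherwise extend $Z_n$ by appending $X_n$. By construction $Z_n$ records the block of symbols since the last regeneration, the projection $\pi(Z_n)$ onto the last coordinate recovers $X_n$, and the transition rules \eqref{eq:list-1}--\eqref{eq:list-2} hold essentially by definition of the scheme. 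Stationarity and positive recurrence of $(Z_n)$ follow from stationarity of $(X_n)$ together with the fact that regenerations occur with positive frequency. To get \eqref{eq:exp-regeneration}, observe that the probability of \emph{no} regeneration over a block of $n$ steps is bounded by a product of per-step failure probabilities; because the per-step success probability is bounded below by a constant $p_0>0$ (the H\"older estimate guarantees the true conditional law and the reference law overlap by at least a fixed amount once one has conditioned on enough history, and the geometric decay $C\theta^k$ lets one absorb the dependence on history), one obtains $P(\tau_{m+1}-\tau_m\ge n)\le (1-p_0)^{n-O(1)}$, which is \eqref{eq:exp-regeneration} with $\alpha=1-p_0$ after adjusting $C$.

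The main obstacle is the step where one must argue that the coupling can be engineered so that the \emph{refreshed} symbol at a regeneration time is genuinely independent of the past — not merely that regenerations happen often. The subtlety is that the conditional law of $X_n$ given the past is never \emph{exactly} equal to the reference law, so a naive maximal coupling would leave residual dependence. The resolution in \cite{lalley:regeneration} is to build the scheme with a two-stage randomization: at each step flip a coin with success probability equal to the total-variation overlap between the current conditional law and the reference law; on success, draw from the overlap measure (which, after the H\"older bookkeeping, can be taken to be a fixed multiple of $\nu(\cdot\mid\mathcal{A}^1)$ and hence past-independent), and on failure draw from the appropriately renormalized residual. Verifying that the success draw is exactly distributed as $\nu(\cdot)/\nu(\mathcal{A}^1)$ and independent of everything prior — which is what licenses calling these times ``regenerations'' and gives the i.i.d. excursion structure — is the delicate point; everything else (exponential tails, stationarity, the projection identity) is then routine. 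Since all of this is carried out in detail in \cite{lalley:regeneration}, I would cite that construction and confine the argument here to the reduction and the statement of the per-step overlap bound derived from the H\"older property of the Gibbs potential.
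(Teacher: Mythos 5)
Your proposal is correct and follows essentially the same route as the paper: the paper's entire ``proof'' is the citation to \cite{lalley:regeneration}, Th.~1 (or \cite{comets-et-al}, Th.~4.1), together with the observation that H\"older continuity of the potential is equivalent to the exponential decay of the continuity rates $\gamma_m$ for the conditional probabilities, which is exactly the reduction you make before deferring the splitting/coupling construction to the same reference. Your added sketch of the two-stage randomization and the $(1-p_0)^{n}$ tail bound is a faithful outline of what the cited theorem provides, so there is no substantive divergence from the paper's treatment.
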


See \cite{lalley:regeneration}, Th.~1, or \cite{comets-et-al},
Th.~4.1. (The  article \cite{lalley:regeneration} uses the (older) term \emph{chain with
  complete connections} for a Gibbs process, and a different (but
equivalent) definition than that given in \cite{bowen:book}. Moreover,
\cite{lalley:regeneration} considers only the case where the
underlying shift is the full shift on the symbol set $\mathcal{A}$,
although the proof extends routinely to the general case. See
\cite{li-naud-pan} for details.)

\section{Theorem~\ref{theorem:random-local} Proof:
  Strategy}\label{sec:heuristics}

We shall use the symbolic dynamics outlined in
section~\ref{ssec:symbDynGF} to translate the weak convergence problem
to a problem involving the Gibbs state $\lambda$ corresponding to the
pullback $\lambda^{*}$ of Liouville measure to the suspension space
$\Sigma_{F}$. Recall
(cf. Proposition~\ref{proposition:boundaryCorrespondence}) that the
projection $\pi:\Sigma_{F}\rightarrow T^{1}S$ provides a
semi-conjugacy \eqref{eq:semi-conjugacy} between the suspension flow
$\phi_{t}$ and the geodesic flow $\gamma_{t}$; thus, each segment
of the suspension flow projects (via the mapping
$p\circ\pi$, where $p:T^{1}S\rightarrow S$ is the natural projection)
to a geodesic segment of the same length, and in
particular, each fiber $\mathcal{F}_{\omega}$ of the suspension space
$\Sigma_{F}$ projects to a geodesic segment of length $F(\omega)$.  By
Birkhoff's ergodic theorem, for $\lambda-$almost every $\omega \in \Sigma$
the length of the orbit segment
\begin{displaymath}
  \mathcal{F}_{\omega}\cup \mathcal{F}_{\sigma \omega}\cup \cdots \cup
  \mathcal{F}_{\sigma^{n-1}\omega}
\end{displaymath}
divided by $n$ converges as $n \rightarrow\infty$ to $E_{\lambda}F$.
We will show (cf. Proposition~\ref{proposition:measure-asymptotics}
below) that for a random geodesic the expected number of visits to the
region $D(x;\alpha T^{-1})$ by time $T$ is of order $1$, and
(cf. Proposition~\ref{proposition:no-quick-returns}) that  the
expected number of visits in a time interval of length $\varepsilon T$
can be made arbitrarily small by taking $\varepsilon$
small. Therefore, the intersection  of a length-$T$ random
geodesic  with  $D(x,\alpha T^{-1})$ is, with probability
approaching one, identical to the intersection with the geodesic segment
\begin{equation}
  \label{eq:n}
  p\circ\pi(\cup _{i=0}^{n-1}\mathcal{F}_{\sigma^{i}\omega}) \quad
         \textrm {where} \quad n:=n (T)= [T/E_{\lambda}F].
 \end{equation}
Henceforth, we will use the abbreviation $n=n(T)$.

Denote by $I_{n} (x,\omega)$ the intersection of the geodesic segment
$p\circ\pi(\cup _{i=0}^{n-1}\mathcal{F}_{\sigma^{i}\omega}) $ with the
neighborhood $D(x;\alpha T^{-1})$, and by
$J_{n}(x, \omega)$ the pullback to a finite collection of chords of
the ball $B(0, \alpha)$ in the tangent space $T_{x}S$ (cf. the
discussion preceding the statement of
Theorem~\ref{theorem:random-local}).  Our goal is to prove that, for
any fixed $x\in \mathcal{S}$, the sequence of line processes $J_{n}$
converges in law to a Poisson line process on $B (0,\alpha)$. For this
we will use the criterion of
Proposition~\ref{proposition:sufficientCondition}.

For any pair $A,B$ of non-overlapping boundary arcs of
$\partial B (0,\alpha)$, define $L_{A,B}$ to be the set of
\emph{oriented} line segments from $A$ to $B$, and let
$N_{A,B} (\omega)$ be the number of oriented chords in
$J_{n} (x,\omega)$ from boundary arc $A$ to boundary arc $B$ in
$B(0,\alpha)$, equivalently, the number of oriented geodesic segments
in the collection $I_{n} (x;\omega )$ that cross the target
neighborhood $D(x;\alpha T^{-1})$ from (the image of) arc
$A$ to (the image of) arc $B$. (Recall that $B (0,\alpha)$ is
identified with the neighborhood $D(x,\alpha T^{-1})$ by the
scaled exponential mapping. Henceforth, for any pair of arcs $A,B$ in
$\partial B(0 , \alpha)$ we shall denote by $A^{T},B^{T}$ the
corresponding boundary arcs of $D(x,\alpha T^{-1})$.) The
counts $N_{A,B}$ depend on $n=[T/E_{\lambda}F]$ and $\omega$, but to
reduce notational clutter we shall suppress this dependence.  Observe
that the number of \emph{undirected} crossings $N_{\{A,B \}}$
(cf. equation~\eqref{eq:crossingCounts}) is given by
\[
	N_{\{A,B \}}=N_{A,B}+N_{B,A},
\]
and consequently $EN_{\{A,B \}}=EN_{A,B}+EN_{B,A}$. Since 
the sum of independent Poisson random variables is Poisson, 
to prove that in the $n \rightarrow \infty$ limit the random variable
$N_{\{A,B\}}$ becomes Poisson, it suffices to show that the
\emph{directed} crossing counts $N_{A,B}$ become Poisson. Thus,
 our objective now is to prove the following assertion, which, by
Proposition~\ref{proposition:sufficientCondition}, will
imply Theorem~\ref{theorem:random-local}.

\begin{proposition}\label{proposition:reformulation}
For any finite collection $\{(A_{i},B_{i}) \}_{i\leq r}$ of pairs of
non-overlapping closed boundary arcs of $B (0,\alpha)$ such that the
sets $L_{A_{i},B_{i}}$ are pairwise disjoint, and for any choice of
nonnegative integers $k_{i}$,
\begin{equation}\label{eq:reformulation}
	\lim_{n \rightarrow \infty} \lambda \left\{\omega \,:\,
	N_{A_{i},B_{i}} (\omega )=k_{i} \; \forall \,i \right\}
	 =\prod_{i=1}^{r} \frac{( \kappa 
	\beta_{A_{i},B_{i}}/2)^{k_{i}}}{k_{i}!}  e^{- \kappa \beta_{A_{i},B_{i}}/2} 
\end{equation}
where $\beta_{A,B}$ is  defined by
equation~\eqref{eq:beta} (with $D=B(0,a)$) and $\kappa=1/\textrm {area}(S)$.
\end{proposition}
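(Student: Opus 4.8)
The plan is to count directed crossings of the target disk $D(x;\alpha T^{-1})$ by decomposing the orbit of the suspension flow into the excursions furnished by the regenerative representation (Proposition~\ref{proposition:regeneration}), and then apply the multivariate law of small numbers (Proposition~\ref{proposition:mult-lsn}). Concretely, choose $\omega\in\Sigma$ according to the Gibbs state $\lambda$ and write the first $n=n(T)$ letters of $\omega^{+}$ as a concatenation $W_{1}\cdot W_{2}\cdots$ of i.i.d.\ excursions, with exponentially small excursion lengths by \eqref{eq:exp-regeneration}. Group the $n$ base crossings of the fundamental polygon into $M\approx n/E(\tau_{m+1}-\tau_{m})$ blocks, one per excursion. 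For each block $m$ and each pair index $i$, let $X_{m}^{(i)}$ be the number of geodesic arcs produced by block $m$ that cross $D(x;\alpha T^{-1})$ from $A_{i}$ to $B_{i}$; set $X_{m}=(X_{m}^{(i)})_{i\le r}$, taking values in a finite set once we truncate at a large cutoff $K$ (the probability that any single excursion contributes more than $K$ crossings is $o(1/M)$, since excursions are short and hitting the tiny target twice in one short excursion is very unlikely — this is exactly the ``no short return'' input the heuristics section flagged). Because the excursions are i.i.d., the $X_{m}$ are i.i.d., so Proposition~\ref{proposition:mult-lsn} applies provided $\max_{m}P(X_{m}^{(i)}=j)\to 0$ for $j\ge 1$ and the totals $\beta_{i}:=\sum_{m}P(X_{m}^{(i)}=1)$ converge to the right constants.

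The heart of the argument is therefore the mean computation: I must show
\[
	EN_{A_{i},B_{i}}=\sum_{m=1}^{M}EX_{m}^{(i)}\longrightarrow \tfrac12\kappa_{g}\,\beta_{A_{i},B_{i}}
	\qquad\text{as }T\to\infty,
\]
together with the negligibility of multiple crossings within a block. For the mean, I would use the measure-transfer formula \eqref{eq:measure-transfer}: $EN_{A_{i},B_{i}}$ equals $n$ times the $\lambda$-probability that a single base crossing produces a geodesic arc crossing $D(x;\alpha T^{-1})$ from $A_{i}$ to $B_{i}$, and by \eqref{eq:measure-transfer} this single-crossing probability is, up to $1+o(1)$, the $\lambda^{*}$-measure of the set of unit tangent vectors whose forward geodesic of length $O(\alpha T^{-1})$ is such a directed chord, divided by $E_{\lambda}F$. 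Since $\lambda^{*}$ is normalized Liouville measure, that numerator is computed by integrating the Liouville density over directed line segments through a disk of radius $\alpha T^{-1}$ that enter through $A_{i}$ and exit through $B_{i}$; rescaling by $T$ turns this into the Euclidean integral that defines $\beta_{A_{i},B_{i}}$ (this is essentially the kinematic/Crofton computation underlying equation~\eqref{eq:beta}), and the factor $\operatorname{area}(S)^{-1}=\kappa_{g}$ comes from the Liouville normalization while $E_{\lambda}F$ contributes the remaining constant so that $n\cdot(\text{prob})\to \tfrac12\kappa_{g}\beta_{A_{i},B_{i}}$. For the second moment / multiple-crossing bound, I would estimate the $\lambda$-probability that two base crossings within a single (short) excursion both hit $D(x;\alpha T^{-1})$: conditioning on the first hit, the geodesic must return to a disk of radius $\alpha T^{-1}$ within $O(1)$ units of time, an event of probability $O(T^{-1})$ uniformly, so the contribution of such pairs is $O(M\cdot T^{-1}\cdot T^{-1}\cdot\text{(bounded)})=o(1)$; the same bound shows $\max_{m}P(X_{m}^{(i)}=j)=O(T^{-2})\to 0$ for $j\ge 2$ and $\max_m P(X_m^{(i)}=1)=O(T^{-1})\to 0$, so the cutoff $K$ can be removed and the $\alpha$-parameter of Proposition~\ref{proposition:mult-lsn} tends to $0$.

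Finally, I would verify the independence requirement in Proposition~\ref{proposition:sufficientCondition}: when the sets $L_{A_{i},B_{i}}$ are pairwise disjoint, no single geodesic arc can be counted in two different $N_{A_{i},B_{i}}$, so within each block the vector $X_{m}$ has at most one nonzero coordinate except on the negligible multiple-hit event; this is precisely the multinomial setup of Proposition~\ref{proposition:mult-lsn}, whose conclusion is the joint Poisson limit \eqref{eq:reformulation}. Assembling: the i.i.d.\ block decomposition plus Proposition~\ref{proposition:mult-lsn} gives that $(N_{A_{i},B_{i}})_{i\le r}$ converges jointly to independent Poissons with means $\tfrac12\kappa_{g}\beta_{A_{i},B_{i}}$, which is exactly \eqref{eq:reformulation}, and then Proposition~\ref{proposition:sufficientCondition} (via $N_{\{A,B\}}=N_{A,B}+N_{B,A}$) yields Theorem~\ref{theorem:random-local}. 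The step I expect to be the main obstacle is controlling the error in replacing ``geodesic arc crossing the topological ball $D(x;\alpha T^{-1})$ counted by one base crossing'' with ``straight chord of $B(0;\alpha)$ weighted by Liouville density'' uniformly over the disjoint arc pairs — i.e., making the Birkhoff/shrinking-target heuristic of section~\ref{ssec:liouville-shrinking} into a genuine estimate — and this is presumably what the later sections \ref{sec:asymptotics}, \ref{sec:decomposition}, \ref{sec:no-prefixes}, \ref{sec:final-step} are devoted to.
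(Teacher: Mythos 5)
Your overall architecture (regenerative representation plus the multivariate law of small numbers, with the mean supplied by a Liouville/measure-transfer computation) matches the paper's, and your mean computation is essentially Proposition~\ref{proposition:measure-asymptotics}. But there is a genuine gap at the step ``because the excursions are i.i.d., the $X_{m}$ are i.i.d.'' The event that a given base crossing of $\mathcal{P}$ produces an arc crossing $D(x;\alpha T^{-1})$ from $A_{i}$ to $B_{i}$ is \emph{not} a function of the single excursion containing that crossing: symbolically, whether the geodesic determined by $\omega$ hits a target of radius $\alpha T^{-1}$ through prescribed boundary arcs is determined only by knowing the ideal endpoints $\xi_{\pm}(\omega^{\pm})$ to precision $O(T^{-1})$, which by Proposition~\ref{proposition:boundaryCorrespondence}(D) requires on the order of $\log T$ symbols of both the past and the future (the paper uses $(\log n)^{2}$ to be safe). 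The excursion lengths have exponentially decaying tails, so a typical excursion has length $O(1)$ and even the longest among the first $n$ has length $O(\log n)$; hence the crossing count you attach to one excursion depends on many neighbouring excursions, your variables $X_{m}^{(i)}$ are neither measurable with respect to the $m$-th excursion nor independent, and Proposition~\ref{proposition:mult-lsn} cannot be applied as you invoke it. This is exactly the difficulty sections~\ref{sec:decomposition}--\ref{sec:final-step} are built to handle: crossing events are first replaced, up to $\lambda$-measure $o(n^{-r})$ (Proposition~\ref{proposition:decomposition}, Corollary~\ref{corollary:magicSubwordCounts}), by occurrences of ``magic subwords'' of length $2(\log n)^{2}$, and the excursions are then grouped into superblocks $\tilde{W}_{k}$ of $[\log n]^{3}$ consecutive words each, long enough that a magic subword lies inside a single superblock with high probability; the indicators $Y(\tilde{W}_{k})$ of ``this superblock contains a magic subword'' genuinely are independent, and only to these is the law of small numbers applied.

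Two further points would remain even after repairing the blocking. First, one must rule out a magic subword straddling two consecutive superblocks and rule out two magic subwords in one superblock; the paper does this with the quick-return estimate of Proposition~\ref{proposition:quickReturns} (no re-entry within time $(\log T)^{\kappa}$, not merely within $O(1)$ time as in your sketch), the large-deviation control of block lengths (Claim~\ref{claim:no-long-blocks}), and an expected-count argument (Claim~\ref{claim:no-straddles}); your claim of an ``$O(T^{-1})$ uniformly'' conditional return probability is plausible but unproved, and the paper avoids conditioning altogether by using invariance of Liouville measure under the flow. Second, the replacement of the geometric crossing event by a symbolic event must be shown to fail only on geodesics passing very near the endpoints of the arcs $A_{i},B_{i}$ (the type~(iii) pairs in Proposition~\ref{proposition:decomposition}), a set of negligible measure; this is the error you flagged at the end, and it is indeed handled there, but it is separate from, and does not repair, the independence gap above.
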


Note that for fixed boundary arcs $A,B$ the constants
$\beta_{A,B}=\beta_{A,B}(\alpha)$ are proportional to  $\alpha$, because
the function $\psi =\psi_{A,B}$ in \eqref{eq:beta} is proportional to
$\alpha$. See Figure~\ref{fig:Strip at angle theta}.

The proof of Proposition~\ref{proposition:reformulation} will be
accomplished in four stages, as follows.

First, we will prove in section~\ref{sec:no-prefixes} that for any
positive function $f(T)$ satisfying
\begin{displaymath}
  \lim_{T \rightarrow  \infty}f(T)/T=0,
\end{displaymath}
\begin{enumerate}
\item [(a)] the probability that a random geodesic ray enters the
  neighborhood $D(x, \alpha T^{-1})$ before time $f(T)$
  converges to $0$ as $T \rightarrow\infty$ (meaning there are no quick entries), and
\item [(b)] the probability that a random geodesic ray enters
  $D(x, \alpha T^{-1})$ before time $T$ and then re-enters
  (after having exited) within time $f(T)$ also converges to $0$ (meaning there are no quick re-entries).
\end{enumerate}
This will justify the replacement of the random length-$T$ geodesic
segment in the statement of Theorem~\ref{theorem:random-local}
 by the geodesic segment \eqref{eq:n} above, and will
also ultimately be used to partition this segment into nearly
independent blocks.

Second, define $\Sigma (A,B;T)$ to be the set of all sequences
$\omega \in \Sigma$ such that the geodesic segment
$p\circ\pi(\mathcal{F}_{\omega})$ intersects
$D(x, \alpha T^{-1})$ in a geodesic segment with terminal
endpoint in the boundary arc $B^{T}$, and either coincides with or
extends to a geodesic crossing from boundary arc $A^{T}$ to boundary
arc $B^{T}$. (Note that if the image $p\circ\pi (\omega,0)$ of the
base point $(\omega,0)$ lies in the interior of
$D(x, \alpha T^{-1})$ then the intersection will only be a
partial crossing.)  By assertions (a) and (b) above, the event
$\xset {\omega \,:\, N_{A,B}(\omega)=k}$ coincides (up to a set of
measure $\rightarrow 0$ as $T \rightarrow \infty$) with the set of
sequences $\omega \in \Sigma$ such that
\begin{displaymath}
  \sum_{i=0}^{n-1}\mathbf{1} _{\Sigma(A,B;T)}(\sigma^{i}\omega)=k,
\end{displaymath}
that is, sequences whose forward $\sigma-$orbits
$(\sigma^{i}\omega)_{i \geq 0}$ make exactly $k$ visits to the set
$\Sigma(A,B;T)$ for $i \leq n-1$.  We will prove, in
section~\ref{sec:asymptotics}), that the set $\Sigma(A,B;T)$ has
$\lambda-$measure satisfying
\begin{equation}\label{eq:measure-asymptotics}
	\lim_{T \rightarrow \infty} T\lambda (\Sigma (A,B;T))=% \frac{1}{2\pi \,\text{area} (S)} 
	\frac{1}{2} \kappa \beta_{A,B}E_{\lambda}F.
\end{equation}

Third, in section~\ref{sec:decomposition}, we will show that the set
$\Sigma (A,B;T)$ can be represented approximately as a finite union of
cylinder sets $\Sigma_{m}(\omega)$. This will be done in such a way
that the lengths of the words defining the cylinder sets satisfy
$m= (\log n)^{2} =C' (\log T)^{2}$. It will then follow that the set
$\xset {\omega \,:\, N_{A,B}(\omega)=k}$ is (approximately) the set of
all sequences $\omega \in \Sigma$ whose first $n$ letters contain
exactly $k$ occurrences of one of the length-$2m+1$ sub-words
\begin{equation}\label{eq:magic-subword}
	\omega_{-m}\omega_{-m+1} \cdots \omega_{m}
\end{equation}
that define the cylinder sets $\Sigma_{m}(\omega)$.

Finally, in section~\ref{sec:final-step}, we will use the results of
steps 1, 2, and 3 to show that the number $N_{A,B} $ of crossings
through arcs $A,B$ on $\partial D(x;\alpha T^{-1})$ equals
(with high probability) the number of length-$(\log T)^{2}$ blocks
that contain one of the magic subwords, and (using the regeneration
theorem of section \ref{ssec:regeneration}) that these occurrence
events are independent small-probability events. Furthermore, we will
show that for distinct pairs $(A_{i},B_{i})$ of boundary arcs the
counts $N_{A_{i},B_{i}}$ are (approximately) independent.  The desired
result \eqref{eq:reformulation} will then follow from the Poisson
convergence criterion of section \ref{ssec:poissonApprox}.

The strategy just outlined is easily adapted to
Theorem~\ref{theorem:random-local-pairs}. Fix distinct points $x,x'\in
S$. For any pair $A,B$ of non-overlapping boundary arcs of $\partial B
(0,\alpha)$, denote by $N_{A,B} (\omega)$ and $N'_{A,B} (\omega)$ the
numbers of geodesic arcs in the collections $I_{n} (x)$ and $I_{n}
(x')$, respectively, that cross the target regions $D(x;\alpha T^{-1})$
and $D(x';\alpha T^{-1})$  from arc $A$ to arc $B$. To prove
Theorem~\ref{theorem:random-local-pairs} it suffices to prove the
following.

\begin{proposition}\label{proposition:reformulation-pairs}
For any finite collections $\{(A_{i},B_{i}) \}_{1\leq i\leq r}$ and
$\{(A'_{i},B'_{i}) \}_{1\leq i\leq r'}$ and any choice of nonnegative
integers $k_{i},k'_{i}$,
\begin{multline}\label{eq:reformulation-pairs}
	\lim_{n \rightarrow \infty} \lambda \left\{\omega \,:\,
		N_{A_{i}B_{i}} (\omega )=k_{i} \;\text{and}\;
		N'_{A'_{i}B'_{i}} (\omega )=k'_{i} \; \forall
		\,i\right\}\\
		=\left(\prod_{i=1}^{r} \frac{1}{k_{i}!} ( \kappa \beta_{A_{i},B_{i}}/2)^{k_{i}}
 e^{- \kappa \beta_{A_{i},B_{i}}/2}  \right) \left(\prod_{i=1}^{r'} \frac{1}{k'_{i}!}( \kappa  
	\beta_{A'_{i},B'_{i}}/2)^{k'_{i}}
	e^{- \kappa  \beta_{A'_{i},B'_{i}}/2}  \right). 
\end{multline}
\end{proposition}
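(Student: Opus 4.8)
The plan is to run the four-stage argument for Proposition~\ref{proposition:reformulation} in parallel for the two disks $D(x;\alpha T^{-1})$ and $D(x';\alpha T^{-1})$, and then to establish the asymptotic independence of the two families of crossing counts. First, for each pair $A,B$ we apply stage 1 (the asymptotics \eqref{eq:lambda-asymptotics} from section~\ref{sec:asymptotics}) to both $x$ and $x'$: the set $\Sigma(A,B;T)$ of $(\omega^-,\omega^+)$ whose geodesic crosses $D(x;\alpha T^{-1})$ through $A$ then $B$, and the analogous set $\Sigma'(A',B';T)$ for $x'$, each have $\lambda$-measure asymptotic to $\tfrac12\kappa_g\alpha\beta_{A,B}E_\lambda F\cdot T^{-1}$. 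Next, by stage 2 (section~\ref{sec:decomposition}) each of these sets is approximated within $o(T^{-1})$ by a finite union of cylinder sets $\Sigma^-_m(\omega^-)\times\Sigma^+_m(\omega^+)$ with $m=(\log n)^2$; concatenation produces, for $x$, a finite list of ``magic subwords'' of length $2m$, and for $x'$ a second finite list of magic subwords of length $2m$. A key preliminary observation is that, because $x\neq x'$, a single crossing of one disk and a single crossing of the other are produced by geodesic segments that (for $T$ large) are separated in time along the suspension flow by an amount bounded below by a positive constant times $T\cdot d_S(x,x')$; hence for $n$ large the two word-lists are disjoint, and moreover no length-$O((\log T)^3)$ block of $\omega_1\cdots\omega_n$ can contain a magic subword of both types.

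Stage 3 (section~\ref{sec:no-prefixes}) is then applied twice: with $\lambda$-probability tending to $1$, no block of $(\log T)^3$ consecutive letters of $\omega_1\cdots\omega_n$ contains two occurrences of $x$-type magic subwords, and likewise none contains two occurrences of $x'$-type magic subwords. Combined with the separation observation of the previous paragraph, this lets us conclude that, outside an event of vanishing probability, the total number of occurrences of all the relevant magic subwords (of both types) in $\omega_1\cdots\omega_n$ decomposes into well-separated single occurrences. In stage 4 (section~\ref{sec:final-step}) one chops $[n]$ into $\lfloor n/(\log T)^2\rfloor$ consecutive blocks of length $(\log T)^2$; each block independently (because the Gibbs process $\lambda$ has the regenerative/exponential-mixing structure of Proposition~\ref{proposition:regeneration}, and $(\log T)^2$ greatly exceeds the regeneration scale) contains at most one magic occurrence, of one specific type $(A_i,B_i)$ or $(A'_i,B'_i)$, with probability $\asymp T^{-1}(\log T)^2$, and contains none otherwise. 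We thus obtain a row of independent $\{0,1,\dots\}$-valued indicators whose success probabilities for the distinct word-types sum to $\tfrac12\kappa_g\beta_{A_i,B_i}$ (resp.\ $\tfrac12\kappa_g\beta_{A'_i,B'_i}$), so the multivariate law-of-small-numbers estimate of Proposition~\ref{proposition:mult-lsn} forces the joint law of the counts $(N_{A_iB_i},N'_{A'_iB'_i})$ to converge to the product of independent Poissons with those means, which is exactly \eqref{eq:reformulation-pairs}.

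The main obstacle is the joint version of stage 3 together with the disjointness of the two word-lists: one must show not merely that each disk is rarely revisited quickly, but that a quick transition \emph{between} the two disks is also negligible, and that the magic subwords attached to $x$ are genuinely distinct (as words, not just as cylinder sets) from those attached to $x'$. The first point follows from the shrinking-target estimate applied to the pair: the conditional Liouville measure of tangent vectors whose geodesic rays pass through $D(x;\alpha T^{-1})$ and then through $D(x';\alpha T^{-1})$ within time $(\log T)^3$ is of order $(\log T)^3 \cdot T^{-2}\cdot\text{(const)}$ against a conditioning event of probability $\asymp T^{-1}$, hence vanishes; here one uses that $x\neq x'$ so the two targets are at fixed positive distance and the relevant return-estimate of section~\ref{sec:no-prefixes} applies verbatim with the self-return replaced by a cross-return. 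The second point is where the hypothesis $x\neq x'$ is essential: the word-lists are determined by which arcs $J^\pm_m(\cdot)$ the ideal endpoints of the crossing geodesics lie in, and since the geodesics through $D(x;\alpha T^{-1})$ and through $D(x';\alpha T^{-1})$ form, in the limit, two disjoint families of directions in $T^1S$ (they project to disjoint neighborhoods on the surface), for $m$ large the associated cylinder sets — and therefore the concatenated length-$2m$ words — are forced to differ. Once these two facts are in hand, the remaining bookkeeping is a routine extension of the single-point argument.
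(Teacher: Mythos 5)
There is a genuine gap, and it sits exactly where the paper's own proof says the ``new wrinkle'' is. You assert, as a deterministic geometric fact, that the two lists of magic subwords (for $x$ and for $x'$) are disjoint, justifying this first by claiming that a crossing of one disk and a crossing of the other are separated along the flow by a time of order $T\cdot d_S(x,x')$, and later by claiming that the geodesics through $D(x;\alpha T^{-1})$ and through $D(x';\alpha T^{-1})$ form disjoint families. Both claims are false. The disks are at a \emph{fixed} distance $d_S(x,x')$ on the surface, not a distance scaling with $T$, and a single geodesic crossing of the fundamental polygon $\mathcal{P}$ (a single base-crossing of the suspension flow, hence a single position in the symbolic sequence) can perfectly well pass through both $D(x;\alpha T^{-1})$ and $D(x';\alpha T^{-1})$. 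When that happens, the very same length-$2m$ word is a magic subword for some pair $(A_i,B_i)$ at $x$ and simultaneously for some pair $(A'_{i'},B'_{i'})$ at $x'$, so the word-lists can overlap; your ``disjoint families of directions'' argument conflates the (disjoint) sets of tangent vectors \emph{based} in the two disks with the (non-disjoint) sets of geodesic segments \emph{passing through} them. Your cross-return estimate handles quick transitions between the two disks in distinct polygon crossings, but not this single-crossing double hit, which is precisely the case your argument declares impossible.

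The correct treatment is probabilistic rather than deterministic, and it is what the paper does: Proposition~\ref{proposition:double-hits} shows that the set of $\omega$ whose fiber segment hits both disks has $\lambda$-measure $O(T^{-2})$, so the expected number of double hits among the first $n\asymp T$ crossings tends to $0$; one may therefore delete from one list any magic subword common to both lists without changing the counts except on an event of vanishing probability. After that deletion the adjusted word sets are genuinely disjoint, the block counts form a multinomial array, and Proposition~\ref{proposition:mult-lsn} gives \eqref{eq:reformulation-pairs}. The rest of your outline (running the four stages in parallel, the cross-return estimate in the spirit of section~\ref{sec:no-prefixes}, and the multivariate law of small numbers) matches the paper's argument, so the proposal is repaired by replacing the false disjointness claim with the double-hit estimate and the deletion step.
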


\noindent
\textbf{Note:} To avoid notational clutter, here and throughout
sections \ref{sec:no-prefixes}, \ref{sec:asymptotics}, and \ref{sec:decomposition}
 we will use the abbreviation $\kappa$ for
$\kappa_{S}=1/\textrm {area}(S)$.

\section{No Quick Entries or Re-entries of Small
  Disks}\label{sec:no-prefixes}

\begin{proposition}\label{proposition:no-quick-entries}
  Let $\gamma$ be a geodesic ray whose initial tangent vector is
  chosen at random according to normalized Liouville measure. For any
  positive function $f(T)$ satisfying
  $\lim_{T\rightarrow \infty}f(T)/T=0$, the probability that the
  $\gamma$ enters the region  $D(x,\alpha T^{-1})$ before time
  $f(T)$  is of order $O(f(T)/T)$.
\end{proposition}

\begin{proof}
  For any unit vector $v \in T^{1}S$, denote
  by $\tau (v)$ the smallest nonnegative time $t$ (possibly $+\infty$)
  at which the geodesic ray $\gamma_{t}(v)$ with initial tangent
  vector $v$ enters $D(x,\alpha T^{-1})$. Since any geodesic that
  enters $D(x, \alpha T^{-1})$ must spend at least $2\alpha T^{-1}$
  units of time in the surrounding ball $D(x, 2 \alpha T^{-1})$, we
  have, by the invariance of the Liouville measure,
  \begin{align*}
    \mu_{L} \xset {v\,:\, \tau (v)\leq f(T)}&\leq (2\alpha)^{-1}T
    \int_{T^{1}S} \int_{0}^{f(T)}\mathbf{1} \xset { \gamma_{t}(v)\in D(x;
      2\alpha T^{-1}) }\, dt \, d\mu_{L}(v) \\
    &= (2\alpha)^{-1}T \int_{0}^{f(T)}  \int_{T^{1}S}  \mathbf{1} \xset { \gamma_{t}(v)\in D(x;
      2\alpha T^{-1}) }\, d \mu_{L}(v) \,dt \\
    &=  (2\alpha)^{-1}T \int_{0}^{f(T)}  \textrm {area} (D(x;
      2\alpha T^{-1})) \, dt/ \textrm {area}(S)\\
    &\sim  (2\alpha)^{-1}T \times f(T)\times (4\pi
      \alpha^{2}T^{-2}) / \textrm {area}(S)=O(f(T)/T)\longrightarrow 0.
  \end{align*}
  \end{proof}

\begin{proposition}
  \label{proposition:no-quick-returns}
  Let $\gamma$ be a geodesic ray whose initial tangent vector is
  chosen at random according to normalized Liouville measure. If
  $f(T)=o(T)$ as $T \rightarrow \infty$ then the probability that
  $\gamma$ enters (or begins in) $D(x,\alpha T^{-1})$ before
  time $T$ and then re-enters within time $f(T)$ converges to $0$ as
  $T \rightarrow \infty$.
\end{proposition}

As in the proof of Proposition~\ref{proposition:no-quick-entries}, the
region $D(x, \alpha T^{-1})$ can be replaced by the ball
$D(x, \alpha T^{-1})$. The proof of
Proposition~\ref{proposition:no-quick-returns} will be based on the
following estimate.

\begin{lemma}
  \label{lemma:no-quick-returns}
  Fix $y \in D(x, 2\alpha T^{-1})$ and $0< \beta <1$, and define
  $R_{T}=R_{T}(y)$ to be the set of unit tangent vectors $v$ based at
  $y$ such that the geodesic ray with initial tangent vector $v$
  enters the ball $D(x, \alpha T^{-1})$ before time $f(T)$ after having
  first exited $D(x, 2 \alpha T^{-1})$. There is a
  constant $K=K(\alpha)< \infty$ not depending on $y$ such that for all
  $T \geq 1$ the Lebesgue measure of $R_{T}$ satisfies
  \begin{equation}
    \label{eq:no-quick-returns}
    \textrm {Lebesgue} (R_{T})\leq K  f(T)/T .
  \end{equation}
\end{lemma}

The proof is deferred until after the proof of
Proposition~\ref{proposition:no-quick-returns}.  Given the lemma,
Proposition~\ref{proposition:no-quick-returns} follows by an
argument similar to that used in the proof of
Proposition~\ref{proposition:no-quick-entries}.

\begin{proof}[Proof of Proposition \ref{proposition:no-quick-returns}]
  Denote by $B_{T}$ the set of all unit tangent vectors $v$ with base
  point in $D(x, \alpha T^{-1})$ such that the geodesic ray
  $\gamma_{t}(v)$ re-enters $D(x, \alpha T^{-1})$  before time
  $f(T)$ (after having first
  exited), and by $B^{*}_{T}$ the set of all
  unit tangent vectors $v$ with base point in the enlarged disk
  $D(x, 2\alpha T^{-1})$ such that the geodesic ray $\gamma_{t}(v)$
  enters $B_{T}$ before leaving $D(x, 2\alpha T^{-1})$.  Clearly,
  $B_{T}\subset B^{*}_{T}$, and if $v \in B^{*}_{T}$, then the
  geodesic ray must spend at least $\alpha T^{-1}$ units of time in
  the disk $D(x, 2 \alpha T^{-1})$ before exiting. Moreover,
  Lemma~\ref{lemma:no-quick-returns} implies that
  \begin{displaymath}
    \mu_{L}(B^{*}_{T})\leq K  T^{-1} f(T)\times \frac{\textrm
    {area}(D(x, 2 \alpha T^{-1}))}{\textrm {area}(S)} \leq K'
  \alpha^{2} T^{-3} f(T).
  \end{displaymath}

  For any $v\in T^{1}(S)$ let $\tau^{*}(v)$ be the first time $t$ that the
  geodesic ray $\gamma_{t}(v)$ enters the set $B_{T}$. Then by the
  invariance of Liouville measure,
  \begin{align*}
    \mu_{L}\xset {v \,:\, \tau^{*}(v)\leq T}  & \leq \alpha^{-1}T
                                                \int_{T^{1}S}
                                                \int_{0}^{T} \mathbf{1}
                                                \xset {\gamma_{t}\in
                                                B^{*}_{T}} \, dt \,
                                                d\mu_{L}(v)\\
    & =   \alpha^{-1}T\int_{0}^{T}  \int_{T^{1}S} \mathbf{1}
                                                \xset {\gamma_{t}\in
                                                B^{*}_{T}} \, d
      \mu_{L}(v) \,dt\\
                                              &=  \alpha^{-1}T \int_{0}^{T}\mu_{L}(B^{*}_{T}) \, dt \\
    &\leq K' \alpha^{2} T^{-1} f(T)\longrightarrow 0 \quad \textrm
      {as} \; T \rightarrow \infty.
  \end{align*}
\end{proof}

\begin{proof}
  [Proof of Lemma~\ref{lemma:no-quick-returns}] Let $\tilde{S}$ be the
  universal cover of $S$, viewed as the (open) unit disk $\mathbb{D}$
  endowed with Riemannian metric $\tilde{\varrho}$, the natural lift
  of the Riemannian metric $\varrho$ on $S$. The metric
  $\tilde{\varrho}$ is invariant by the fundamental group
  $\pi_{1}(S)$. Furthermore, the action of $\pi_{1}(S)$ on $\tilde{S}$
  is discrete, and  there is a \emph{fundamental polygon}
  $\mathcal{P}$ for this action, bounded by geodesic segments, such
  that $\tilde{S}$ is tiled by the isometric  images $g\mathcal{P}$,
  where $g$ ranges over $\pi_{1}(S)$. Since the surface $S$ is compact, the
  fundamental polygon $\mathcal{P}$ can be chosen so that it has finite
  diameter $\delta$. Fix pre-images $\tilde{x}, \tilde{y}\in \tilde{S}$ of
  the points $x,y\in S$ in such a way that $\tilde{x}\in \mathcal{P}$ and
  $\tilde{y}\in D(\tilde{x}, 2 \alpha T^{-1})$; then for all
  sufficiently large $T$ the pre-image of the disk
  $D(x, \alpha T^{-1})$ is the disjoint union of isometric disks
  $D(g \tilde{x}, \alpha T^{-1})$ where $g$ ranges over
  $\pi_{1}(S)$. Clearly, since these disks are non-overlapping, only
  finitely many can intersect the fundamental polygon.

  The set $R_{T}(y)$ lifts to a set
  $\tilde{R}_{T}(\tilde{y})$ of the same Lebesgue measure in the unit
  tangent space $T^{1}_{\tilde{y}}(\tilde{S})$; this lift
  $\tilde{R}_{T}(\tilde{y})$ contains all direction vectors
  $v\in T^{1}_{\tilde{y}}(\tilde{S})$ such that the geodesic ray
  $\tilde{\gamma}_{t}(v)$ in $\tilde{S}$ with initial tangent vector
  $v$ intersects one of the balls $D(g \tilde{x}, \alpha T^{-1})$ with
  $g \not = 1$ at distance $\leq f(T)$ from $\tilde{y}$. To
  estimate the size of $\tilde{R}_{T}(\tilde{y})$, we decompose it by
  grouping the target disks $D(g\tilde{x}, \alpha T^{-1})$ in
  concentric shells by distance
  from the point $\tilde{y}$: thus, in particular,
  \begin{displaymath}
    \tilde{R}_{T}(\tilde{y})\subset
    \bigcup_{m=1}^{[f(T)+1]/(3\delta)}\bigcup_{g \in \mathcal{A}_{m}}
    \Theta_{g,T} (\tilde{y}) 
  \end{displaymath}
  where $\Theta_{g,T}(\tilde{y})$ is the set of all unit tangent
  vectors $v \in T^{1}_{\tilde{y}}(\tilde{S})$ such that the geodesic
  ray $\tilde{\gamma}_{t}(v)$ intersects  the disk $D(g\tilde{x},
  \alpha T^{-1})$, and
  \begin{displaymath}
    \mathcal{A}_{m}:=\xset {g\in \pi_{1}(S) \,:\, 3(m-1)\delta \leq\textrm
      {distance}(g \tilde{x}, \tilde{y}) < 3m\delta}. 
  \end{displaymath}
  (Recall that $\delta$ is the diameter of the fundamental
  polygon. Consequently, every point on the circle $\Gamma_{3m
    \delta}$ of radius $3 m \delta$ centered at $\tilde{y}$ is within
  distance $5\delta/2$ of  $g\tilde{x}$ for some $g \in \mathcal{A}_{m}$.)
  To prove inequality \eqref{eq:no-quick-returns} it will suffice to
  prove that for some constant $K$ independent of $m,T$ and the choice
  of $\tilde{y}\in D(\tilde{x}, \alpha T^{-1})$,
  \begin{equation}\label{eq:shadow}
    \sum_{g \in \mathcal{A}_{m}}\textrm {Lebesgue} (\Theta_{g,T})\leq K  T^{-1}.
  \end{equation}

  For each unit tangent vector $v \in T^{1}_{\tilde{y}}\tilde{S}$ and
  each real $t>0$, define the \emph{expansion factor} $\eta_{t}(v)$
  for the geodesic flow at time $t$ in direction $v$ to be the amount
  by which the exponential map $\exp _{\tilde{y}}$ expands distances
  at the tangent vector $tv$ in the direction $w=w(v)\in T^{1}S$
  orthogonal to $v$, that is,  
  \begin{displaymath}
    \eta_{t}(v)= \xnorm{(d\exp_{\tilde{y}}(t v))w}
  \end{displaymath}
  where $w=w(y)\in  T^{1}_{\tilde{y}}\tilde{S}$  is  the unit vector orthogonal
  to $v$ (the choice of sign is irrelevant). Note that since our surface is compact, this quantity is bounded away from both $1$ and $\infty$ (since the curvature is bounded away from zero). Thus, if
  $\Gamma_{t}=\Gamma_{t}(\tilde{y})$ is the circle of radius $t$ centered
  at  $\tilde{y}$ in $\tilde{S}$, then
  \begin{equation}\label{eq:expansionFormulaC}
    \textrm {circumference}(\Gamma_{t})= \int_{T^{1}_{\tilde{y}}\tilde{S}}
    \eta_{t}(v) \, d \textrm {Lebesgue}(v),
  \end{equation}
  and more generally, for any interval
  $\Theta \subset T^{1}_{\tilde{y}}\tilde{S}$,
  \begin{equation}\label{eq:expansionFormulaG}
    \textrm {arc-length} (\exp_{\tilde{y}}(t \Theta))=\int_{\Theta}
    \eta_{t}(v) \, d \textrm {Lebesgue}(v). 
  \end{equation}

  \begin{claim}\label{claim:expansionFactor}
    There exists a constant $C<\infty$ not depending on 
    the choice of      $\tilde{y}\in \tilde{S}$ such that for any two
    unit vectors $v_{1},v_{2}\in T^{1}_{\tilde{y}}\tilde{S}$
    satisfying the condition $\textrm
    {distance}(\tilde{\gamma}_{t}(v_{1}),
    \tilde{\gamma}_{t}(v_{2}))\leq 3\delta$, any $t
    > 3 \delta$, and any $0\leq s\leq 3\delta$,
    \begin{equation}
      \label{eq:expansionBounds}
      C^{-1}\leq \frac{\eta_{t}(v_{1})}{\eta_{t-s}(v_{2})} \leq     C .
    \end{equation}
  \end{claim}

  Before proving the claim, we show how it implies inequality
  \eqref{eq:shadow}. For each deck transformation $g\in \mathcal{A}_{m}$, let
  $\Theta^{*}_{g,m}$ be the set of all direction vectors $v$ based at
  $\tilde{y}$ for which the geodesic ray $\tilde{\gamma}_{t}(v)$
  approaches the point $g \tilde{x}$ at least as closely as it
  approaches any other $g' \tilde{x}$, where $g' \in \mathcal{A}_{m}$. These
  sets overlap either in isolated points or not at all, and their
  union is the entire set $T^{1}_{\tilde{y}}\tilde{S}$. Thus, the sets
  $\Theta^{*}_{m}$, where $g \in \mathcal{A}_{m}$, form (up to a set of measure
  $0$) a partition of $T^{1}_{\tilde{y}}\tilde{S}$. Furthermore, since
  the action of $\pi_{1}(S)$ on the universal cover $\tilde{S}$ is
  discrete, there exists an integer $k \geq1$ such that none of the
  sets $\Theta^{*}_{m}$ intersects more than $k$ of the arcs
  $\Theta_{g,T}$.

  Claim~\ref{claim:expansionFactor}, together with the identity
  \eqref{eq:expansionFormulaG}, implies that for a
  suitable constant $1<C_{1}<\infty$ not depending on $T,\tilde{y},m$, or
  $g\in \mathcal{A}_{m}$,
  \begin{align*}
    \textrm {Lebesgue} (\Theta_{g,T})&\leq C_{1}  T^{-1}
                                       \eta_{3m \delta}(v_{g})^{-1} \quad
                                       \textrm {and}\\
    \textrm {Lebesgue} (\Theta^{*}_{g,m})&\geq C_{1}^{-1}\eta_{3m
                                           \delta}(v_{g})^{-1}. 
  \end{align*}
  where $v_{g}\in T^{1}_{\tilde{y}}\tilde{S}$ is the unique direction
  such that the geodesic ray $\tilde{\gamma}_{t}(v_{g})$ goes through
  the point $g\tilde{x}$. Since each arc $\Theta_{g,T}$ is contained
  in the union (over $g' \in \mathcal{A}_{m}$) of the sets $\Theta^{*}_{g',m}$,
  and since no $\Theta^{*}_{g',m}$ intersects more than $k$ of the
  arcs $\Theta_{g,T}$, it follows that
  \begin{displaymath}
    \sum_{g \in \mathcal{A}_{m}}\textrm {Lebesgue} (\Theta_{g,T})\leq kC_{1}^{2} T^{-1}
     \sum_{g \in \mathcal{A}_{m}}\textrm {Lebesgue} (\Theta^{*}_{g,m}).
   \end{displaymath}
   The desired result \eqref{eq:shadow} now follows, because the sets
   $\Theta^{*}_{g,m}$ partition (up to a set of measure $0$) the unit
   tangent space $T^{1}_{\tilde{y}}\tilde{S}$.

\begin{proof}[Proof of Claim~\ref{claim:expansionFactor}]
  The expansion factor
  $\eta_{t}(v)$ can be calculated by integrating the infinitesimal
  expansion rates along the geodesic:
  \begin{gather*}
    \eta_{t}(v)= \exp \xset {\int_{0}^{t} \zeta _{s}(v) \, ds }\quad
      \textrm {where} \\
     \zeta_{t}(v)= \log \frac{d}{ds} 
     \left(
       \xnorm{d\exp_{\tilde{\gamma}_{t}(v)} (s \tilde{\gamma}_{t}'(v))w_{t}}
     \right)_{s=0}
   \end{gather*}
   and $w_{t}\in T^{1}_{\tilde{\gamma}_{t}(v)}\tilde{S}$ is the unit
   vector tangent to the circle $C_{t}$ (or equivalently, orthogonal
   to the direction $\tilde{\gamma}_{t}'(v)$ of the geodesic) at the
   point $\tilde{\gamma}_{t}(v)$.  Because the curvature of the
   Riemannian metric ${\varrho}$ is everywhere negative, the
   infinitesimal expansion rate $\zeta_{t}(v)$ is strictly
   positive. Moreover, because the Riemannian structure is
   $C^{\infty}$, so is the dependence of $\zeta_{t}(v)$ on both $t$
   and $v \in T^{1}\tilde{S}$. Consequently, there exist constants
   $C_{2}<\infty$ and $a>0$ such that for any two unit vectors
   $v_{1}, v_{2}\in T^{1}_{\tilde{y}}\tilde{S}$ and any $t>0$,
   \begin{align*}
     \textrm {distance}
     (\tilde{\gamma}_{t}(v_{1}),\tilde{\gamma}_{t}(v_{2})) &\leq 3\delta
     \quad \Longrightarrow \\
     \textrm {distance}
     (\tilde{\gamma}_{t-s}(v_{1}),\tilde{\gamma}_{t-s}(v_{2})) &\leq C_{2}
     e^{-as} \quad \textrm {for all} \; 0 \leq s \leq t.
   \end{align*}
   Therefore, the integral formula for the expansion rate
   $\eta_{t}(v)$ implies that for an appropriate constant $C_{3}<\infty$,
   \begin{gather*}
     \textrm {distance}
     (\tilde{\gamma}_{t}(v_{1}),\tilde{\gamma}_{t}(v_{2})) \leq 1
     \quad \Longrightarrow \\
     C_{3}^{-1}\leq \frac{\eta_{t}(v_{1})}{\eta_{t-s}(v_{2})} \leq
     C_{3} \qquad \textrm {for all} \; t>3\delta \; \textrm {and}\; 0\leq
     s \leq 3\delta.
   \end{gather*}
  
\end{proof}

\end{proof}

\section{Measure of the Crossing Sets}\label{sec:asymptotics}

Fix $x\in S$ and $\alpha >0$.  Let $A,B$ be any two disjoint closed
arcs, each with nonempty interior, on the boundary of the ball
$B (0,\alpha)$ in the unit tangent space $T^1_{x}S$. Recall that we
have agreed to identify the arcs $A,B$ with their images $A^{T},B^{T}$
on the closed curve $\partial D(x, \alpha T^{-1})$ under the
scaled exponential mapping $v\mapsto \exp _{x}(v/T)$.  Recall also
that $\Sigma(A,B;T)$ is the set of all sequences $\omega \in \Sigma$
such that the intersection of the geodesic segment
$p\circ\pi (\mathcal{F}_{\omega})$ with the neighborhood
$D(x, \alpha T^{-1})$ is a geodesic segment that either
coincides with or extends to a geodesic crossing of
$D(x, \alpha T^{-1})$ from boundary arc $A^{T}$ to boundary
arc $B^{T}$.

\begin{proposition}\label{proposition:measure-asymptotics}
For all $A,B,$ and $\alpha >0$,
\begin{equation}\label{eq:measure-asymptotics-again}
	\lim_{T \rightarrow \infty} T\lambda (\Sigma (A,B;T))=% \frac{1}{2\pi \,\text{area} (S)}
	\frac{1}{2} \kappa \beta_{A,B}E_{\lambda}F
\end{equation}
where $\beta_{A,B}$ is as defined by equation~\eqref{eq:beta} (with
$D=B(0, \alpha)$) and $\kappa=1/\textrm {area}(S)$.
\end{proposition}

\begin{proof}
  % [Proof 1]
  Be definition, if $\omega \in \Sigma(A,B;T)$ then there exist unique
  times $s_{A}(\omega)<s_{B}(\omega)<F(\omega)$ such that the segment
  $\phi ([s_{A}(\omega),s_{B}(\omega)])$ projects via $p\circ\pi$ to a
  geodesic crossing of $D(x,\alpha T^{-1})$ from boundary arc $A^{T}$ to
  boundary arc $B^{T}$. It is possible that $s_{A}(\omega)<0$; this will
  occur if and only if  $p\circ\pi (\omega,0)$ is an interior  point
  of  $D(x, \alpha T^{-1})$. Now the surface area of $D(x,
  \alpha 
  T^{-1})$ is of order $T^{-2}$; hence, since $\lambda^{*}$ is the
  pullback of  the normalized Liouville measure $\mu_{L}$, the
  $\lambda-$measure of the set of $\omega \in  \Sigma(A,B;T)$  such
  that $s_{A}(\omega)<0$ is also of order $T^{-2}$. Consequently, in
  proving \eqref{eq:measure-asymptotics-again} we may ignore the
  contribution of the set
  \begin{displaymath}
    \Sigma(A,B;T)_{-}=\xset {\omega \in \Sigma(A,B;T)\,:\,
    s_{A}(\omega)<0}.
\end{displaymath}
Set $\Sigma(A,B;T)_{+}=\Sigma(A,B;T)\setminus \Sigma(A,B;T)_{-}$.

Denote by $\Upsilon (A,B;T)$ the set of all $u\in T^1S$ that are
tangents to geodesic segments from arc $A^{T}$ to arc $B^{T}$.  This
set nearly coincides with the projections of those points $(\omega ,s)\in
\Sigma_{F}$ such that $\omega \in \Sigma (A,B;T)$ and $0\leq s_{A} (\omega
)<s<s_{B} (\omega)$, the difference being accounted for by the set $
\Sigma(A,B;T)_{-}$. Consequently, by 
equation~\eqref{eq:measure-transfer},
\begin{align*}
  \lambda (\Sigma (A,B;T))&= \int_{\Sigma (A,B;T)}
                            \frac{s_{B}-s_{A}}{s_{B}-s_{A}} \,d\lambda \\
                          &=\int_{\pi^{-1}\Upsilon (A,B;T)} \frac{1}{s_{B} (\omega)-s_{A}
                            (\omega)} \,d\lambda^{*} (\omega ,s) \times \int_{\Sigma}F\,d\lambda +O(T^{-2})\\
                          &=\int_{\Upsilon (A,B;T)} \frac{1}{\tau
                            (u)} \,d\mu_{L} (u) \times
                            \int_{\Sigma}F\,d\lambda +O(T^{-2}), 
\end{align*}
where $\tau(u)$ is the length of the geodesic segment from $A$ to $B$
on which $u$ lies, for any $u\in \Upsilon (A,B;T)$.

Now we exploit the defining property of the Liouville measure $\mu_{L}$,
specifically, that locally $\mu_{L}$ is the product of normalized
surface area with the Haar measure on the circle.  For large $T$,
the exponential mapping $v\mapsto \exp \{v/T \}$ maps the ball $B
(0,\alpha)$ in the tangent space $T_x S$ onto $D (x;\alpha T^{-1})$
nearly isometrically (after scaling by the factor $T^{-1}$), so
rescaled surface area on $D (x;\alpha T^{-1})$ is nearly identical with the
pushforward of Lebesgue measure on $B (0,\alpha)$, scaled by $T^{-2}$.
Furthermore, the inverse images of geodesic segments across $D
(x;\alpha T^{-1})$ are nearly straight line segments crossing $B
(0;\alpha)$; those that cross from arc $A^{T}$ to arc $B^{T}$ in $\partial \mathcal{D}
(x;\alpha T^{-1})$ will pull back to straight line segments from arc
$A$ to arc $B$ in $\partial B (0;\alpha )$. These can be parametrized
by the angle at which they meet the $x-$axis, as in
Figure~\ref{fig:Strip at angle theta}; for each angle $\theta$, the
integral of $1/\text{length}$ over the region in $B (0;\alpha)$ swept
out by line segments crossing from arc $A$ to arc $B$ at angle
$\theta$ is $\psi (\theta)$, as in Figure~\ref{fig:Strip at angle
theta} (where the convex region is now $B (0;\alpha)$).  Therefore, as
$T \rightarrow \infty$,
\[
	\int_{\Upsilon (A,B;T)} \frac{1}{\tau  (u)} \,d\mu_{L} (u)\sim T^{-1}
	\frac{1}{2\pi\, \text{area} (S)} \int_{-\pi  /2}^{\pi /2}
	 \psi (\theta ) \,d\theta =T^{-1} \kappa \beta_{A,B}/2.
\]
\end{proof}

Similar calculations can be used to show  there is vanishingly
small  probability that one
of the first $n$ geodesic segments $p\circ\pi (\mathcal{F}_{\sigma^{i}\omega})$
will hit both $D(x;\alpha T^{-1})$ and $D(x';\alpha T^{-1})$, where
$x\not =x'$ are distinct point of $S$. Define 
$H (x,x';\alpha T^{-1})$ to be the set of all $\omega \in \Sigma$ such
that the vertical fiber $\mathcal{F}_{\omega}$ over $(\omega ,0)$ in
$\Sigma_{F}$ projects to a geodesic segment that intersects both 
 $D(x;\alpha T^{-1})$ and $D(x';\alpha T^{-1})$.

\begin{proposition}\label{proposition:double-hits}
For any two distinct points $x,x'\in S$ and each $\alpha >0$,
\[
	\lim_{T \rightarrow \infty} T\lambda (H (x,x';\alpha T^{-1}))=0.
\]
\end{proposition}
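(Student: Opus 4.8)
\textbf{Proof proposal for Proposition~\ref{proposition:double-hits}.}

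The plan is to mimic the computation in the proof of Proposition~\ref{proposition:measure-asymptotics}, but now estimating the $\lambda$-measure of the set of sequences $\omega$ whose associated fiber projects to a geodesic segment across $\mathcal{P}$ hitting \emph{both} small disks. First I would use the measure-transfer formula \eqref{eq:measure-transfer} to rewrite $\lambda(H(x,x';\alpha T^{-1}))$ in terms of the Liouville measure $L$ on $T^{1}S$: up to the bounded factor $\int_{\Sigma}F\,d\lambda$, it is comparable to the $L$-measure of the set $\Upsilon$ of unit tangent vectors $u$ lying on some geodesic segment that crosses $\mathcal{P}$ and meets both $D(x;\alpha T^{-1})$ and $D(x';\alpha T^{-1})$, each such vector weighted by $1/\tau(u)$ where $\tau(u) = O(1)$ is the length of the ambient crossing segment of $\mathcal{P}$. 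Since $\tau$ is bounded below by a positive constant (the injectivity radius of $S$ bounds crossing times of $\mathcal{P}$ from below), it suffices to show $L(\Upsilon) = o(T^{-1})$.

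Next I would estimate $L(\Upsilon)$ directly from the product structure of Liouville measure. A geodesic segment that meets both $D(x;\alpha T^{-1})$ and $D(x';\alpha T^{-1})$ is, within the bounded-length window of a single crossing of $\mathcal{P}$, nearly a geodesic of the Poincaré disk; its direction is constrained (up to an angular window of size $O(T^{-1})$) to point from one disk toward the other, because $x \neq x'$ are at fixed positive distance $d = d_S(x,x') > 0$ and a geodesic through a disk of radius $\alpha T^{-1}$ that also passes within $\alpha T^{-1}$ of a point at distance $d$ must have its direction within $O(\alpha T^{-1}/d)$ of the geodesic direction from $x$ to $x'$. Thus, locally near $x$, the set of tangent vectors $u \in \Upsilon$ fibers over: (i) a base point in $D(x;\alpha T^{-1})$, contributing hyperbolic area $O(T^{-2})$; and (ii) a set of directions of angular measure $O(T^{-1})$. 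Multiplying, and using that $L$ is locally a product of normalized area and Haar measure on the circle, gives $L(\Upsilon) = O(T^{-3}) = o(T^{-1})$, which is more than enough. (One must sum over the finitely many sheets of the covering $\mathcal{P} \to S$ near $x$, or equivalently over the finitely many lifts of $x'$ within a bounded neighborhood of the lift of $x$; each contributes the same order, and there are $O(1)$ of them since $d$ is fixed, so the bound is unaffected.)

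Finally I would assemble these estimates: $T\lambda(H(x,x';\alpha T^{-1})) \le C \cdot T \cdot L(\Upsilon) = C \cdot T \cdot O(T^{-3}) \to 0$ as $T \to \infty$. The main obstacle is not the order-of-magnitude count, which is robust, but rather the geometric bookkeeping needed to justify the ``nearly straight'' approximation uniformly: one must confirm that over a single crossing of $\mathcal{P}$ (hence over bounded hyperbolic time) the exponential-map distortion and the curvature bending of geodesics are negligible at scale $T^{-1}$, and that if the two disks are hit on two \emph{different} crossings of $\mathcal{P}$ the argument still applies---this is handled by noting there are at most $O(1)$ crossings within the relevant bounded return window (indeed the point is that hitting $x$ then $x'$ forces a short return, controlled exactly as in the Poisson-return estimates of section~\ref{sec:no-prefixes}), and for each pair of crossings the same direction-constraint argument bounds the relevant Liouville measure by $O(T^{-3})$. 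Summing over the bounded number of crossing-pairs preserves the $o(T^{-1})$ bound.
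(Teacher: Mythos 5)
Your overall strategy is the same as the paper's: rewrite $\lambda(H(x,x';\alpha T^{-1}))$ via the measure-transfer formula \eqref{eq:measure-transfer}, then exploit the local product structure of Liouville measure together with the fact that, because $d_S(x,x')>0$ is fixed, the directions of geodesics hitting both disks are confined to an angular window of size $O(T^{-1})$. However, there is an inconsistency in your fiber/weight bookkeeping that makes the key intermediate estimate wrong as stated. You define $\Upsilon$ to be the set of \emph{all} unit tangent vectors lying anywhere on a crossing segment of $\mathcal{P}$ that meets both disks, and you pair this with the weight $1/\tau(u)$ where $\tau(u)\asymp 1$ is the full crossing time; that pairing is the correct one coming from \eqref{eq:measure-transfer}. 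But when you then estimate $L(\Upsilon)$ you restrict the basepoint to $D(x;\alpha T^{-1})$ (area $O(T^{-2})$) and multiply by the angular window $O(T^{-1})$ to get $O(T^{-3})$. For the set $\Upsilon$ you actually defined, the basepoints are \emph{not} confined to the small disk: they sweep out a tube of length $O(1)$ and width $O(T^{-1})$ along the geodesic through $x$ and $x'$, so the area factor is $O(T^{-1})$, not $O(T^{-2})$, and the correct order is $L(\Upsilon)=O(T^{-2})$. You may restrict the basepoints to the small disk only if you simultaneously replace the weight by the reciprocal of the sojourn time in that disk, which is of order $T$, not $O(1)$ -- this is exactly what the paper does (it uses entry/exit times $s_0<s_1$ of $D(x;2\alpha T^{-1})$, with $s_1-s_0\geq \alpha T^{-1}$, so the weight contributes a factor $\leq CT$, and $L$ of the restricted vector set is $O(T^{-3})$). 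Either consistent accounting yields $\lambda(H)=O(T^{-2})$, hence $T\lambda(H)\to 0$, so your conclusion survives, but the claimed $O(T^{-3})$ bound for $\lambda(H)$ is too strong by a factor of $T$ and the step producing it would not stand as written.

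Two smaller points. First, your justification that $\tau(u)$ is bounded below by the injectivity radius is not the right reason: crossing times of the fundamental polygon are not controlled by the injectivity radius (think of a geodesic clipping a corner of $\mathcal{P}$); what one actually uses is that the height function $F$ is continuous and positive on the compact space $\Sigma$, hence bounded below. Second, the case in which the two disks are hit on two \emph{different} crossings of $\mathcal{P}$ is not part of the statement: $H(x,x';\alpha T^{-1})$ is by definition the set of $\omega$ whose single fiber $\mathcal{F}_{\omega}$ projects to one crossing segment meeting both disks, so the appeal to the return-time estimates of section~\ref{sec:no-prefixes} is unnecessary here (those multi-crossing correlations are handled separately, in the proof of Proposition~\ref{proposition:reformulation-pairs}, where Proposition~\ref{proposition:double-hits} is invoked only for single crossings).
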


\begin{proof}
  Assume that $T$ is sufficiently large that the closed disks
  $\bar{\mathcal{D}} (x;2\alpha T^{-1})$ and $\bar{\mathcal{D}} (x';2\alpha T^{-1})$ do
  not intersect.  For any $\omega$ such that the fiber
  $\mathcal{F}_{\omega}$ projects to a geodesic segment that enters
  $D(x;2\alpha T^{-1})$ there will be unique times
  $0<s_{0} (\omega)<s_{1} (\omega)<F (\omega)$ of entry and exit
  (except, as in the proof of
  Proposition~\ref{proposition:measure-asymptotics}, for a set of size
  $O(T^{-2})$); for those $\omega$ such that the projection of
  $\mathcal{F}_{\omega }$ enters the smaller disk
  $D(x;\alpha T^{-1})$, the sojourn time
  $s_{1} (\omega)-s_{0} (\omega)$ will be at least $\alpha T^{-1}$.

  Denote by $\Upsilon (x,x';\alpha T^{-1})$ the set of all tangent
  vectors $u\in T^1S$ based at points in
  $D(x;\alpha T^{-1})$ such that $u$ lies on the directed
  geodesic segment $\pi (\mathcal{F}_{\sigma^{i}\omega})$ for some
  sequence $\omega \in H (x,x';\alpha T^{-1})$. Since $x$ and $x'$ are
  distinct points of $S$, the neighborhoods
  $D(x;\alpha T^{-1})$ and $D(x';\alpha T^{-1})$
  are separated by at least $\text{dist} (x,x')/2$ (for large $T$), so
  there is a constant $C=C (x,x',\alpha) <\infty$ such that for every
  point $y\in D(x,\alpha T^{-1})$ the set of angles
  $\theta$ such that $(y,\theta)\in \Upsilon (x,x';\alpha T^{-1})$ has
  Lebesgue measure less than $CT^{-1}$. Now
\begin{align*}
	\lambda (H (x,x';\alpha T^{-1}))&=\int_{H (x,x';\alpha T^{-1})}
	\frac{s_{1}-s_{0}}{s_{1}-s_{0}} \,d\lambda \\
	&= \int_{\pi^{-1}\Upsilon (x,x';\alpha T^{-1})} \frac{1}{s_{1}
	(\omega)-s_{0} (\omega)} \,d\lambda^{*} (\omega ,s)\times E_{\lambda}F\\
	&=\int_{\Upsilon (x,x';\alpha T^{-1})} \frac{1}{\tau (u)}
	\,dL(u) \times E_{\lambda}F \\
	&\leq  T^{-1} L (\Upsilon (x,x';\alpha T^{-1}))E_{\lambda F}
\end{align*}
where $\tau (u)$ is the crossing time of $D(x;2\alpha T^{-1})$ by the
geodesic with initial tangent vector $u$. 
Using once again the fact that (normalized) Liouville measure is the
product of normalized hyperbolic area with Lebesgue angular measure,
we see that for a suitable constant $C'<\infty$,
\[
	L (\Upsilon (x,x';\alpha T^{-1}))\leq C' (\alpha T^{-1})^{2} \times CT^{-1};
\]
thus, $\lambda (H (x,x';\alpha T^{-1}))=O (T^{-2})$.
\end{proof}

\section{Decomposition of the Events
  $N_{A,B}=k$}\label{sec:decomposition} 

In this section we show that the events
$\{\omega \,:\, N_{A,B} (\omega ) =k\}$ can be approximated by sets
consisting of those sequences $\omega \in \Sigma$ whose first $n$
letters contain exactly $k$ occurrence of certain ``magic subwords''
each of length $m= (\log n)^{2} (\approx \log T)^{2}$.  As in
section~\ref{sec:asymptotics}, let $A,B$ be any two disjoint closed
arcs, each with nonempty interior, on the boundary of the ball
$B (0,\alpha)$ in the unit tangent space $T_x S$.  We identify the
neighborhood $D(x,\alpha T^{-1})$ in $S (=\mathcal{P})$
with the ball $B(0,\alpha )$ via the scaled exponential mapping
(cf. equation~\eqref{eq:alphaNH}), so the arcs $A,B$ are identified with arcs in
$\partial D(x,\alpha T^{-1})$, denoted by $A^{T},B^{T}$,
whose arc-lengths are roughly proportional to $T^{-1}$.  Recall that
$N_{A,B} (\omega)$ is the number of crossings of the neighborhood
$D(\omega,\alpha T^{-1}) $ from boundary arc $A^{T}$ to
boundary arc $B^{T}$ by the geodesic segment
\begin{displaymath}
  p\circ\pi \left (\cup_{i=0}^{n-1}\mathcal{F}_{\sigma^{i}\omega}\right),
\end{displaymath}
where $n=[T/E_{\lambda}F]$, $\pi$ is the map from the suspension space $\Sigma_F$ to $T^1(S)$, and $p$ is the natural projection down from $T^1(S)$ to $S$. Recall also (section~\ref{sec:heuristics})
that $\Sigma (A,B;T)$ is the set of all sequences $\omega \in \Sigma$
such that the geodesic segment $p\circ\pi(\mathcal{F}_{\omega})$
intersects $D(x, \alpha T^{-1})$ in a geodesic segment with
terminal endpoint in the boundary arc $B^{T}$ that extends to a
geodesic crossing from boundary arc $A^{T}$ to boundary arc $B^{T}$.

\begin{lemma}\label{lemma:n=k}
  The set $\xset {\omega\,:\, N_{A,B}=k}$ differs from the set
  \begin{displaymath}
    \xset {\omega \in \Sigma \,:\,
      \sum_{i=0}^{n-1}\mathbf{1}_{\Sigma(A,B;T)}(\sigma^{i}\omega)=k }
  \end{displaymath}
  by a set of $\lambda-$measure tending to $0$ as $T \rightarrow \infty$.
\end{lemma}

\begin{proof}
  The symmetric difference of the two sets is contained in the set of
  all $\omega\in\Sigma$ such that either (a) $\omega \in
  \Sigma(A,B;T)$ or $\sigma^{n-1}\omega \in \Sigma(A,B;T)$, or (b) at
  least one of the geodesic segments $p\circ \pi
  (\mathcal{F}_{\sigma^{i}\omega})$, where $0 \leq i < n$, makes more
  than one visit to the neighborhood $D(x; \alpha T^{-1})$.
  Propositions~\ref{proposition:no-quick-entries} and
  \ref{proposition:no-quick-returns} ensure that this set has
  measure $\rightarrow 0$ as $T \rightarrow \infty$.
\end{proof}

Next, recall that the sequence space $\Sigma$ is equipped with the
metric $d(\omega,\omega')= e^{-n(\omega,\omega')}$, where
$n(\omega,\omega')$ is the minimum nonnegative integer $j$ such that
the sequences $\omega,\omega'$ differ in the $\pm j$ entry, and that
the suspension space $\Sigma_{F}$ inherits from $d$ an induced
``taxicab'' metric satisfying the inequality
\eqref{eq:suspFlowHolder}. Cylinder sets $\Sigma_{m}(\omega)$ are open
balls in $\Sigma$ relative to the metric $d$
(cf. equation~\eqref{eq:cylinder}). Since
the semi-conjugacy  $\pi: \Sigma_{F}\rightarrow T^{1}S$ is H\"{o}lder,
it follows that if $\omega, \omega' \in \Sigma$ are at distance $<
\varepsilon$, then the geodesics $p\circ\pi (\phi_{t}(\omega))$ and
$p\circ\pi (\phi_{t}(\omega'))$ remain at distance $< e^{\max F}\varepsilon$
for all $|t|\leq \max_{\Sigma}F$. Thus, if one of the geodesic segments
crosses from arc $A^{T}$ to $B^{T}$ without coming sufficiently near
one of the endpoints of either $A^{T}$ or $B^{T}$, then so will the other;
and similarly, if one  stays sufficiently far
away from the arcs $A^{T},B^{T}$ then so will the other. In
particular, if
\begin{equation}\label{eq:m:n}
	 m= (\log n)^{2} (\approx (\log T)^{2}),
\end{equation}
then for every $\omega' \in \Sigma_{m}(\omega)$ the geodesic segments
$p\circ\pi (\phi_{t}(\omega))_{|t| \leq \max F}$ and
$p\circ\pi (\phi_{t}(\omega'))_{|t| \leq \max F}$ remain at distance
less than $n^{-C \log n}$ for a suitable constant $C>0$,  and hence,
for every $\omega \in \Sigma$ one of the following will hold:
\begin{enumerate}
\item [(i)] $\Sigma_{m}(\omega) \subset \Sigma(A,B;T)$; 
\item [(ii)] $\Sigma_{m}(\omega)\subset \Sigma(A,B;T)^{c}$; or
\item [(iii)] for every $\omega' \in\Sigma_{m}(\omega)$ the geodesic
  segment $p\circ\pi (\phi_{t}(\omega'))_{|t| \leq \max F}$  will pass
  within distance $C' n^{-C \log n}$ of one of the endpoints of 
  arc $A^{T}$ or arc $B^{T}$.
\end{enumerate}

%
%\begin{figure}[h!]
% \centering
% \includegraphics[scale=.7]{}
% \caption{If $\gamma$ crosses $A$ and $B$ away from their endpoints
% (dark blue) then any $\gamma'$ in the blue envelope does the
% same. Likewise, if $\gamma$ stays away from $A$ or $B$ (dark red)
% then any $\gamma'$ in the red envelope does the same.}
 %\label{fig:Nearby geodesics}
%\end{figure}

\begin{proposition}\label{proposition:decomposition}
For each pair $A,B$ of non-overlapping closed arcs of $\partial B
(0,\alpha)$ and each $T\geq 1$ there exist sets finite subsets
$\mathcal{J}_{1}\subset \mathcal{J}_{2}$ 
of $\Sigma$ such that

  \begin{enumerate}
\item [(A)] for each 
$\omega \in\mathcal{J}_{1}$ the cylinder set $\Sigma_{m}(\omega)$ is of type (i);
\item [(B)]  for each 
$\omega \not\in\mathcal{J}_{2}$ the cylinder set $\Sigma_{m}(\omega)$ is of type (ii); and
\item [(C)] the set 
  $\cup_{ \omega \in \mathcal{J}_{2}\setminus\mathcal{J}_{1}}\Sigma_{m}(\omega)$ has
  $\lambda-$measure less than $o(n^{-r})$ for all $r>0$.
\end{enumerate}

\end{proposition}

\begin{proof}
The sets  $\mathcal{J}_{1}$  and $\mathcal{J}_{2}\setminus
\mathcal{J}_{1}$ are gotten by selecting representatives of each
cylinder $\Sigma_{m}(\omega)$ of type (i)
and  type (iii), respectively. What must be proved is assertion (C). 

By construction, for every $\omega '$ not in $\cup_{ \omega \in
  \mathcal{J}_{2}}\Sigma_{m}(\omega)$ the geodesic segment $p\circ\pi
(\phi_{t}(\omega,0))_{t \leq \max F}$ must pass within distance $C'n^{-C
  \log n}=C'' T ^{-C''' \log T}$ of one of the four endpoints of
$A^{T}$ or $B^{T}$.  Proposition~\ref{proposition:no-quick-entries}
implies that the normalized Liouville measure of the set of geodesic
rays that enter one of these four regions  by time $\max F$ is of
order $O(T ^{-C''' \log T})=O(n^{-C \log n})$. 
\end{proof}
 
\begin{definition}\label{definition:magicSubwords}
Given arcs $A,B$ as in Proposition \ref{proposition:decomposition} and
$T\geq 1$, define the \emph{magic subwords} for the triple $(A,B;T)$
to be the words
\begin{displaymath}
  \omega_{-m}\omega_{-m+1}\cdots \omega_{m}
\end{displaymath}
where $\omega \in \mathcal{J}_{1}$.
\end{definition}

\begin{corollary}\label{corollary:magicSubwordCounts}
The symmetric difference between the sets $\{\omega \,:\, N_{A,B}=k\}$
and the set of $\omega \in \Sigma$ with exactly $k$ occurrences of one
of the magic subwords in the segment $\omega_{1}\omega_{2}\dotsb
\omega_{n}$ has $\lambda -$measure  $\rightarrow 0$ as $T \rightarrow\infty$.
\end{corollary}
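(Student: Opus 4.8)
The substance of the corollary is already contained in Proposition~\ref{proposition:decomposition}; the remaining work is a union bound and a change of measure, so I sketch the bookkeeping. Write $E_{1}$ (resp.\ $E_{2}$) for the union, over the pairs $J^{\pm}_{m}(\omega^{\pm})$ in $\mathcal{J}_{1}$ (resp.\ $\mathcal{J}_{2}$), of the cylinder rectangles $\Sigma^{-}_{m}(\omega^{-})\times\Sigma^{+}_{m}(\omega^{+})$, regarded as subsets of $\Sigma$. By Proposition~\ref{proposition:boundaryCorrespondence}(E) these rectangles are pairwise disjoint, and parts (A)--(B) of Proposition~\ref{proposition:decomposition} give the sandwich $E_{1}\subseteq\Sigma(A,B;T)\subseteq E_{2}$. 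First I would recall, from the discussion in Section~\ref{ssec:strategy} and from Proposition~\ref{proposition:boundaryCorrespondence}, that---assuming without loss of generality that $x$ lies in the interior of $\mathcal{P}$, so that $D(x,\alpha T^{-1})\subset\mathrm{int}\,\mathcal{P}$ for large $T$---one has, for $T$ large, the exact identity $N_{A,B}(\omega)=\#\{0\le j<n:\sigma^{j}\omega\in\Sigma(A,B;T)\}$: the $j$-th geodesic crossing of $\mathcal{P}$ is the arc inside $\mathcal{P}$ of the lifted geodesic with ideal endpoints $\xi_{\pm}((\sigma^{j}\omega)^{\pm})$, and this arc runs from $A$ to $B$ precisely when $\sigma^{j}\omega\in\Sigma(A,B;T)$. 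Unwinding Definition~\ref{definition:magicSubwords} the same way---the magic subword attached to a pair in $\mathcal{J}_{1}$ is the length-$2m$ word $\omega_{-m}\cdots\omega_{-1}\omega_{0}\cdots\omega_{m-1}$ read off any $\omega$ in the corresponding rectangle---one sees that $\sigma^{j}\omega\in E_{1}$ if and only if some magic subword occupies the window $\omega_{j-m}\cdots\omega_{j+m-1}$; so, with the magic-subword count indexed over the same $n$ base crossings $j=0,\dots,n-1$ (the intended reading of ``occurrences in $\omega_{1}\cdots\omega_{n}$''), that count is $\#\{0\le j<n:\sigma^{j}\omega\in E_{1}\}$.

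Next I would make a termwise comparison: if $\sigma^{j}\omega\notin E_{2}\setminus E_{1}$, then either $\sigma^{j}\omega\in E_{1}\subseteq\Sigma(A,B;T)$ or $\sigma^{j}\omega\notin E_{2}\supseteq\Sigma(A,B;T)$, so in either case $\mathbf{1}\{\sigma^{j}\omega\in\Sigma(A,B;T)\}=\mathbf{1}\{\sigma^{j}\omega\in E_{1}\}$. Hence, if no shift $\sigma^{j}\omega$ with $0\le j<n$ lands in $E_{2}\setminus E_{1}$, then $N_{A,B}(\omega)$ equals the magic-subword count and $\omega$ is not in the symmetric difference; contrapositively,
\[
\{\omega:N_{A,B}=k\}\,\triangle\,\{\omega:\#\{\text{magic occurrences}\}=k\}\ \subseteq\ \bigcup_{j=0}^{n-1}\sigma^{-j}\bigl(E_{2}\setminus E_{1}\bigr).
\]
By shift-invariance of $\lambda$ this set has $\lambda$-measure at most $n\,\lambda(E_{2}\setminus E_{1})$, so everything reduces to bounding $\lambda(E_{2}\setminus E_{1})$.

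Because the rectangles are disjoint, $E_{2}\setminus E_{1}$ is exactly the union of the rectangles coming from the ``bad'' pairs $\mathcal{J}_{2}\setminus\mathcal{J}_{1}$. The set $U$ appearing in Proposition~\ref{proposition:decomposition}(C)---unit tangent vectors lying on a crossing of $\mathcal{P}$ by a geodesic whose ideal endpoints fall in a bad pair---contains the image under the semi-conjugacy $\pi$ of the fibered set $\{(\omega,s):\omega\in E_{2}\setminus E_{1},\ 0\le s<F(\omega)\}$, so this fibered set is contained in $\pi^{-1}(U)$; since $\pi$ pushes $\lambda^{*}$ forward to normalized Liouville measure, the fibered set has $\lambda^{*}$-measure at most the Liouville measure of $U$, which is $o(n^{-r})$ for every $r>0$ by Proposition~\ref{proposition:decomposition}(C). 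On the other hand, the measure-transfer formula~\eqref{eq:measure-transfer} shows that the $\lambda^{*}$-measure of the fibered set equals $\bigl(\int_{\Sigma}F\,d\lambda\bigr)^{-1}\int_{E_{2}\setminus E_{1}}F\,d\lambda$, which is at least $(\min F)\bigl(\int_{\Sigma}F\,d\lambda\bigr)^{-1}\lambda(E_{2}\setminus E_{1})$ since $F$, a strictly positive continuous function on the compact space $\Sigma$, is bounded away from $0$. Therefore $\lambda(E_{2}\setminus E_{1})=o(n^{-r})$ for every $r>0$; applying this with $r+1$ in place of $r$ gives $n\,\lambda(E_{2}\setminus E_{1})=o(n^{-r})$ for every $r>0$, which is the assertion.

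There is no deep obstacle here once Proposition~\ref{proposition:decomposition} is in hand. The two points that need care are: (i) the bookkeeping matching the geometric event ``the $j$-th crossing runs from $A$ to $B$'' with the symbolic event $\sigma^{j}\omega\in\Sigma(A,B;T)$, and matching occurrences of magic subwords with visits of the orbit to $E_{1}$, keeping both counts indexed over the same $n$ shifts so that only the genuinely ambiguous rectangles $E_{2}\setminus E_{1}$ contribute to the symmetric difference (the boundary case $x\in\partial\mathcal{P}$ being handled as in the proof of Proposition~\ref{proposition:measure-asymptotics}); and (ii) transferring Proposition~\ref{proposition:decomposition}(C), which is phrased in terms of Liouville measure of tangent vectors, into a bound on the $\lambda$-measure of the exceptional symbolic set, via~\eqref{eq:measure-transfer} and the positive lower bound on $F$. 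The factor $n$ produced by the union bound is harmless precisely because the exceptional set is $o(n^{-r})$ for \emph{all} $r$, not just for some fixed $r$.
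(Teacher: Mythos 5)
Your argument is correct and is essentially the proof the paper leaves implicit: the corollary is stated there without proof as an immediate consequence of Proposition~\ref{proposition:decomposition}, and your sandwich $E_{1}\subseteq\Sigma(A,B;T)\subseteq E_{2}$, the union bound over the $n$ shifts via shift-invariance of $\lambda$, and the transfer from the Liouville bound in part (C) to a $\lambda$-bound via \eqref{eq:measure-transfer} and $\min F>0$ are exactly the intended bookkeeping, with the factor $n$ harmless because the exceptional set is super-polynomially small. Your reading of ``occurrences in $\omega_{1}\cdots\omega_{n}$'' as windows indexed by the same $n$ base crossings is also the intended one (under the strictly literal reading the boundary windows would contribute only $o(1)$, not $o(n^{-r})$), and flagging that point was appropriate.
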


\begin{remark}\label{remark:magicSubwords}
The set of magic subwords for a particular value of $T$ will in
general have no clear relationship to the magic subwords for a
different value of $T$. 
\end{remark}

\begin{proposition}\label{proposition:magicWordMeasure}
For each $T$ let $\mathcal{M}=\mathcal{M}_{T}$ be the set of magic
subwords for a fixed pair $A,B$ of boundary arcs and fixed $\alpha
>0$.  Then 
\begin{equation}\label{eq:magicWordMeasure}
	\lim_{T \rightarrow \infty }T\lambda \{\omega \,:\,
	(\omega_{-m}\omega_{-m+1}\dotsb \omega_{m})\in \mathcal{M}\} 
	= \frac{1}{2}  \kappa \beta_{A,B}E_{\lambda}F
\end{equation}
where $\beta_{A,B}$ is defined by equation \eqref{eq:beta}.
\end{proposition}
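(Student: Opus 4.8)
The plan is to realize the event in \eqref{eq:magicWordMeasure} as an explicit \emph{disjoint} union of cylinder sets, to trap that union between $\Sigma(A,B;T)$ and a slightly larger set of super-polynomially small measure, and then to invoke Proposition~\ref{proposition:measure-asymptotics} together with Proposition~\ref{proposition:decomposition}. Throughout, $m=(\log n)^{2}$ and $n=n(T)=[T/E_{\lambda}F]$ as in \eqref{eq:m:n}--\eqref{eq:n}, so that $m\approx(\log T)^{2}$ and $n$ is of the same order of magnitude as $T$; write $W_{T}:=\{\omega\in\Sigma:\ (\omega_{-m},\dots,\omega_{m-1})\in\mathcal{M}_{T}\}$ for the set whose $\lambda$-measure appears on the left of \eqref{eq:magicWordMeasure}.

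First I would unpack the definitions. By Definition~\ref{definition:magicSubwords} a word of length $2m$ lies in $\mathcal{M}_{T}$ exactly when it has the form \eqref{eq:magic-subword} for some pair $(J_{m}^{-}(\omega^{-}),J_{m}^{+}(\omega^{+}))$ in $\mathcal{J}_{1}$. Since $J_{m}^{\pm}(\omega^{\pm})=\xi_{\pm}(\Sigma_{m}^{\pm}(\omega^{\pm}))$ depends only on the first $m$ coordinates of $\omega^{\pm}$, and since by property~(E) of Proposition~\ref{proposition:boundaryCorrespondence} distinct generation-$m$ cylinders have images with disjoint interiors, the correspondence between pairs in $\mathcal{J}_{1}$ and magic subwords is a bijection; recalling $\omega^{-}=\omega_{-1}\omega_{-2}\cdots$ and $\omega^{+}=\omega_{0}\omega_{1}\cdots$, one then sees that $W_{T}$ equals the \emph{disjoint} union of the products $\Sigma_{m}^{-}(\omega^{-})\times\Sigma_{m}^{+}(\omega^{+})$ over all pairs in $\mathcal{J}_{1}$. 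Let $R_{T}$ denote the analogous union taken instead over the pairs in $\mathcal{J}_{2}\setminus\mathcal{J}_{1}$.

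Next I would establish the sandwich $W_{T}\subseteq\Sigma(A,B;T)\subseteq W_{T}\cup R_{T}$. If $\omega\in W_{T}$, then the ideal endpoints $\xi_{\pm}(\omega^{\pm})$ of its lifted geodesic lie in a pair of type~(i), so that geodesic crosses $D(x;\alpha T^{-1})$ from $A$ to $B$; that is, $\omega\in\Sigma(A,B;T)$. Conversely, if $\omega\in\Sigma(A,B;T)$, then its geodesic meets both $A$ and $B$, so the pair $(J_{m}^{-}(\omega^{-}),J_{m}^{+}(\omega^{+}))$ cannot be of type~(ii); by the trichotomy stated just before Proposition~\ref{proposition:decomposition} it is therefore of type~(i), which puts $\omega$ in $W_{T}$, or of type~(iii), which puts $\omega$ in $R_{T}$ since $\mathcal{J}_{2}$ was defined to exclude only the type-(ii) pairs.

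Finally I would show $T\lambda(R_{T})\to0$ and conclude by a squeeze. By Proposition~\ref{proposition:decomposition}(C), the normalized Liouville measure of the set $U_{T}\subseteq T^{1}S$ of tangent vectors whose geodesics lift to hyperbolic geodesics with ideal endpoints in some pair of $\mathcal{J}_{2}\setminus\mathcal{J}_{1}$ is $o(n^{-r})$ for every $r>0$. By Proposition~\ref{proposition:boundaryCorrespondence}(B) and the semi-conjugacy \eqref{eq:semi-conjugacy}, the flow-saturated block $\{(\omega,s):\omega\in R_{T},\ 0\le s<F(\omega)\}$ of $\Sigma_{F}$ is carried by $\pi$ into $U_{T}$, hence lies in $\pi^{-1}(U_{T})$; since $\pi$ pushes $\lambda^{*}$ forward to the normalized Liouville measure, this block has $\lambda^{*}$-measure at most $o(n^{-r})$, and, $F$ being continuous and strictly positive on the compact space $\Sigma$, the transfer formula \eqref{eq:measure-transfer} then gives $\lambda(R_{T})\le(E_{\lambda}F/\min_{\Sigma}F)\,o(n^{-r})=o(n^{-r})$ for every $r$. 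In particular $T\lambda(R_{T})\to0$, and the sandwich yields $\lambda(\Sigma(A,B;T))-\lambda(R_{T})\le\lambda(W_{T})\le\lambda(\Sigma(A,B;T))$; multiplying by $T$, letting $T\to\infty$, and applying Proposition~\ref{proposition:measure-asymptotics} produces $\lim_{T\to\infty}T\lambda(W_{T})=\frac{1}{2}\kappa_{g}\beta_{A,B}E_{\lambda}F$, which is \eqref{eq:magicWordMeasure}. The one step demanding genuine care, rather than mere unpacking of definitions, is this last transfer of a Liouville-measure bound on $T^{1}S$ into a $\lambda$-measure bound on $\Sigma$ via the suspension representation; everything else is bookkeeping.
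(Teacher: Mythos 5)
Your argument is correct and follows exactly the route the paper intends: the paper's own proof is the one-line remark that the result ``follows directly from Propositions~\ref{proposition:decomposition} and \ref{proposition:measure-asymptotics},'' and your sandwich of the magic-subword event between $\Sigma(A,B;T)$ and a residual set controlled by Proposition~\ref{proposition:decomposition}(C), transferred to $\lambda$ via \eqref{eq:measure-transfer}, is precisely the bookkeeping that one-liner leaves implicit. No changes needed.
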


\begin{proof}
This follows directly from
Propositions~\ref{proposition:decomposition} and
\ref{proposition:measure-asymptotics}. 
\end{proof}

\begin{corollary}\label{corollary:quickReturns}
If $\omega \in \Sigma$ is chosen randomly according to $\lambda$, then
the probability that the initial segment $\omega_{1}\omega_{2}\dotsb
\omega_{[\\log T]^{\kappa}}$ contains a magic subword converges to $0$
as $T \rightarrow \infty$. Similarly, the probability that the
segment $\omega_{1}\omega_{2}\dotsb
\omega_{n}$ contains magic subwords separated by fewer than $(\log
n)^{\kappa }$ letters converges to zero as $n \rightarrow \infty$. 
\end{corollary}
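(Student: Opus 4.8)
The plan is to derive Corollary~\ref{corollary:quickReturns} directly from Proposition~\ref{proposition:quickReturns} by translating the two probabilistic statements about the geodesic ray into statements about the symbolic sequence $\omega$, using the fact that magic subwords in $\omega_{1}\omega_{2}\dotsb\omega_{n}$ correspond (up to the negligible symmetric difference quantified in Corollary~\ref{corollary:magicSubwordCounts}) to crossings of the disk $D(x;\alpha T^{-1})$ by the associated geodesic segment. The key bookkeeping point is that a block of $\ell$ consecutive letters $\omega_{j+1}\dotsb\omega_{j+\ell}$ of $\omega$ corresponds, via Proposition~\ref{proposition:boundaryCorrespondence}(A) and (C), to a segment of the lifted geodesic that crosses $\mathcal{P}$ exactly $\ell$ times, hence (by Birkhoff's theorem applied to the height function $F$) to a segment of hyperbolic length comparable to $\ell\,E_{\lambda}F$; and under the identification $n=[T/E_{\lambda}F]$ this is comparable to $\ell$ on the scale $T$.

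First I would handle the first assertion. The event that $\omega_{1}\dotsb\omega_{[(\log T)^{\kappa}]}$ contains a magic subword means (by Definition~\ref{definition:magicSubwords} and the type~(i) characterization) that the initial portion of the lifted geodesic, which crosses $\mathcal{P}$ only about $(\log T)^{\kappa}$ times, already passes through the arcs $A,B$ of $\partial D(x;\alpha T^{-1})$ and in particular enters $D(x;\alpha T^{-1})$. The number of polygon crossings being $O((\log T)^{\kappa})$ forces the entry time (on $S$) to be $O((\log T)^{\kappa}\cdot\max F) = O((\log T)^{\kappa})$, so this event is contained (up to the $o(n^{-r})$ error of Corollary~\ref{corollary:magicSubwordCounts}) in the event that $\gamma$ enters $D(x;\alpha T^{-1})$ before time $C(\log T)^{\kappa}$, which has $\lambda^{*}$-probability tending to $0$ by the first half of Proposition~\ref{proposition:quickReturns} (applied with $\kappa$ replaced by $\kappa$ and the constant absorbed, since $(\log T)^{\kappa}\approx(\log n)^{\kappa}$). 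Since $\lambda$ is the pullback of Liouville measure along the semiconjugacy and the exceptional set where the semiconjugacy fails is $\lambda$-null, probabilities computed under $\lambda$ agree with those under normalized Liouville measure, so the conclusion transfers.

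For the second assertion the argument is parallel: if $\omega_{1}\dotsb\omega_{n}$ contains two magic subwords separated by fewer than $(\log n)^{\kappa}$ letters, then (again up to the $o(n^{-r})$ symmetric-difference error) the geodesic enters $D(x;\alpha T^{-1})$ at some time $t\le T$ and re-enters after at most $(\log n)^{\kappa}\cdot\max F = O((\log T)^{\kappa})$ additional units of time, which is exactly the event $H''_{T}$ estimated in the second half of Proposition~\ref{proposition:quickReturns} to have Liouville measure $O(T^{-1}(\log T)^{\kappa})\to 0$. Summing the $o(n^{-r})$ error over the (at most $n$) possible positions of a magic subword still yields a total error $o(n^{-r+1})\to 0$, so no harm is done. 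I expect the only genuinely delicate point to be making the correspondence ``$\ell$ letters of $\omega \leftrightarrow$ geodesic length $\approx \ell\,E_{\lambda}F$'' uniform enough: one must either invoke a uniform (not merely almost-sure) control on the Birkhoff sums $\sum_{j<\ell}F\circ\sigma^{j}$ for the relevant range $\ell \asymp (\log n)^{\kappa}$ and $\ell \asymp n$, or — more simply — use only the crude two-sided bound $\ell\cdot\min F \le \sum_{j<\ell}F\circ\sigma^{j}\le \ell\cdot\max F$, which suffices here since we only need the entry/re-entry times to be $O((\log T)^{\kappa})$ and $\le T+O((\log T)^{\kappa})$ respectively, not their precise values. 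With that crude bound in hand, everything else is a routine invocation of Proposition~\ref{proposition:quickReturns} together with Corollary~\ref{corollary:magicSubwordCounts}.
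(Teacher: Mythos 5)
Your argument is correct and is essentially the paper's own: the paper gives the corollary no written proof at all, treating it exactly as you do, as an immediate translation of Proposition~\ref{proposition:quickReturns} via the correspondence between letters of $\omega$ and crossings of $\mathcal{P}$ together with the two-sided boundedness of the height function $F$ (your appeal to Corollary~\ref{corollary:magicSubwordCounts} is actually superfluous, since by the type~(i) definition a magic subword occurrence forces a disk crossing exactly, with no error term). One cosmetic point: with the crude bound the entry time in the second assertion is only $O(T)$ rather than $T+O((\log T)^{\kappa})$, but the proof of Proposition~\ref{proposition:quickReturns} absorbs such a constant factor in the time horizon without change.
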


\begin{proof}
  This follows directly from
  Propositions~\ref{proposition:no-quick-entries} and
  \ref{proposition:no-quick-returns}.
\end{proof}

\section{Proof of
Propositions~\ref{proposition:reformulation}--\ref{proposition:reformulation-pairs}}   
\label{sec:final-step} 

\begin{proof}
[Proof of \eqref{eq:reformulation} for $r= 1$]
Consider first the case $r=1$. In this case we are given a single pair
$(A,B)$ of  non-overlapping boundary arcs of $\partial B
(0,\alpha)$; we must show that for any integer $k\geq 0$,
\begin{equation}\label{eq:obj-r1}
	\lim_{n \rightarrow \infty} \lambda \{\omega \in \Sigma  \,:\,
	N_{A,B} (\omega)=k\} = \frac{( \kappa   \beta_{A,B}/2)^{k}}{k!}
	e^{- \kappa \beta_{A,B}/2},
\end{equation}
where $\beta_{A,B}$ is defined by equation~\eqref{eq:beta}. Recall
that $N_{A,B} (\omega)$ is the number of geodesic segments in the
collection $I_{n} (\omega)$ that cross the target disk $D(x;\alpha
T^{-1})$ from arc $A$ to arc $B$. By
Corollary~\ref{corollary:magicSubwordCounts}, $N_{A,B} (\omega)$ is
well-approximated by the number $N'_{A,B}$ of magic subwords in the
word $\omega_{1}\omega_{2}\dotsb \omega_{n}$; in particular, for any
$k\geq 0$, the symmetric difference between the events $\{N_{A,B}=k
\}$ and $\{N'_{A,B} =k\}$ has $\lambda -$measure tending to
$0$. Consequently, it suffices to prove that \eqref{eq:obj-r1} holds
when $N_{A,B}$ is replaced by $N'_{A,B}$.

Recall (sec.~\ref{ssec:regeneration}) that any Gibbs process is the
natural projection of a list  process. Thus, on some probability space
there exists a sequence $W_{1},W_{2},W_{3},\dotsc$ of independent
random words of random lengths $\tau_{i}$, such that the infinite
sequence obtained by concatenating  $W_{1},W_{2},W_{3},\dotsc$ has
distribution $\lambda$, that is, for any Borel subset $B$ of
$\Sigma^{+}$,
\[
	P\{W_{1}\cdot W_{2} \cdot W_{3} \dotsb  \in B\}=\lambda (B).
\]
All but the first word $W_{1}$ have the same distribution, and the
lengths $\tau_{i}$ have exponentially decaying tails (cf. inequality
\eqref{eq:exp-regeneration}).  Since the magic subwords are of length
$[\log n]^{2}$, any occurrence of one will typically straddle a large
number of consecutive words in the sequence $W_{i}$. Thus, to
enumerate occurrences of magic subwords, we shall break the sequence
$\{W_{i} \}_{i\geq 1}$ into blocks of length $m=[\log n]^{3}$, and
count magic subwords  block by block. Set
\begin{align*}
	\tilde{W}_{1}&=W_{1}W_{2}\dotsb W_{m},\\
	\tilde{W}_{2}&=W_{m+1}W_{m+2}\dotsb W_{2m},\\
	\tilde{W}_{3}&=W_{2m+1}W_{2m+2}\dotsb W_{3m},
\end{align*}
etc., and denote by $\tilde{\tau}_{k}=\sum_{i=mk-k+1}^{mk}\tau_{i}$
the length (in letters) of the word $\tilde{W}_{k}$.

\begin{claim}\label{claim:no-long-blocks}
For each $C>E\tau_{2}$, there exists $\Lambda (C)>0$ such that for any integer $k\geq 1$
\begin{equation}\label{eq:LD-1}
	P\left\{ \sum_{i=1}^{k} \tau_{i} \geq Ck\right\}\leq e^{-k\Lambda (C)},
\end{equation}
and for all sufficiently large  $C<\infty$, 
\begin{equation}\label{eq:LD-2}
	\lim_{n \rightarrow \infty}P\{\max_{k\leq n} \tilde{\tau}_{k}\geq Cm\}=0.
\end{equation}
The function $C\mapsto \Lambda (C)$ is convex and satisfies
$\liminf_{C \rightarrow \infty}\Lambda (C)/C>0$.
\end{claim}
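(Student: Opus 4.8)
The plan is to read \eqref{eq:LD-1} as the Cram\'er large-deviation upper bound for partial sums of the excursion lengths $\tau_i$ and to derive it from the exponential Markov (Chernoff) inequality, and then to obtain \eqref{eq:LD-2} from a crude union bound that exploits the super-polynomial block size $m=[\log n]^3$. First I would record the only probabilistic input: by Proposition~\ref{proposition:regeneration} the $\tau_i$ are independent, those with $i\ge 2$ are i.i.d., and by \eqref{eq:exp-regeneration} every $\tau_i$ has an exponentially decaying tail $P(\tau_i\ge n)\le C\alpha^n$. Hence the moment generating functions $\phi_i(t):=E e^{t\tau_i}$ are finite on an interval $[0,t_0)$ with $t_0\ge\log(1/\alpha)>0$. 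Write $\phi:=\phi_2$ for the common m.g.f.\ of the i.i.d.\ lengths and $\psi:=\log\phi$; then $\psi$ is smooth and convex on $[0,t_0)$, with $\psi(0)=0$ and $\psi'(0)=E\tau_2$.

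Next I would define
\[
  \Lambda(C):=\sup_{0\le t<t_0}(tC-\psi(t)).
\]
As a supremum of affine functions of $C$, $\Lambda$ is convex, and it is nonnegative (take $t=0$). For $C>E\tau_2$ the map $t\mapsto tC-\psi(t)$ has positive derivative $C-E\tau_2$ at $t=0$, so its supremum is strictly positive; thus $\Lambda(C)>0$ precisely on $(E\tau_2,\infty)$. Finally, fixing any $t_1\in(0,t_0)$ and using it as a (suboptimal) test point gives $\Lambda(C)\ge t_1C-\psi(t_1)$, so $\Lambda(C)/C\ge t_1-\psi(t_1)/C\to t_1$ as $C\to\infty$, whence $\liminf_{C\to\infty}\Lambda(C)/C\ge t_1>0$. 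This settles the last two assertions of the claim.

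For \eqref{eq:LD-1}, independence gives $E\exp(t\sum_{i=1}^k\tau_i)=\phi_1(t)\phi(t)^{k-1}$, so Markov's inequality yields, for $0\le t<t_0$,
\[
  P\Bigl\{\sum_{i=1}^k\tau_i\ge Ck\Bigr\}\le e^{-tCk}\phi_1(t)\phi(t)^{k-1}=\frac{\phi_1(t)}{\phi(t)}\,\exp\bigl(-k(tC-\psi(t))\bigr),
\]
and optimizing over $t$ produces a bound of the form $K(C)e^{-k\Lambda(C)}$, in which the prefactor $K(C)<\infty$ comes solely from the anomalous first excursion $W_1$ (for a partial sum not containing $\tau_1$ one gets $K(C)=1$ directly). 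Since $W_1$ also has exponentially decaying length, $K(C)$ is finite, and one absorbs it either by replacing $\Lambda(C)$ with a slightly smaller positive constant or, equivalently, by splitting $\{\sum_i\tau_i\ge Ck\}\subseteq\{\tau_1\ge\epsilon Ck\}\cup\{\sum_{i\ge 2}\tau_i\ge(1-\epsilon)Ck\}$ for small $\epsilon$ and bounding the two pieces separately; this yields \eqref{eq:LD-1}. For \eqref{eq:LD-2}, each $\tilde\tau_k$ is a sum of $m$ consecutive $\tau_i$, so \eqref{eq:LD-1} (with $k$ replaced by $m$) together with a union bound over $k\le n$ gives
\[
  P\Bigl\{\max_{k\le n}\tilde\tau_k\ge Cm\Bigr\}\le\sum_{k=1}^n P\{\tilde\tau_k\ge Cm\}\le n\,e^{-m\Lambda(C)}=\exp\bigl(\log n-[\log n]^3\Lambda(C)\bigr),
\]
which tends to $0$ for every $C$ with $\Lambda(C)>0$, i.e.\ for every $C>E\tau_2$, and in particular for all sufficiently large $C$.

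The only point beyond the routine Cram\'er computation is the uniformity of \eqref{eq:LD-1} over \emph{all} $k\ge 1$: the exceptional distribution of the first excursion $W_1$ leaves a multiplicative constant in the Chernoff bound that must be folded into $\Lambda(C)$. This is harmless, since the claim is invoked only with $k=m\to\infty$, where a bounded prefactor is immaterial.
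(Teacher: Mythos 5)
Your proposal is correct and follows essentially the same route as the paper: the paper also obtains \eqref{eq:LD-1} from Cram\'er's theorem (i.e.\ the Chernoff/Legendre-transform bound you spell out) applied to the i.i.d.\ excursion lengths with exponentially decaying tails, dismisses the anomalous first excursion exactly as you do (its length also has an exponential tail, so only a harmless prefactor appears), and deduces \eqref{eq:LD-2} by the same union bound over the $n$ blocks with $m=[\log n]^{3}$. Your explicit treatment of the first-excursion prefactor and your observation that the union bound already works for every $C>E\tau_{2}$ are minor refinements of, not departures from, the paper's argument.
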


\begin{proof}
[Proof of Claim~\ref{claim:no-long-blocks}] These estimates follow from the
exponential tail decay property \eqref{eq:exp-regeneration} by
standard results in the elementary large deviations theory, in
particular, \emph{Cram\'{e}r's theorem} (cf. \cite{dembo-zeitouni},
sec.~2.2) for sums of independent, identically distributed random
variables with exponentially decaying tails.  The block lengths
$\tilde{\tau}_{k}$ are gotten by summing the lengths $\tau_{i}$ of
their $m$ constituent words $W_{i}$; for all but the first block
$\tilde{W}_{1}$, these lengths are i.i.d. and satisfy
\eqref{eq:exp-regeneration}. Hence,
Cram\'{e}r's theorem guarantees\footnote{The length of the initial block has a different
distribution than the subsequent blocks, because the first excursion
of the list process has a different law than the rest. However, the
length of the first excursion also has an exponentially decaying tail,
by Proposition~\ref{proposition:regeneration}, so the upper bounds
given by Cram\'{e}r's theorem still apply.}  the existence of a convex rate
function $C\mapsto \Lambda (C)$ and constants
$C'=C' (C) <\infty$ such that inequality
\eqref{eq:LD-1} holds for all $k\geq 1$. Applying this inequality 
with $k=m=[\log n]^{3}$ yields
\[
	P\{\sum_{i=2}^{m+1} \tau_{i}\geq Cm\}\leq e^{-m \Lambda
	(C)}=n^{-3\Lambda (C)} .
\]
Cram\'{e}r's theorem also implies that $\Lambda (C)$ grows at least
linearly in $C$, so by taking $C$ sufficiently large we can ensure
that $\Lambda (C)\geq 2/3$, which makes the probability above smaller
than $n^{-2}$. Since there are only $n$ blocks, it follows that the
probability that $\tilde{\tau}_{k}\geq Cm$ for one of them is smaller
than $n^{-1}$.
\end{proof}

\begin{claim}\label{claim:first-block}
The probability that a magic subword occurs in the concatenation of
the first two blocks $\tilde{W}_{1}\tilde{W}_{2}$ converges to $0$ as
$T \rightarrow \infty$.
\end{claim}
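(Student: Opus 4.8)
The plan is to deduce this from the ``no quick entries'' estimate of Proposition~\ref{proposition:quickReturns} (in the form Corollary~\ref{corollary:quickReturns}), after first replacing the random length of the prefix $\tilde W_{1}\tilde W_{2}$ by a deterministic bound. The key observation is that $\tilde W_{1}\tilde W_{2}$ is itself an initial segment $\omega_{1}\omega_{2}\dotsb\omega_{\ell}$ of the $\lambda$-distributed sequence $\omega=W_{1}\cdot W_{2}\cdot W_{3}\dotsb$ supplied by Proposition~\ref{proposition:regeneration}, where $\ell=\tilde\tau_{1}+\tilde\tau_{2}$; in particular, a magic subword occurring anywhere in $\tilde W_{1}\tilde W_{2}$ --- even one straddling the junction of the two blocks --- is a magic subword occurring in the first $\ell$ letters of $\omega$. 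Each block $\tilde W_{k}$ is a concatenation of $m=[\log n]^{3}$ of the excursion words $W_{i}$, so $\ell$ is a sum of $2m$ excursion lengths, and by Claim~\ref{claim:no-long-blocks} (applying \eqref{eq:LD-1} to $\tilde\tau_{1}$ and to $\tilde\tau_{2}$) we may fix any $C>E\tau_{2}$ and conclude $P\{\tilde\tau_{1}\ge Cm\}\to0$ and $P\{\tilde\tau_{2}\ge Cm\}\to0$ as $n\to\infty$. Hence $\ell<2Cm$ with probability tending to $1$.

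On the event $\{\ell<2Cm\}$ we have $\ell\le[\log n]^{4}$ once $n$ is large (since $2Cm=2C[\log n]^{3}$), so any magic subword in $\tilde W_{1}\tilde W_{2}$ forces a magic subword in the initial segment $\omega_{1}\omega_{2}\dotsb\omega_{[\log n]^{4}}$. Since $n=[T/E_{\lambda}F]$ gives $\log n=\log T+O(1)$, Corollary~\ref{corollary:quickReturns} (with, say, $\kappa=5$, so that $[\log n]^{4}\le[\log T]^{\kappa}$ eventually) shows that the $\lambda$-probability that $\omega_{1}\dotsb\omega_{[\log n]^{4}}$ contains a magic subword tends to $0$ as $T\to\infty$. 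A union bound then yields
\begin{multline*}
	\lambda\{\tilde W_{1}\tilde W_{2}\text{ contains a magic subword}\}\\
	\le P\{\tilde\tau_{1}\ge Cm\}+P\{\tilde\tau_{2}\ge Cm\}+o(1),
\end{multline*}
and the right-hand side tends to $0$, which is the claim.

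The argument is short because the heavy lifting has already been done; the only point that genuinely uses that these are the \emph{first} two blocks (and not an arbitrary consecutive pair) is that the initial excursion word $W_{1}$, hence $\tilde W_{1}$, obeys a different law from $W_{2},W_{3},\dotsc$. This is exactly the wrinkle already addressed in the proof of Claim~\ref{claim:no-long-blocks}: by Proposition~\ref{proposition:regeneration} the length of the first excursion also has an exponentially decaying tail, so Cram\'{e}r's upper bound applies to $\tilde\tau_{1}=\sum_{i=1}^{m}\tau_{i}$ with the same rate function (up to constants), and \eqref{eq:LD-1} may be invoked for it as well. With that settled there is no remaining obstacle; the rest is the union bound above together with the monotonicity of ``contains a magic subword'' in the length of the prefix.
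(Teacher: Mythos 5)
Your argument is correct and is essentially the paper's own proof: discard the event that either of the first two blocks is longer than $C[\log n]^{3}$ using Claim~\ref{claim:no-long-blocks}, then observe that on the complement a magic subword in $\tilde{W}_{1}\tilde{W}_{2}$ must appear within the first $O([\log n]^{3})$ letters, which Corollary~\ref{corollary:quickReturns} rules out with probability tending to $1$. Your extra remarks (handling the different law of the first excursion, and choosing $\kappa$ so that $[\log n]^{4}\le[\log T]^{\kappa}$) only make explicit details the paper leaves implicit.
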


\begin{proof}
[Proof of Claim~\ref{claim:first-block}]
The event that one of the first two blocks has length $\geq C[\log
n]^{3}$ can be ignored, by Claim~\ref{claim:no-long-blocks}. On the
complementary event, an occurrence of a magic subword in
$\tilde{W}_{1}\tilde{W}_{2}$ would require that the magic subword
occurs in the first $2C[\log n]^{3}$ letters. By
Corollary~\ref{corollary:quickReturns}, the probability of this event
tends to $0$ as $T \rightarrow \infty$.
\end{proof}

It follows from Claim~\ref{claim:no-long-blocks} and
Corollary~\ref{corollary:quickReturns} that with probability tending
to $1$ as $n \rightarrow \infty$, no block $\tilde{W}_{k}$ among the
first $n$ will contain more than one magic subword. On this event,
then, the number $N_{A,B}$ of magic subwords that occur in the first
$n$ letters can be obtained by counting the number of blocks
$\tilde{W}_{k}$ that contain magic subwords and then adding the number
of magic subwords that straddle two consecutive blocks.

\begin{claim}\label{claim:no-straddles}
As $n \rightarrow \infty$, the probability that a magic subword
straddles two consecutive blocks $\tilde{W}_{k},\tilde{W}_{k+1}$ among
the first $n/[\log n]^{3}$ blocks converges to $0$.
\end{claim}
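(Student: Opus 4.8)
The plan is a first-moment (union-bound) estimate carried out at the level of the words $W_i$ of the list-process representation. Write $\ell=[\log n]^2$, so that by Definition~\ref{definition:magicSubwords} every magic subword has length $2\ell$, and recall that each block $\tilde W_k$ consists of $m=[\log n]^3$ consecutive words, so the first $N:=n/[\log n]^3$ blocks are made up of the words $W_1,\dots,W_n$ and the block boundaries sit at the word indices $m,2m,\dots,Nm$. The first step is to localize the start of a straddling occurrence: since each word has length at least $1$, a magic subword of length $2\ell$ that begins inside $W_j$ occupies letters lying in the words $W_j,\dots,W_{j'}$ with $j'\le j+2\ell-1$, and if such an occurrence straddles the $k$-th block boundary (i.e.\ meets both $W_{mk}$ and $W_{mk+1}$) then $mk\in\{j,\dots,j'\}$, forcing $mk-2\ell+1\le j\le mk$. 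Applying Markov's inequality to the number of (occurrence, boundary) incidences then bounds the probability in the claim by
\[
\sum_{k=1}^{N}\ \sum_{j=mk-2\ell+1}^{mk} P(\text{a magic subword begins in }W_j),
\]
and for $n$ large every index $j$ occurring here satisfies $j\ge m-2\ell+1\ge 2$.

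Next I would observe that this probability is independent of $j$. By the regenerative representation (Proposition~\ref{proposition:regeneration}) the words $W_2,W_3,\dots$ are i.i.d., so for every $j\ge 2$ the tail sequence $(W_j,W_{j+1},W_{j+2},\dots)$ has the same law as $(W_2,W_3,\dots)$; since ``a magic subword begins in $W_j$'' depends only on that tail, its probability equals a common value $q_n:=P(\text{a magic subword begins in }W_2)$. Hence the displayed bound is exactly $N\cdot 2\ell\cdot q_n$, and it remains to show $q_n=O(1/n)$.

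For the bound on $q_n$, let $S_i=\tau_1+\dots+\tau_i$ denote the letter position at the end of $W_i$. A magic occurrence beginning in one of $W_2,\dots,W_{n+1}$ begins at a letter position lying in the disjoint ranges of those words, hence in $\{1,\dots,S_{n+1}\}$; since distinct words have disjoint letter-ranges this gives $nq_n=\sum_{j=2}^{n+1}P(\text{magic begins in }W_j)\le E\big[\#\{\text{magic occurrences beginning at positions}\le S_{n+1}\}\big]$. By shift-invariance of $\lambda$ together with Proposition~\ref{proposition:magicWordMeasure} (and $n=[T/E_\lambda F]$), the probability that a magic subword begins at any fixed position is $\sim \kappa_g\beta_{A,B}/(2n)$, so the expected number of magic occurrences among the first $Cn$ positions is $O(1)$ for any fixed $C$. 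On the other hand, the exponential tail bound~\eqref{eq:exp-regeneration}, $E\tau_2<\infty$, and a Cram\'er estimate give $P(S_{n+1}>Cn)\to 0$ for $C$ large and even $E[S_{n+1}\mathbf{1}\{S_{n+1}>Cn\}]\to 0$; splitting the count according to $\{S_{n+1}\le Cn\}$ and bounding it crudely by $S_{n+1}$ on the complement then yields $nq_n=O(1)$. Combining, the probability in the claim is at most $N\cdot 2\ell\cdot q_n=\frac{n}{[\log n]^3}\cdot 2[\log n]^2\cdot O(1/n)=O(1/\log n)\to 0$.

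The localization step and the i.i.d.\ reduction are bookkeeping; the one place needing genuine care is the last paragraph, namely converting the letter-indexed density of magic subwords supplied by Proposition~\ref{proposition:magicWordMeasure} into the word-indexed probability $q_n$. This is exactly where the exponential control of the excursion lengths from Proposition~\ref{proposition:regeneration} is essential — without a linear-in-$n$ domination of the random positions $S_i$ one could not pass between the two indexings.
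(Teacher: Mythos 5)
Your argument is correct and follows essentially the same route as the paper's: a first-moment bound that localizes straddle starts to the last $O((\log n)^2)$ of the $[\log n]^3$ words in each block, uses the i.i.d.\ structure from Proposition~\ref{proposition:regeneration} to make the per-word probability constant, and converts the letter-indexed density of magic subwords (Proposition~\ref{proposition:magicWordMeasure}) into a word-indexed bound via the exponential-tail/Cram\'er control of $\sum_i\tau_i$. The only cosmetic difference is that you bound the per-word probability $q_n=O(1/n)$ directly (and sidestep Claim~\ref{claim:first-block} by noting the straddle windows exclude $W_1$), whereas the paper bounds the straddle probability by $[\log n]^{-1}$ times the expected total number of magic subwords; these are the same computation.
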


\begin{proof}
[Proof of Claim~\ref{claim:no-straddles}] A magic subword, since it
has length $[\log n]^{2}$, can only straddle consecutive blocks
$\tilde{W}_{k},\tilde{W}_{k+1}$ if it begins in one of the last $[\log
n]^{2}$ word $W_{i}$ of the $m=[\log n]^{3}$ words that constitute
$\tilde{W}_{k}$. The words $W_{i}$ are i.i.d. (except for $W_{1}$, and
by Claim~\ref{claim:first-block} we can ignore the possibility that a
magic subword begins in $\tilde{W}_{1}\tilde{W_{2}}$), so the
probability that a magic subword begins in $W_{i}$ does not depend on
$i$. Since only $[\log n]^{2}$ of the $[\log n]^{3}$ words in each
block $\tilde{W}_{k}$ would produce straddles, it follows that the
\emph{expected} number of magic subwords in
$\tilde{W}_{1}\tilde{W}_{2}\dotsb \tilde{W_{n/m}}$ is at least $[\log
n ]$ times the probability that a magic subword straddles two
consecutive blocks. Therefore, the claim will follow if we can show
that the expected number of magic subwords in
$\tilde{W}_{1}\tilde{W}_{2}\dotsb \tilde{W}_{n/m}$ remains bounded as
$T \rightarrow \infty$. Denote the number of such magic subwords by
$N''_{A,B}$.

The number of letters in the concatenation
$\tilde{W}_{1}\tilde{W}_{2}\dotsb \tilde{W}_{n/m}$ is
$\sum_{i=1}^{n}\tau_{i}$, which by Claim~\ref{claim:no-long-blocks}
obeys the large deviation bound \eqref{eq:LD-1}. Fix $K<\infty$, and let $G$ be the event
that $\sum_{i=1}^{n}\tau_{i} \leq nK$. On this event, $N''_{A,B}$ is
bounded by the number of magic subwords in the first $nK$ letters of
the concatenation $W_{1}W_{2}\dotsb$. Since the concatenation
$W_{1}W_{2}\dotsb $ is, by Proposition~\ref{proposition:regeneration},
a version of the Gibbs process associated with the Gibbs state
$\lambda$, which by shift-invariance is stationary, it follows that
the expected number of magic subwords in the first $nK$ letters is 
$nK \times $ the probability that a magic subword begins at the very
first letter of $W_{1}W_{2}\dotsb $. But by
Proposition~\ref{proposition:measure-asymptotics}, this probability is
asymptotic to $ T^{-1}\alpha \beta_{A,B}E_{\lambda} F$; thus, for
large $T$,
\begin{equation*}
	EN''_{A,B}\mathbf{1}_{G}\leq nKT^{-1}\alpha
	\beta_{A,B}E_{\lambda}F =K\alpha \beta_{A,B}. 
\end{equation*}

It remains to bound the contribution to the expectation from the
complementary event $G^{c}$. For this, we use the large deviation
bound \eqref{eq:LD-1}. On the event that $\sum_{i=1}^{n}\tau_{i} \leq
n (K+k)$, the count $N''_{A,B}$ cannot be more than $n (K+k)$; hence,
\[
	EN''_{A,B}\mathbf{1}_{G^{c}}\leq \sum_{k=1}^{\infty} n (K+k)
	e^{-n\Lambda (K+k)} .
\]
Since $\Lambda (C)$ grows at least linearly in $C$, this sum remains
bounded provided $K$ is sufficiently large.
\end{proof}

Recall that $N'_{A,B}$ is the number of magic subwords in the first
$n$ letters of the sequence $\tilde{W}_{1}\tilde{W}_{2}\dotsb$
obtained by concatenating the words in the regenerative
representation. The blocks $\tilde{W}_{k}$ are independent, and except
for the first all have  the same distribution, with common mean length
$mE\tau_{2}$. Let $N^{*}_{A,B}$ be the number of magic subwords in the
segment $\tilde{W}_{2}\tilde{W_{3}}\dotsb \tilde{W_{\nu}}$, where 
$\nu =\nu (n)=n/[m E\tau_{2}]$. By the central limit theorem, with probability
approaching $1$  the
length $\sum_{i=1}^{\nu m}\tau_{i}$ of the segment
$\tilde{W}_{2}\tilde{W_{3}}\dotsb \tilde{W_{\nu}}$ differs by no more
than $\sqrt{n}\log n$ from $n$, and by the same argument as in the
proof of Claim~\ref{claim:no-straddles}, the probability that a magic
subword occur within the stretch of $2\sqrt{n}\log n$ letters
surrounding the $n$th letter converges to $0$. Thus, as $T \rightarrow
\infty$,
\[
	P\{N'_{A,B}\not =N^{*}_{A,B} \} \longrightarrow 0.
\]
By Claim~\ref{claim:no-long-blocks}
and Corollary~\ref{corollary:quickReturns}, with probability
approaching $1$ no block $\tilde{W}_{k}$ will contain more than $1$
magic subword, and by Claim~\ref{claim:no-straddles} no magic subword
will straddle two blocks $\tilde{W}_{k},\tilde{W}_{k+1}$. Therefore,
with probability $\rightarrow 1$,
\[
	N'_{A,B}=N^{*}_{A,B}=\sum_{k=2}^{\nu} Y(\tilde{W}_{k}),
\]
where $Y(\tilde{W}_{k})$ is the indicator of the event that the block
$\tilde{W}_{k}$ contains a magic subword. These indicators are
independent, identically distributed Bernoulli random variables; by
Proposition~\ref{proposition:magicWordMeasure},
\[
	EY(\tilde{W}_{k}) mE\tau_{2} \sim T^{-1} \frac{1}{2}  \kappa \beta E_{\lambda}F
\]
and so
\[
	E\sum_{k=2}^{\nu} Y(\tilde{W}_{k}) \longrightarrow \frac{1}{2}
	\kappa
	\beta_{A,B}. 
\]
Now Proposition~\ref{proposition:lsn} implies that for any integer
$J\geq 0$,
\[
	P\left\{\sum_{k=2}^{\nu} Y(\tilde{W}_{k})=J \right\}
	\longrightarrow  \frac{( \kappa
	\beta_{A,B}/2)^{J}}{J!}e^{- \kappa \beta_{A,B}/2}, 
\]
proving \eqref{eq:obj-r1}.
\end{proof}

\begin{proof}
[Proof of \eqref{eq:reformulation} for $r\geq 1$]
(Sketch) In general, given $r \geq 1$, we are given a set $\{A_{i},B_{i}\}$ of pairwise
non-overlapping boundary arcs of $\partial B (0,\alpha)$; we must show
that the counts $N_{A_{i},B_{i}}$ converge \emph{jointly} to
independent Poissons with means $\alpha \beta_{A_{i}B_{i}}$,
respectively. The key to this is that the sets $\mathcal{M}_{i}$ of
magic words for the different pairs  $(A_{i},B_{i})$ are pairwise
disjoint, because the arcs $A_{i},B_{i}$ are non-overlapping (a
geodesic segment crossing of $D (x;\alpha T^{-1})$ has unique entrance
and exit points on $\partial D (x,\alpha T^{-1})$, so at most one of
the pairs $(A_{i},B_{i})$ can contain these).

By the same argument as in the case $r=1$, the counts 
$N_{A_{i},B_{i}}$  can be replaced by the sums 
\[
	N^{*}_{A_{i},B_{i}}=\sum_{k=2}^{\nu} Y_{i} (\tilde{W}_{k})
\]
where $Y_{i}(\tilde{W}_{k})$ is the indicator of the event that the block
$\tilde{W}_{k}$ contains a magic subword for the pair
$A_{i},B_{i}$. Since the sets $\mathcal{M}_{i}$ of magic subwords are
non-overlapping,  the vector of these sums follows a multinomial
distribution; hence, by Proposition~\ref{proposition:mult-lsn}, the vector 
\[
	(N^{*}_{A_{i},B_{i}})_{1\leq i\leq r}
\]
converges in distribution to the product of $r$ Poisson distributions,
with means $\frac{1}{2} \kappa\alpha \beta_{A_{i}B_{i}}$.
\end{proof}

\begin{proof}
[Proof of Proposition \ref{proposition:reformulation-pairs}]
The argument is virtually the same as that for the case $r\geq 2$ of
Proposition~\ref{proposition:reformulation}; the only new wrinkle is
that the sets $\mathcal{M}_{i}$ and $\mathcal{M}'_{i'}$ of magic words
for the pairs $(A_{i},B_{i})$ and $(A'_{i'},B'_{i'})$ need not be
disjoint, because it is possible for a geodesic segment across the
fundamental polygon $\mathcal{P}$ to enter both $D (x,\alpha T^{-1})$
and $D (x';\alpha T^{-1})$. However,
Proposition~\ref{proposition:double-hits} implies that the expected
number of such double-hits in the first $n$ crossings of $\mathcal{P}$
converges to $0$ as $T \rightarrow \infty$, and consequently the
probability that there is even one double-hit tends to zero. Thus, the
magic subwords for pairs $A'_{i'},B'_{i'}$ that also occur as magic
subwords for pairs $A_{i},B_{i}$ can be deleted without affecting  the
counts (at least with probability $\rightarrow 1$ as $T \rightarrow
\infty$), and so the counts $N_{A_{i},B_{i}}$ and $N'_{A'_{i'},B'_{i'}}$ may
be replaced by 
\begin{align*}
		N^{*}_{A_{i},B_{i}}&=\sum_{k=2}^{\nu} Y_{i}
		(\tilde{W}_{k}) \quad \text{and}\\
		N^{**}_{A'_{i'},B_{i'}}&=\sum_{k=2}^{\nu} Y'_{i'} (\tilde{W}_{k})
\end{align*}
where $Y_{i}(\tilde{W}_{k})$ and $Y'_{i'}$ are the the indicators of
the events that the block $\tilde{W}_{k}$ contains a magic subword for
the appropriate pair (with deletions of any duplicates). Since the
adjusted sets of magic subwords are non-overlapping, the vector of
these counts $N^{*}_{A_{i},B_{i}}$ and $N^{**}_{A'_{i'},B_{i'}}$
follows a multinomial distribution, and so the convergence
\eqref{eq:reformulation-pairs} holds, by
Proposition~\ref{proposition:mult-lsn}.
\end{proof}

\section{Global Statistics}\label{sec:global-stats}

In this section we show how Theorem~\ref{theorem:global-stats}, which
describes the ``global'' statistics of the tessellation
$\mathcal{T}_{T}$ induced by a random geodesic segment of length $T$,
follows from the ``local'' description provided by
Theorem~\ref{theorem:random-local} and the ergodicity of the Poisson
line process with respect to translations.
Theorem~\ref{theorem:random-local} and
Proposition~\ref{proposition:ergodicity-plp} (cf. also
Corollary~\ref{corollary:poly-freqs}) imply that locally -- in balls
$D (x;\alpha T^{-1})$, where $\alpha$ is large -- the empirical
distributions of polygons, their angles and side lengths (after
scaling by $T$) stabilize as $T \rightarrow \infty$. Since this is
true in neighborhoods of all points $x\in S$, it is natural to expect
that these empirical distributions also converge globally. To prove
this, we must show that in those small regions of $S$ where empirical
distributions behave atypically the counts are not so large as to
disturb the global averages. The key is the following proposition,
which limits the numbers of polygons, edges, and vertices in
$\mathcal{T}_{T}$. 

\begin{proposition}\label{proposition:siCounts}
Let $f=f_{T}, v=v_{T}$, and $e=e_{T}$ be the number of polygons,
vertices and edges in the tessellation $\mathcal{T}_{T}$. With
probability one, as $T \rightarrow  \infty$,
\begin{align}\label{eq:vertices}
	\lim_{T \rightarrow \infty} v_{T}/T^{2}&=\kappa/\pi ,\\
\label{eq:edges}
	\lim_{T \rightarrow \infty} e_{T}/T^{2}&= (2\kappa)/\pi, \quad \text{and} \\
\label{eq:faces}
	\lim_{T \rightarrow \infty} f_{T}/T^{2}&=\kappa/\pi.
\end{align}
Moreover, there exists a (nonrandom) constant $C=C_{S}<\infty$ such
that for \emph{every} tessellation $\mathcal{T}_{T}$ induced by a
geodesic segment of length $T$,
\begin{equation}\label{eq:v+e+f-bound}
	v_{T}+e_{T}+f_{T}\leq CT^{2}.
\end{equation}
\end{proposition}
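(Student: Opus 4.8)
The plan is to reduce all three limits \eqref{eq:vertices}--\eqref{eq:faces} to the single assertion $v_T/T^2\to\kappa_g/\pi$ almost surely, and to obtain \eqref{eq:v+e+f-bound} from a crude deterministic estimate on $v_T$. With $\lambda$-probability one the random geodesic $\gamma$ has no triple self-intersections and no self-tangencies, since each of these is a property of the full geodesic line excluded by a countable union of positive-codimension conditions on the initial vector; hence a.s. every interior vertex of $\mathcal T_T$ is an ordinary transverse double point of degree $4$, so $2e_T=4v_T+O(1)$ (the $O(1)$ absorbing the two endpoints of the segment) and, once $T$ is large enough that the maximal face diameter is small (so every face is a convex geodesic polygon, in particular a disk), Euler's relation $v_T-e_T+f_T=2-2g$ gives $f_T=v_T+O(1)$. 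Thus \eqref{eq:vertices}--\eqref{eq:faces} all follow from $v_T/T^2\to\kappa_g/\pi$, and for \eqref{eq:v+e+f-bound} it suffices---reading the same relations as inequalities $e_T\le 2v_T+O(1)$ and $f_T\le 2e_T$ (each face borders at least one edge and each edge at most two faces)---to bound $v_T$ deterministically.

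For the deterministic bound I would pass to the universal cover. Let $P_T$ be the number of transverse coincidence pairs $0\le s<t\le T$ with $\gamma(s)=\gamma(t)$; then $v_T\le P_T$, with equality a.s. Lifting $\gamma$ to a geodesic segment $\tilde\gamma\subset\zz D$, each such pair corresponds to a transverse crossing of $\tilde\gamma$ with $g\tilde\gamma$ for a unique nontrivial deck transformation $g$, and two distinct geodesic lines of $\zz D$ cross at most once, so $P_T\le\#\{g\ne e:\tilde\gamma\cap g\tilde\gamma\ne\emptyset\}$. Subdividing $\tilde\gamma$ into $\lceil T\rceil$ unit sub-arcs $\tilde\gamma_i$, a nonempty intersection $\tilde\gamma\cap g\tilde\gamma$ forces $\tilde\gamma_i\cap g\tilde\gamma_j\ne\emptyset$ for some $i,j$, which forces $g$ to carry the (fixed) midpoint of $\tilde\gamma_j$ into a ball of bounded radius about the midpoint of $\tilde\gamma_i$; since $S$ has positive injectivity radius the $\pi_1(S)$-orbit of a point is uniformly separated, so only a constant number $C_0=C_0(S)$ of group elements can do this. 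Summing over the $\le(T+1)^2$ pairs $(i,j)$ gives $P_T\le C_0(T+1)^2$; together with $e_T\le 2P_T+O(1)$ and $f_T\le 2e_T$ this yields $v_T+e_T+f_T=O(T^2)$ for every tessellation (for $T\ge1$; the smaller range is trivial after adjusting $C$).

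For the almost sure limit, the first-moment statement is $E_\lambda[v_T]=E_\lambda[P_T]\sim\kappa_g T^2/\pi$: writing $E_\lambda[P_T]=\tfrac12\iint_{[0,T]^2}\rho(s,t)\,ds\,dt$, where $\rho(s,t)\,ds\,dt$ is the expected number of coincidence pairs with coordinates in $[s,s+ds]\times[t,t+dt]$, mixing of the geodesic flow gives $\rho(s,t)\to 2/(\pi\,\text{area}(S))$ as $|t-s|\to\infty$ (the Santal\'o crossing constant, the geodesic analogue of the intensity in Corollary~\ref{corollary:line-intensity}), while the near-diagonal part contributes $O(T)$; the resulting constant $T^2/(\pi\,\text{area}(S))=\kappa_g T^2/\pi$ is also exactly what Corollary~\ref{corollary:ppIntensity} predicts for a rate-$\kappa_g$ Poisson line process on a region of area $\text{area}(S)$. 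To upgrade the mean to an a.s. limit I would write $v_T=\sum_{i\le N}Z_i$, where $Z_i$ is the number of self-intersections lying in the $i$-th cell of a decomposition of $S$ into $N$ cells ($N$ of order $T^2$) of diameter of order $\alpha/T$ (e.g. Voronoi cells of a maximal separated net). Each $Z_i$ has mean $O(1)$ and, by the control on returns of $\gamma$ to small balls provided by Proposition~\ref{proposition:quickReturns}, variance bounded uniformly in $i$ and $T$; and by Theorem~\ref{theorem:random-local-pairs} and its extension to finitely many points, $\text{Cov}(Z_i,Z_j)\to0$ for cells at distance bounded below. Organizing the cell pairs by distance yields $\text{Var}(v_T)=o(T^4)$, and feeding in the exponential mixing of the geodesic flow \cite{ratner:mixing},\cite{dolgopyat}---or, equivalently, the exponential cluster bounds and independent-excursion structure of Proposition~\ref{proposition:regeneration}---improves this to $\text{Var}(v_T)=O(T^3)$. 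Chebyshev and Borel--Cantelli along a geometric subsequence $T_k$, together with the monotonicity of $T\mapsto v_T$, then give $v_T/T^2\to\kappa_g/\pi$ a.s., hence \eqref{eq:vertices}--\eqref{eq:faces}.

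The main obstacle is the variance estimate in the third step. Because $v_T$ is a pair count along the geodesic orbit, Birkhoff's theorem---which suffices for the first-moment computation, and, via pairwise asymptotic independence of the cells, for convergence in probability---does not by itself deliver a second-moment bound strong enough for Borel--Cantelli. Pushing $\text{Var}(v_T)$ below $T^4/\log T$, let alone to the expected $O(T^3)$, genuinely requires quantitative mixing: either of the geodesic flow directly, or, in keeping with the symbolic-dynamical machinery already developed, of the Gibbs state $\lambda$ via the exponential cluster property and the regenerative representation.
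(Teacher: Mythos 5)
Your reductions are sound and largely coincide with the paper's: almost-sure absence of triple points (the paper's Lemma~\ref{lemma:noTriplePoints}, proved via a positive-codimension argument of Epstein) gives $e_{T}=2v_{T}+O(1)$, Euler's formula on the closed surface then gives $f_{T}=v_{T}+O(1)$, and your universal-cover count of deck transformations gives a valid deterministic bound $v_{T}=O(T^{2})$ (the paper gets this more cheaply: two geodesic arcs of length at most the injectivity radius $\varrho$ meet at most once, so cutting $\gamma_{T}$ into length-$\varrho$ pieces yields $v_{T}\leq T^{2}/\varrho^{2}$). So \eqref{eq:v+e+f-bound} and the passage from \eqref{eq:vertices} to \eqref{eq:edges}--\eqref{eq:faces} are in order.

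The genuine gap is the almost-sure limit \eqref{eq:vertices} itself, which is the heart of the proposition and which your proposal does not establish: you reduce it to a variance bound $\mathrm{Var}(v_{T})=o(T^{4})$ (or $O(T^{3})$) over cells of diameter of order $\alpha/T$, and you concede at the end that this bound is not proved. The tools you invoke cannot supply it: Theorem~\ref{theorem:random-local-pairs} is a purely qualitative statement (joint convergence in law of the configurations at \emph{two fixed points}), so it gives no rate and cannot be summed over the $\sim T^{4}$ pairs of shrinking cells, and Proposition~\ref{proposition:quickReturns} controls quick re-entries, not second moments of the local intersection counts. The paper's proof shows that no second-moment or quantitative-mixing input is needed, because it never works at the shrinking scale $\alpha/T$: fix $\epsilon>0$ (less than the injectivity radius) \emph{independent of} $T$, cut $\gamma_{T}$ into length-$\epsilon$ arcs, and write $v_{T}$ as the number of crossing pairs \eqref{eq:vertex-sum}. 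At this fixed scale the crossing indicator is a fixed function on the unit tangent bundle, so Birkhoff's ergodic theorem alone gives, almost surely, that the fraction of arcs crossing a given one converges to $L(R_{\epsilon})$, hence $v_{T}/T^{2}$ converges a.s.\ for each fixed $\epsilon$; only afterwards does one let $\epsilon\to 0$, to identify the constant via the rhombus computation $L(R_{\epsilon})\sim 2\epsilon^{2}\kappa_{g}/\pi$, giving $\kappa_{g}/\pi$. This fixed-$\epsilon$ double Birkhoff argument is the one in \cite{lalley:duke}, Section~2.3, which the paper cites; replacing your cell decomposition by it closes the gap, whereas your route as written leaves the main assertion open.
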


For the proof we will need to know that multiple intersection points
(points of $S$ that a geodesic ray passes through more than twice) do
not occur in typical geodesics. We have the following:

\begin{lemma}\label{lemma:noTriplePoints}
For almost every unit tangent vector $v\in T^1S$, there are no
multiple intersection points on the geodesic ray $(\gamma_{t} (v))_{t\geq 0}$.
\end{lemma}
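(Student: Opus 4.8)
The plan is to lift to the universal cover $\zz{D}$ and show that a point of $S$ through which the geodesic passes three times (with three distinct tangent directions) forces a codimension-one collinearity condition on translates of a lift of that point; the lemma then follows from invariance of Liouville measure under the flow. Call $p\in S$ a \emph{multiple point} of a geodesic ray if the ray passes through $p$ at least three times with distinct tangent directions, and let $B\subset T^1S$ be the set of $v$ for which the ray $(\gamma_t v)_{t\ge0}$ has a multiple point. (The set of $v$ lying on a closed geodesic is a countable union of periodic circles, hence $L$-null and flow-invariant; after discarding it the literal requirement ``passes through $p$ more than twice'' coincides with the one above, since a non-periodic geodesic that revisits a point does so with a new direction.) Let $R\subset T^1S$ be the set of $v$ such that $\pi(v)$ is a multiple point of the complete geodesic through $v$. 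If the ray from $v$ reaches a multiple point then, at the time $t\ge0$ of the first such visit, $\gamma_t v\in R$; hence $B\subset\bigcup_{t\ge0}\gamma_{-t}R=\bigcup_{n\ge0}\gamma_{-n}\bigl(\bigcup_{0\le s\le 1}\gamma_{-s}R\bigr)$. Since the geodesic flow preserves $L$, it suffices to show $\bigcup_{0\le s\le1}\gamma_{-s}R$ is $L$-null, and for that it is enough that $R$ be contained in a countable union of $C^1$ submanifolds of $T^1S$ of dimension $\le 1$: then $\bigcup_{0\le s\le1}\gamma_{-s}R$, a $C^1$ image of such a set times $[0,1]$, lies in a countable union of sets of dimension $\le 2<3=\dim T^1S$.

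So everything reduces to showing $R$ is ``one-dimensional''. Fix a compact fundamental polygon $\mathcal{P}\subset\zz{D}$ for $S=\zz{D}/\pi_1(S)$ (cf.\ Section~\ref{ssec:symbDynGF}). Given $v\in R$, lift $\pi(v)$ to $\tilde x\in\mathcal{P}$, and lift the complete geodesic through $v$ to the geodesic line $\ell\subset\zz{D}$ passing through $\tilde x$. The three visits to $\pi(v)$ occur along $\ell$ at three distinct points (geodesic lines in $\zz{D}$ are embedded), which, being lifts of $\pi(v)$, can be written $\tilde x$, $a\tilde x$, $b\tilde x$ with $a,b\in\pi_1(S)$ distinct and nontrivial. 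Thus $\tilde x$, $a\tilde x$, $b\tilde x$ are collinear and $\ell$ is the geodesic on which they lie; in particular the direction $\theta$ of $\ell$ at $\tilde x$ equals $\pm$ the direction from $\tilde x$ to $a\tilde x$. For distinct nontrivial $a,b\in\pi_1(S)$ set $C_{a,b}:=\{\tilde x\in\zz{D}:\tilde x,\,a\tilde x,\,b\tilde x \text{ lie on a common geodesic}\}$. Then the part of $R$ lying over $\mathcal{P}$ is contained in the countable union, over pairs $(a,b)$, of the sets $\{(\tilde x,\theta):\tilde x\in\mathcal{P}\cap C_{a,b},\ \theta=\pm(\text{direction }\tilde x\to a\tilde x)\}$, each of which is a $C^1$ image of $\mathcal{P}\cap C_{a,b}$. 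Projecting back to $T^1S$, it suffices to show every $C_{a,b}$ has dimension $\le 1$ in $\zz{D}$.

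Collinearity of $\tilde x,a\tilde x,b\tilde x$ is the vanishing of a real-analytic function of $\tilde x$ (the three points depend real-analytically, through M\"{o}bius transformations, on $\tilde x$). Since $\zz{D}$ is connected, $C_{a,b}$ is either all of $\zz{D}$ or a proper real-analytic subset; the latter stratifies into countably many real-analytic submanifolds of dimension $\le 1$. To rule out $C_{a,b}=\zz{D}$ it is enough to exhibit one non-collinear triple $\tilde x,a\tilde x,b\tilde x$: when $a$ and $b$ have distinct axes, a point $\tilde x$ lying off the axes of both $a$ and $b$ is non-collinear with its translates; when $a=g^m$, $b=g^n$ are powers of a common primitive $g$ (with $m\ne n$ and $m,n\ne 0$), a point $\tilde x$ off the axis of $g$ works, since then $\tilde x, g^m\tilde x, g^n\tilde x$ are three distinct points of a hypercycle about that axis and a geodesic meets a hypercycle in at most two points. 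Hence $\dim C_{a,b}\le 1$, which by the previous paragraph shows $R$ is contained in a countable union of $\le 1$-dimensional $C^1$ submanifolds of $T^1S$, and the reduction of the first paragraph finishes the proof.

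I expect the main obstacle to be this non-degeneracy step --- proving that the collinearity condition defining $C_{a,b}$ is not vacuous, uniformly over all pairs of distinct nontrivial $a,b$ --- together with the bookkeeping needed to line up the various lifts and basepoints cleanly (and to handle the degenerate pairs, such as when $a$ and $b$ share an axis, where $C_{a,b}$ collapses to that single geodesic). The remaining ingredients --- the Liouville-invariance argument applied to the flow-saturation of $R$, and the countable union over $\pi_1(S)^2$ --- are routine.
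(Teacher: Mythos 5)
Your overall strategy is sound and essentially parallels the paper's own proof, which likewise lifts to the universal cover, fixes a pair of deck transformations $A,B$, invokes (via a citation to Epstein) the fact that the set of geodesics giving a triple intersection for that fixed pair is a positive-codimension subvariety, and then takes a countable union; your real-analytic collinearity locus $C_{a,b}$, the flow-invariance reduction, and the hypercycle argument in the common-axis case are a legitimate self-contained substitute for that citation. The one genuine flaw is the non-degeneracy claim in the distinct-axes case: it is \emph{not} true that every $\tilde x$ off the axes of $a$ and $b$ has $\tilde x, a\tilde x, b\tilde x$ non-collinear. Such collinear triples with basepoint off both axes are precisely the configurations underlying a triple self-intersection of a geodesic other than the closed geodesics of $a$ or $b$; the lemma says these are atypical, not nonexistent, and in general $C_{a,b}$ is a nonempty curve not contained in the two axes. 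Concretely, in the upper half-plane take $a(z)=2z$, $x=1+i$, and $b(z)=\sqrt{5}\,z+3-\sqrt{5}$: then $a,b$ are hyperbolic with distinct axes (the imaginary axis and the vertical geodesic through $(1-\sqrt{5})/2$), $x$ lies on neither axis, yet $x$, $ax=2+2i$, and $bx=3+\sqrt{5}\,i$ all lie on the geodesic $\{|z-3|=\sqrt{5}\}$. Nothing in your claim uses discreteness of the group, and discreteness would not rescue it; as written, the justification that $C_{a,b}\neq \zz{D}$ in the distinct-axes case is unsupported.

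Fortunately the gap closes easily, and with a point \emph{on} an axis rather than off it: if $C_{a,b}=\zz{D}$, then every $\tilde x$ on the axis $A_a$ of $a$ lies in $C_{a,b}$; since $\tilde x$ and $a\tilde x$ both lie on $A_a$, the geodesic through them is $A_a$ itself, so collinearity forces $b\tilde x\in A_a$ for all $\tilde x\in A_a$, i.e.\ $b(A_a)=A_a$. A nontrivial deck transformation of a closed hyperbolic surface preserving a geodesic is hyperbolic with that geodesic as its axis, so $A_b=A_a$, contradicting the distinct-axes hypothesis. With this substitution (keeping your hypercycle argument when $a,b$ are powers of a common primitive, which exhausts the shared-axis case in a torsion-free cocompact Fuchsian group), every $C_{a,b}$ is a proper real-analytic subset of $\zz{D}$, hence of dimension at most $1$, and the rest of your reduction goes through.
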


\begin{proof} Suppose $v \in T^1S$ gives rise to  triple intersection. Let $\gamma$ denote a lift of the geodesic ray $(\gamma_{t} (v))_{t\geq 0}$ to the universal cover $\mathbb H^2$, we have that there must be deck transformations $A, B$ so that the geodesic rays $A\gamma$ and $B\gamma$ have a triple intersection.  In~\cite{Epstein}, it is shown that the set of such geodesics is a positive codimension subvariety for any fixed $A, B$, and therefore, a set of measure $0$. Taking the (countable) union over all possible pairs $A, B$, we have our result.
\end{proof}

\begin{proof}
[Proof of Proposition~\ref{proposition:siCounts}]
The number $v_{T}$ of vertices is the number of self-intersections of
the random geodesic segment $\gamma_{T}:= (\gamma_{t} (\cdot))_{0\leq
t\leq T}$ (unless one counts the beginning and end points of
$\gamma_{T}$ as vertices, in which case the count is increased by
$2$). It is an easy consequence of Birkhoff's ergodic theorem (see
\cite{lalley:duke}, sec. 2.3 for the argument, but beware that
\cite{lalley:duke} seems to be off by a factor of $4$ in his
calculation of the limit) that the number of self-intersections
satisfies \eqref{eq:vertices}. Following is a brief resume of the
argument.

Fix $\epsilon >0$ small, and partition the segment $\gamma_{T}$ into
non-overlapping geodesic segments $\gamma^{i}_{T}$ of length
$\epsilon$ (if necessary, extend or delete the last segment; this will
not change the self-intersection count by more than $O (T)$). If
$\epsilon$ is smaller than the injectivity radius then 
\begin{equation}\label{eq:vertex-sum}
	v_{T}=\sum \sum_{i\not =j} \mathbf{1} (\gamma^{i}_{T}\cap
	\gamma^{j}_{T}\not =\emptyset) 
\end{equation}
is the number of pairs $(i,j)$ such that $\gamma^{i}_{T}$ and
$\gamma^{j}_{T}$ cross. Birkhoff's theorem implies that for each $i$,
the fraction of indices $j$ such that $\gamma^{j}_{T}$ crosses
$\gamma^{i}_{T}$ converges, as $T \rightarrow \infty$, to the
normalized Liouville measure of that region $R_{\epsilon}$ of $T^1S$
where the geodesic flow will produce a ray that crosses
$\gamma^{i}_{T}$ by time $\epsilon$. This implies that the limit on
the left side of \eqref{eq:vertices} exists. To calculate the limit,
let $\epsilon \rightarrow 0$: if $\epsilon >0$ is small, then for each
angle $\theta$ the set of points $x\in S$ such that $(x,\theta)\in
R_{\epsilon}$ is approximately a rhombus of side $\epsilon$ with
interior angle $\theta$. Integrating the area of this rhombus over
$\theta$, one obtains a sharp asymptotic approximation to the normalized
Liouville measure of $R_{\epsilon}$:
\[
	L (R_{\epsilon})\sim 2\epsilon^{2}\int_{0}^{\pi} \sin \theta
	\,d\theta / (2\pi \,\text{area} (S))=2\epsilon^{2} \kappa/\pi.
\]
Since the number of terms in the sum \eqref{eq:vertex-sum} is
$\frac{1}{2} [T/\epsilon^{2}]$, it follows that
$v_{T}/T^{2}\rightarrow \kappa/\pi$.

The limiting relations \eqref{eq:edges} and \eqref{eq:faces} follow
easily from \eqref{eq:vertices}.  With probability one, the geodesic
segment $\gamma_{T}$ has no multiple intersection points, by
Lemma~\ref{lemma:noTriplePoints}. Consequently, as one traverses the
segment $\gamma_{T}$ from beginning to end, one visits each vertex
twice, and immediately following each such visit encounters a new edge
of $\mathcal{T}_{T}$ (except for the initial edge), so
$e_{T}=2v_{T}\pm 2$, and hence \eqref{eq:edges} follows from
\eqref{eq:vertices}. Finally, by Euler's formula, $v-e+f=-\chi (S)$,
and therefore \eqref{eq:faces} follows from
\eqref{eq:vertices}--\eqref{eq:edges}.

No geodesic ray can intersect itself before time $\varrho $, where
$\varrho $ is the injectivity radius of $S$, so for every geodesic
segment $\gamma_{T}$ to length $T$ the corresponding tessellation must
satisfy $v_{T}\leq T^{2}/ \varrho^{2}$. The inequality
\eqref{eq:v+e+f-bound} now follows by Euler's formula and the relation
$e=v\pm 2$.
\end{proof}

\begin{proof}
[Proof of Theorem \ref{theorem:global-stats}] 
We will prove only the assertion concerning the empirical frequencies
of $k-$gons in the induced tessellation. Similar arguments can be used
to prove that the empirical distributions of scaled side-lengths,
interior angles, etc. converge to the corresponding theoretical
frequencies in a Poisson line process. Denote by $\mathcal{T}_{T}$ the
tessellation of the surface $S$ induced by a random geodesic segment
of length $T$.

We first give a heuristic argument that explains how
Theorem~\ref{theorem:random-local},
Corollary~\ref{corollary:poly-freqs}, and
Proposition~\ref{proposition:siCounts} together imply the convergence
of empirical frequencies. Suppose that, for large $T$, the surface $S$
could be partitioned into non-overlapping regions $R_{i}$ each nearly
isometric, by the scaled exponential mapping from the tangent space
based at its center $x_{i}$, to a square of side $\alpha T^{-1}$. (Of
course this is not possible, because it would violate the fact that
$S$ has non-zero scalar curvature.) The hyperbolic area of $R_{i}$ would
be $\sim \alpha^{2}/T^{2}$, and so the number of squares $R_{i}$ in
the partition would be $\sim T^{2}/ (\alpha^{2}\kappa)$.

Assume that $\alpha$ is sufficiently
large that with probability at least $1-\epsilon$, the absolute errors
in the limiting relations \eqref{eq:poly-freqs},
\eqref{eq:vertex-freqs}, and \eqref{eq:kgonFracs} (for some fixed $k$)
of Corollary~\ref{corollary:poly-freqs} are less than $\epsilon$.  By
Theorem \ref{theorem:random-local}, for any point $x\in S$ and any
$\alpha$, the restriction of the geodesic tessellation
$\mathcal{T}_{T}$ to the disk $D (x,2\alpha T^{-1})$, when pulled back
to the ball $B (0,2\alpha)$ of the tangent space $T_x S$, converges
in distribution, as a line process, to the Poisson line process of
intensity $\kappa$. Since this holds for every $x$, it follows
that for all sufficiently large $T$, with
probability at least $1-2\epsilon$, in all but a fraction $\epsilon$
of the regions $R_{i} $ the counts $V_{T} (R_{i} )$ and
$F_{T} (R_{i} )$ of vertices and faces in the regions $R_{i}
$ (in the tessellation $\mathcal{T}_{T}$) and the fractions
$\Phi_{k,T} (R_{i} )$ of $k-$gons will satisfy
\begin{align}\label{eq:V}
 		|V_{T}(R_{i} )/\alpha^{2} - \kappa^{2}/\pi |&<2\epsilon ,\\
\label{eq:F}
		|F_{T}(R_{i} )/\alpha^{2} - \kappa^{2}/\pi
		|&<2\epsilon , \quad \text{and}\\
\label{eq:Phi}
		|\Phi_{k,T} (R_{i} ) -\phi_{k}|&<2\epsilon .
\end{align}
Call the regions $R_{i} $ where these inequalities hold \emph{good},
and the others \emph{bad}. Since all but and area of size $\epsilon
\times \text{area} (S)$ is covered by good squares $R_{i}$, relations
\eqref{eq:F} and \eqref{eq:faces}  imply that the total number of
faces of $\mathcal{T}_{T}$ in the bad squares satisfies
\[
	\sum_{i \;\text{bad}} F_{T} (R_{i}) \leq 4\epsilon T^{2}
	\times \text{area} (S). 
\]
Consequently, regardless of how skewed the empirical distribution of
faces in the bad regions might be, it cannot affect the overall
fraction of $k-$gons by more than $8\epsilon $. Since $\epsilon >0$
can be made arbitrarily small, it follows from \eqref{eq:Phi} that 
\begin{equation}\label{eq:Phi-lim}
	\lim_{T \rightarrow \infty} \Phi_{k,T} (S)= \phi_{k}.
\end{equation}

To provide a rigorous argument, we must explain how the partition into
``squares'' $R_{i}$ can be modified.  Fix $\delta >0$ small, and let
$\Delta$ be a triangulation of $S$ whose triangles $\tau$ all (a) have
diameters less than $\delta$ and (b) have geodesic edges. If $\delta
>0$ is sufficiently small, the triangles of $\Delta$ will all be
contained in coordinate patches nearly isometric, by the exponential
mapping, to disks $B (0,2\delta)$ in the tangent space $TS_{x_{\tau}}$, where
$x_{\tau}$ is a distinguished point in the interior of $\tau$. In each
such ball $B (0,2\delta)$, use an orthogonal coordinate system to
foliate $B (0,2\delta)$ by lines parallel to the coordinate axes, and
then use the exponential mapping to project these foliations to
foliations of the triangles $\tau$; call these foliations
$\mathcal{F}_{x} (\tau)$ and $\mathcal{F}_{y} (\tau)$. If $\delta >0$
is sufficiently small then the curves in $\mathcal{F}_{x} (\tau)$ will
cross curves in $\mathcal{F}_{y} (\tau)$ at angles $\theta \in
[\frac{\pi}{2}-\epsilon ,\frac{\pi}{2}+\epsilon]$, 
 where $\epsilon >0$ is small.

The foliations $\mathcal{F}_{x} (\tau)$ and $\mathcal{F}_{y} (\tau)$
can now be used as guidelines to partition $\tau$ into regions $R_{i}
(\tau)$ whose boundaries are segments of curves in one or the other of
the foliations. In particular, each boundary $\partial R_{i} (\tau)$
should consist of four segments, two from $\mathcal{F}_{x} (\tau)$ and
two from  $\mathcal{F}_{y} (\tau)$, and each should be of length $\sim
\alpha T^{-1}$; thus, for large $T$ each region $R_{i} (\tau)$ will be
nearly a ``parallelogram'' (more precisely, the image of a
parallelogram in the tangent space $TS_{x_{i} (\tau)}$ at a central
point $x_{i} (\tau)\in R_{i} (\tau )$) whose interior angles are within
$\epsilon$ of $\pi /2$. The collection of all regions $R_{i} (\tau )$,
where $\tau$ ranges over the triangulation $\Delta$, is nearly a
partition of $S$ into rhombi; only at distances $O (\alpha
T^{-1})$ of the boundaries $\partial \tau$ are there overlaps. The
total area in these boundary neighborhoods is $o (1)$ as $T
\rightarrow \infty$.

Corollary~\ref{corollary:poly-freqs}, as stated, applies only to
squares. However, any rhombus $R$ whose interior angles are within
$\epsilon$ of $\pi /2$ can be bracketed by squares $S_{-}\subset
R\subset S_{+}$ in such a way that the area of $S_{+}\setminus S_{-}$
is at most $C\epsilon \,\text{area} (S_{+})$, for some $C<\infty$ not
depending on $\epsilon$. Since Corollary~\ref{corollary:poly-freqs}
applies for each of the bracketing squares, it now follows as in the
heuristic argument above that with probability $\geq 1-C'\epsilon$, in
all but a fraction $C\epsilon$ of the regions $R_{i} (\tau)$ the
inequalities \eqref{eq:V}, \eqref{eq:F},and \eqref{eq:Phi} will hold.
The limiting relation \eqref{eq:Phi-lim} now follows as before.
\end{proof}

\section{Extensions, Generalizations, and Speculations}\label{sec:ext}

\medskip

\textbf{A. Finite-area hyperbolic surfaces with cusps.}  We expect
also that Theorems
\ref{theorem:random-local}--\ref{theorem:global-stats} extend to
finite-area hyperbolic surfaces with cusps. For this, however,
genuinely new arguments would seem to be needed, as our analysis for
the compact case relies heavily on the symbolic dynamics of
Proposition~\ref{proposition:boundaryCorrespondence} and the
regenerative representation of Gibbs states
(Proposition~\ref{proposition:regeneration}). The geodesic flow on the
modular surface has its own very interesting symbolic dynamics
(cf. for example \cite{series:modular} and
\cite{adler-flatto:cross-section-map}), but this uses a countably
infinite alphabet (the natural numbers) rather than a finite
alphabet. At present there seems to be no analogue of the regenerative
representation theorem (Proposition~\ref{proposition:regeneration})
for Gibbs states on sequence spaces with infinite alphabets.

\bigskip \textbf{B. Tessellations by closed geodesics.}  It is known
that statistical regularities of ``random'' geodesics (where the
initial tangent vector is chosen from the maximal-entropy invariant
measure for the geodesic flow) mimic those of typical long closed
geodesics. This correspondence holds for first-order statistics
(cf. \cite{bowen:equidistribution}), but also for second-order
statistics (i.e., ``fluctuations): see \cite{lalley:axiomA},
\cite{lalley:homology}, \cite{lalley:duke}). Thus, it should be
expected that Theorems
\ref{theorem:random-local}--\ref{theorem:global-stats} have analogues
for long closed geodesics. In particular, we conjecture the
following.

\begin{conj}\label{conj:closed}
 Let $S$ be a closed hyperbolic surface, and let $x\in S$ be a fixed
point on $S$. From among all closed geodesics of length $\leq T$
choose one -- call it $\gamma_{T}$ -- at random, and let $A_{T}$ be
the intersection of $\gamma_{T}$ with the ball $D (x;\alpha T^{-1})$.
Then as $T \rightarrow \infty$ the random collection of arcs $A_{T}$
converge in distribution to a Poisson line process on $B (0;\alpha )$
of intensity $\kappa$.
\end{conj}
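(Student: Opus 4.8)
The plan is to transfer Theorem~\ref{theorem:random-local} from the Liouville-random setting to the closed-geodesic setting by exploiting the equidistribution of long closed geodesics with respect to the measure of maximal entropy, which on a hyperbolic surface coincides with the normalized Liouville measure $\lambda^{*}$. Via Proposition~\ref{proposition:boundaryCorrespondence}, closed geodesics on $S$ of length $\leq T$ correspond, up to a set that is negligible in every relevant sense, to periodic orbits of the suspension flow $\phi_{t}$ on $\Sigma_{F}$, hence to periodic sequences $\omega\in\Sigma$ of prime period $p=p(\omega)$ with $S_{p}F(\omega):=\sum_{j=0}^{p-1}F(\sigma^{j}\omega)\leq T$. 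By the prime geodesic theorem a closed geodesic of length $\leq T$ chosen uniformly at random has length $T-O(1)$ and period $p\approx T/E_{\lambda}F=n$ with fluctuations of order $\sqrt{T}$, so the ball $D(x;\alpha T^{-1})$ is essentially the scaled metric ball of radius $\alpha/\ell(\gamma_{T})$, and the intersection $A_{T}$ is encoded, exactly as in Sections~\ref{sec:decomposition}--\ref{sec:final-step}, by the number $N_{A,B}(\omega)$ of occurrences of the magic subwords for $(A,B;T)$ along one full period of $\omega$. The magic-subword decomposition of Proposition~\ref{proposition:decomposition} is purely geometric-combinatorial and applies verbatim; only its error estimate (Corollary~\ref{corollary:magicSubwordCounts}), currently phrased for $\lambda$, must be re-established for the counting measure on periodic orbits.

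First I would re-prove Propositions~\ref{proposition:measure-asymptotics} and \ref{proposition:quickReturns} for the counting measure. Bowen's equidistribution theorem for closed geodesics gives, for any continuous $g$ on $\Sigma_{F}$,
\[
	\frac{1}{\#\{\gamma:\ell(\gamma)\leq T\}}\sum_{\ell(\gamma)\leq T}\frac{1}{\ell(\gamma)}\int_{\gamma}g
	\;\longrightarrow\;\int g\,d\lambda^{*}.
\]
Applying this to continuous approximants of the indicator of the set $\Upsilon(A,B;T)$ from Proposition~\ref{proposition:measure-asymptotics}, and using that $\lambda^{*}$ charges no single geodesic and in particular assigns measure zero to the ``tangent to $\partial A$ or $\partial B$'' boundary sets, one obtains that the expected number of magic subwords along a random closed geodesic of length $\leq T$, divided by $T$, converges to $\tfrac12\kappa_{g}\beta_{A,B}$; the same input applied to the sets in Proposition~\ref{proposition:quickReturns} shows that the expected number of pairs of magic subwords separated by fewer than $(\log T)^{\kappa}$ letters tends to $0$. (The geometric estimates behind Proposition~\ref{proposition:quickReturns} --- visibility angles, Huber's counting theorem --- use only the local product structure of $\lambda^{*}$, which is still available.) This yields the analogue of Corollary~\ref{corollary:magicSubwordCounts} for the counting measure.

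Next I would run the blocking argument of Section~\ref{sec:final-step} for periodic orbits: cut one period of $\omega$ into $\approx n/(\log n)^{3}$ consecutive blocks of length $(\log n)^{3}$, so that with probability $\to 1$ no block contains two magic subwords and none straddles a block boundary, whence $N_{A,B}$ equals the number of blocks that contain a magic subword. The one genuinely new ingredient is that these block-indicator events must be shown to be \emph{approximately independent} under the counting measure on closed geodesics --- in the Liouville case this was supplied for free by the regenerative representation (Proposition~\ref{proposition:regeneration}), which has no counterpart for the periodic-orbit counting measure. The substitute is a twisted transfer-operator estimate: introduce a perturbed Ruelle operator $\mathcal{L}_{z}$ for the potential $-F$ whose leading eigenvalue generates the distribution of the magic-subword count along a period, and control its spectral gap uniformly in $T$; exponential mixing of $\mathcal{L}_{0}$ then bounds the correlations between the separated block events. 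Feeding this into a Chen--Stein / Poisson-approximation argument (the weak-dependence version of Proposition~\ref{proposition:lsn}) gives that $N_{A,B}$ converges to a Poisson random variable of mean $\tfrac12\kappa_{g}\beta_{A,B}$, and the joint statements for several pairs $(A_{i},B_{i})$ follow exactly as in Section~\ref{sec:final-step} via Proposition~\ref{proposition:mult-lsn}; Proposition~\ref{proposition:sufficientCondition} then upgrades this to convergence in distribution to the Poisson line process of intensity $\kappa_{g}$.

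The hard part will be the uniform control of the twisted transfer operator as $T\to\infty$: the ``observable'' whose occurrences we count is not a fixed H\"older function but a family of cylinder events of length $(\log T)^{2}$, so the standard perturbation theory for Ruelle operators (which assumes a bounded-range, fixed potential perturbation) does not apply off the shelf, and one must track how the spectral gap degrades with the growing cylinder length --- in effect a quantitative, non-stationary refinement of the equidistribution of closed geodesics with rare prescribed patterns. Everything else --- the symbolic reduction, the magic-subword decomposition, the no-quick-returns estimates, and the final Poisson approximation --- is a routine adaptation of the arguments already given for the Liouville-random case.
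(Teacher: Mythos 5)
First, note that the statement you are proving is Conjecture~\ref{conj:closed}: the paper itself offers no proof of it, and explicitly lists it among open problems in section~\ref{sec:ext}, so there is no ``paper proof'' to match your argument against. What you have written is a research program, not a proof, and it contains two genuine gaps --- one of which you candidly acknowledge. The first is your use of Bowen's equidistribution theorem to re-prove the analogues of Propositions~\ref{proposition:measure-asymptotics} and \ref{proposition:quickReturns} for the counting measure on closed geodesics. Bowen's theorem is an unquantified statement about \emph{fixed} continuous observables, whereas the sets $\Upsilon (A,B;T)$ and the quick-return sets shrink with $T$ and have measure of order $T^{-1}$ (or smaller). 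Approximating their indicators by continuous functions and invoking equidistribution gives an error term that is $o(1)$ but with no rate, which is useless against a main term of size $T^{-1}$: you would need an \emph{effective} equidistribution of closed geodesics, with a polynomial error rate that is uniform over the $T$-dependent family of targets. This is exactly the shrinking-target difficulty the paper describes in section~\ref{ssec:strategy} for the Liouville case, where it is overcome by the regenerative representation (Proposition~\ref{proposition:regeneration}) --- a tool that, as you observe, has no counterpart for the periodic-orbit counting measure.

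The second gap is the one you flag yourself: the approximate independence of the block-indicator events under the counting measure. Your proposed substitute --- a twisted Ruelle operator whose spectral gap is controlled uniformly as the ``observable'' is a union of cylinders of length $(\log T)^{2}$ with total mass $O(T^{-1})$ --- is precisely the quantitative, non-stationary refinement that is not available off the shelf, and you give no argument for how the gap degrades with the cylinder length or how the Chen--Stein error is beaten down to $o(1)$. Since both the measure asymptotics and the independence step rest on this unproven uniform spectral control, the proposal reduces the conjecture to another open problem rather than resolving it. The symbolic reduction, the magic-subword decomposition (Proposition~\ref{proposition:decomposition}), and the identification of the intended limit constant $\tfrac{1}{2}\kappa_{g}\beta_{A,B}$ are all correctly transplanted from the Liouville argument, so the outline is a sensible attack on Conjecture~\ref{conj:closed}; but as it stands it is a plan whose two central estimates are asserted, not proved.
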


We do not expect that this will be true on a surface of variable
negative curvature, because the maximal-entropy invariant measure for
the geodesic flow coincides with the Liouville measure only in
constant curvature.

\bigskip \textbf{C. Tessellations by several closed geodesics.}
Given Conjecture \ref{conj:closed}, it is natural to expect that if
two (or more) closed geodesics $\gamma_{T},\gamma '_{T}$ are chosen at
random from among all closed geodesics of length $\leq T$, the
resulting tessellations should be independent. Thus, the intersections
of these tessellations with a ball $D (x,\alpha T^{-1})$ should
converge jointly in law to independent Poisson line processes of
intensity $\kappa$.

\appendix{}
\section{Poisson Line Processes}\label{sec:appendix}

\begin{proof}[Proof of Lemma~\ref{lemma:translation-invariance}]
Rotational invariance is obvious, since the angles $\Theta_{n}$ are
uniformly distributed, so it suffices to establish
invariance by translations along the $x-$axis. To accomplish this, we
will exhibit a sequence $\mathcal{L}_{m}$ of line processes that
converge pointwise to a Poisson line process $\mathcal{L}$, and show
by elementary means that each $\mathcal{L}_{m}$ is translationally
invariant. 

Let $\{(R_{n},\Theta_{n}) \}_{n\in \zz{Z}}$ and $\{\Theta_{n} \}_{n\in
\zz{Z}}$ be the Poisson point process used in the construction
\eqref{eq:point-to-line-mapping} of $\mathcal{L}$.  For each
$m=3,5,7,\dotsc$, let $\mathcal{A}_{m}=\{k\pi / m \}_{0\leq k<m}$ (the
restriction to odd $m$ prevents $\pi /2$ from occurring in $A_{m}$).
For each $n\geq 1$, let $\Theta^{m}_{n}=[m\Theta_{n}]/m$ be the
nearest point in $\mathcal{A}_{m}$ less than $\Theta_{n}$. By
construction, for each $m$ the random variables $\Theta^{m}_{n}$ are
independent and identically distributed, with the uniform distribution
on the finite set $\mathcal{A}_{m}$. Now define $\mathcal{L}_{m}$ to
be the line process constructed in the same manner as $\mathcal{L}$,
but using the discrete random variables $\Theta^{m}_{n}$ instead of
the continuous random variables $\Theta_{n}$.  Clearly, as $m
\rightarrow \infty$ the sequence $\mathcal{L}_{m}$ of line processes
converges to $\mathcal{L}$.

It remains to show that each of the line processes $\mathcal{L}_{m}$
is invariant by translations along the $x-$axis. For this, observe
that for each $\theta_{k}\in \mathcal{A}_{m}$ the thinned process
$\mathcal{R}_{m,k}$ consisting of those $R_{n}$ such that
$\Theta^{m}_{n}=\theta_{k}$ is itself a Poisson point process on
$\zz{R}$ of intensity $\kappa /m$, and that these thinned Poisson
point processes are mutually independent.\footnote{The thinning and
  superposition laws are elementary properties of Poisson point
  processes. The thinning law follows from the superposition property;
  see Kingman~\cite{kingman} for a proof of the latter.}
Consequently, the line process $\mathcal{L}_{m}$ is the superposition
of $m$ independent line processes $\mathcal{L}^{k}_{m}$, with
$k=1,2, \cdots ,m$, where $\mathcal{L}^{k}_{m}$ is the subset of all
lines in $\mathcal{L}^{m}$ that meet the $x-$axis at angle
$\pi /2-\theta_{k}.$ Since the constituent processes
$\mathcal{L}^{k}_{m}$ are independent, it suffices to show that for
each $k$ the line process $\mathcal{L}^{k}_{m}$ is
translation-invariant.  But this is elementary: the points where the
lines in $\mathcal{L}^{k}_{m}$ meet the $x-$axis form a Poisson point
process on the real line, and Poisson point processes on the real line
of constant intensity are translation-invariant.
\end{proof}

\begin{proof} [Proof of Corollary~\ref{corollary:line-intensity}] By
  rotational invariance, it suffices to show this for the
  $x-$axis. Let $\mathcal{L}_{m}$ and $\mathcal{L}^{k}_{m}$ be as in
  the proof of Lemma~\ref{lemma:translation-invariance}; then by an
  easy calculation, the point process of intersections of the lines in
  $\mathcal{L}^{k}_{m}$ with the $x-$axis is a Poisson point process
  of intensity $(\kappa / m ) \sin \theta_{k}$. Summing over $k$ and
  then letting $m \rightarrow \infty$, one arrives at the desired
  conclusion.
\end{proof}

\begin{proof} [Proof of
  Proposition~\ref{proposition:characterization}] The hypothesis that
  $\Gamma$ encloses a strictly convex region guarantees that if a line
  intersects both $A$ and $B$ then it meets each in at most one
  point. Denote by $L_{\{A,B\}}$ the set of all lines that intersect
  both $A$ and $B$. If $A$ and $B$ are partitioned into
  non-overlapping sub-arcs $A_{i}$ and $B_{j}$ then $L_{\{A,B\}}$ is
  the \emph{disjoint} union $\cup_{i,j}L_{\{A_{i},B_{i}\}}$. Since the
  sets $L_{\{A_{i},B_{i}\}}$ are piecewise disjoint, the corresponding
  regions of the strip $\zz{R}\times [0,\pi )$ (in the standard
  parametrization \eqref{eq:point-to-line-mapping}) are
  non-overlapping, and so, by a defining property of the Poisson point
  process $\{(R_{n},\Theta_{n}) \}_{n\not\in \zz{Z}}$, the counts
  $N_{\{A_{i},B_{j}\}}$ are independent Poisson random variables.
  Since the sum of independent Poisson random variables is Poisson, to
  finish the proof it suffices to show that for arcs $A,B$ of length
  $<\varepsilon$ the random variables $N_{\{A,B\}}$ are Poisson, with
  means $\kappa \beta_{A,B}$.

If $\varepsilon >0$ is sufficiently small then any
line $L$ that intersects two boundary arcs $A,B$ of length
$\leq \varepsilon$ must intersect the two straight line segments
$\tilde{A},\tilde{B}$ connecting the endpoints of $A$ and $B$,
respectively; conversely, any line that intersects both
$\tilde{A},\tilde{B}$ will intersect both $A,B$. Therefore, we may
assume that the arcs $A,B,A_{i},B_{i}$ are straight line segments
of length $\leq \varepsilon$. Because Poisson line processes are
rotationally invariant, we may further assume that  $A$ is the
interval $[-\varepsilon /2,\varepsilon /2]\times \{0 \}$.

We now resort once again to the discretization technique used in the
proof of Lemma~\ref{lemma:translation-invariance}. For each
$m=3,5,7,\dotsc$, let $N^{m}_{\{A,B\}}$ be the number of lines in the line
process $\mathcal{L}_{m}$ that cross the segments $A,B$. Clearly,
$N^{m}_{\{A,B\}}\rightarrow N_{\{A,B\}}$ as $m \rightarrow \infty$, so it
suffices to show that for each $m$ the random variable $N^{m}_{\{A,B\}}$
has a Poisson distribution with mean $\mu_{m}\rightarrow \kappa
\beta_{A,B}$. 

Recall that the line process $\mathcal{L}_{m}$ is a superposition of
$m$ independent line processes $\mathcal{L}^{k}_{m}$, and that for
each $k$ the lines in $\mathcal{L}^{k}_{m}$ all meet the $x-$axis at a
fixed angle $|\pi /2-\theta_{k}|$. Hence,
$N^{m}_{\{A,B\}}=\sum_{k}N^{m,k}_{\{A,B\}}$, where $N^{m,k}_{\{A,B\}}$ is the
number of lines in $\mathcal{L}^{k}_{m}$ that cross both $A$ and
$B$. The random variables $N^{m,k}_{\{A,B\}}$ are independent; thus, to
show that $N^{m}_{\{A,B\}}$ has a Poisson distribution it suffices to show
that each $N^{m,k}_{\{A,B\}}$ is Poisson.  By construction, the lines in
$\mathcal{L}^{k}_{m}$ meet the line $(s \cos \theta_{k}, s \sin
\theta_{k})_{s\in \zz{R}}$ at the points of a Poisson point process of
intensity $\kappa /m$; consequently, they meet the $x-$axis at the
points of a Poisson point process of intensity $\kappa |\cos
\theta_{k}|/m$. Now a line that meets the $x-$axis at angle $\theta_{k}$
will cross both $A=[-\varepsilon/2 ,\varepsilon /2]\times \{0 \}$ and $B$ if and
only if its point of intersection with the $x-$axis lies in the
$\theta_{k}-$shadow $J_{k}$ of $B$ on $A$. Therefore, $N^{m,k}_{\{A,B\}}$
has the Poisson distribution with mean $\kappa |J_{k} \cos
\theta_{k}|/m=\kappa \psi_{A,B} (\theta_{k})$. 
It follows that $N^{m}_{\{A,B\}}$ has the Poisson
distribution with mean
\begin{align*}
	EN^{m}_{\{A,B\}}&=m^{-1}\sum_{k=0}^{m-1} \kappa|J_{k}  \cos \theta_{k}|\\
	&=m^{-1}\sum_{k=0}^{m-1} \kappa \psi_{A,B} (\theta_{k})\\
	&\longrightarrow \frac{\kappa }{\pi }  \int_{-\pi /2}^{\pi
	/2}\psi_{A,B} (\theta)\,d\theta . 
\end{align*}

\end{proof}

\begin{proof}
  [Proof of Corollary~\ref{corollary:ppIntensity}]
It suffices to prove this for disks of small radius, because by the
translation-invariance of $\mathcal{L}$,
\[
	EV (D )\sim \frac{1}{\pi
	\varrho^{2}}\int_{D}EV (B (x,\varrho)) \,dx = EV (B
	(0,\varrho)) |D|/ (\pi \varrho^{2})
\]
as $\varrho \rightarrow 0$. Let $\gamma$ be a chord of $B
(0,\varrho)$, and $H_{\gamma}$ the event that $\gamma \in
\mathcal{L}\cap B (0,\varrho)$. Conditional on $H_{\gamma}$, the
number of intersection points on $\gamma$ is Poisson with mean
$2\kappa |\gamma |/\pi $, by Corollary~\ref{corollary:line-intensity}
and Proposition~\ref{proposition:characterization}.\footnote{The event
$H_{\gamma}$ has probability $0$, but it is the limit of the
positive-probability events that $\mathcal{L}$ has a line which
intersects small boundary arcs centered at the endpoints of
$\gamma$. The conditional distribution of $\mathcal{L}$ given
$H_{\gamma}$ can be interpreted as the limit of the conditional
distributions given these approximating events. The independence
assertion of Proposition~\ref{proposition:characterization} guarantees
that, conditional on $H_{\gamma}$, the distribution of
$\mathcal{L}\cap B (0,\varrho)$ is the same as the unconditional
distribution of $(\mathcal{L}\cap B (0,\varrho))\cup \{\gamma \}$.}
Therefore,
\[
	EV (B (0,\varrho))= \frac{1}{2} \frac{2\kappa}{\pi}
	E\left( \sum_{\gamma \in \mathcal{L}\cap B(0,\varrho)} |\gamma
	|\right):=\frac{\kappa}{\pi }E\Psi  
	(\mathcal{L}\cap B (0,\varrho )) .
\]
(The factor of $1/2$ accounts for the fact that each intersection
point lies on two chords.)

The expectation $E\Psi (\mathcal{L}\cap B (\cup ,\varrho))$ is
easily evaluated using the standard construction of the Poisson line
process (Definition~\ref{definition:plp}). The lines of $\mathcal{L}$
that cross  $B (0,\varrho)$ are precisely those corresponding to
points $R_{n}$ such that
$-\varrho <R_{n}<\varrho$. For any such $R_{n}$, the length of the chord $\gamma
=\gamma_{n}$ is $|\gamma_{n}|=2\sqrt{\rho^{2}-R_{n}^{2}}$.
Therefore,
\[
	E\Psi (\mathcal{L}\cap B (0,\varrho))=\kappa
	\int_{r=-\varrho }^{\varrho} 2\sqrt{\rho^{2}-r^{2}} \,dr=\kappa \pi \varrho^{2}.
\]

\end{proof}

\begin{proof}
  [Proof of Proposition~\ref{proposition:ergodicity-plp}] Let
  $\mathcal{L}$ be the Poisson line process with intensity $\kappa$,
  and denote by $\tau_{z}$ the translation by $z\in \zz{R}^{2}$.  It
  suffices to prove that for any two bounded, continuous functions
  $f,g:\mathcal{C}\rightarrow \zz{R}$,
\begin{equation}\label{eq:mixing-plp}
	\lim_{|z| \rightarrow \infty}Ef (\mathcal{L})g
	(\tau_{z}\mathcal{L})=
	Ef (\mathcal{L})Eg (\mathcal{L}).
\end{equation}
Since the Poisson line process is rotationally invariant, it suffices
to consider only translations $\tau_{z}$ for $z= (x,0)$ on the
$x-$axis. Moreover, since continuous functions that depend only on the
restrictions of configurations to balls are dense in the space of all
bounded, continuous functions, it suffices to establish
\eqref{eq:mixing-plp} for functions $f,g$ that depend only on
configurational restrictions to the ball of radius $r>0$ centered at
the origin.

To prove \eqref{eq:mixing-plp}, we will show that on some probability
space there are Poisson line processes
$\mathcal{L},\mathcal{L}',\mathcal{L}''$, each with intensity $\kappa$,
such that 
\begin{itemize}
\item [(a)] the line processes $\mathcal{L}'$ and $\mathcal{L}''$ are independent;
\item [(b)]  $f (\mathcal{L})=f (\mathcal{L}')$ with probability one; and
\item [(c)] $g (\tau_{z}\mathcal{L})=g (\tau_{z}\mathcal{L}'')$ with
probability $\rightarrow 1$ as $|z| \rightarrow \infty$. 
\end{itemize}
It will then follow, by  translation invariance, that
\begin{align*}
	|Ef (\mathcal{L})g(\tau_{z}\mathcal{L})-Ef (\mathcal{L})Eg
	(\mathcal{L})|&=|Ef (\mathcal{L})g(\tau_{z}\mathcal{L})-Ef (\mathcal{L}')g
	(\mathcal{L}'')|\\
	&=|Ef (\mathcal{L})g(\tau_{z}\mathcal{L})-Ef (\mathcal{L}')g
	(\tau_{z}\mathcal{L}'')|\\
	&\leq 2\xnorm{f}_{\infty}\xnorm{g}_{\infty} P\{g
	(\tau_{z}\mathcal{L})\not =g (\tau_{z}\mathcal{L}'') \}
	\longrightarrow 0.
\end{align*}
The line processes $\mathcal{L},\mathcal{L}',\mathcal{L}''$ can be
built on any probability space that supports independent Poisson point
processes $\{R'_{n} \}_{n\in \zz{Z}}$ and $\{R''_{n} \}_{n\in \zz{Z}}$
on $\zz{R}$ of intensity $\kappa$, and independent sequences
$\{\Theta '_{n}\}_{n\in \zz{Z}}$ and $\{\Theta ''_{n} \}_{n\in
\zz{Z}}$ of random variables uniformly distributed on the interval
$[-\pi ,\pi]$. Let $\mathcal{L}'$ be the line process obtained by
using the ``standard construction'' (that is, the construction
explained in Definition~\ref{definition:plp}) with the point process
$\{R'_{n} \}_{n\in \zz{Z}}$ and the accompanying uniform random
variables $\{\Theta '_{n}\}_{n\in \zz{Z}}$, and let $\mathcal{L}''$ be
the line process obtained by the standard construction using the point
process $\{R''_{n} \}_{n\in \zz{Z}}$ and the random variables
$\{\Theta ''_{n}\}_{n\in \zz{Z}}$.  Clearly, $\mathcal{L}'$ and
$\mathcal{L}''$ are independent.

The line process $\mathcal{L}$ is constructed by splicing the marked
Poisson point processes $\mathcal{R}'=\{(R'_{n} ,\Theta'_{n})\}_{n\in
\zz{Z}}$ and $\mathcal{R}''=\{(R''_{n} ,\Theta''_{n}) \}_{n\in
\zz{Z}}$ as follows: in the interval $(-r,r)$, use the marked points
of $\{(R'_{n} ,\Theta'_{n})\}_{n\in \zz{Z}}$; but in $\zz{R}\setminus
(-r,r)$, use the marked points of $\{(R''_{n} ,\Theta''_{n}) \}_{n\in
\zz{Z}}$. Thus, the resulting marked point process
$\mathcal{R}=\{(R_{n},\Theta_{n}) \}_{n\in \zz{Z}}$ consists of (i)
all pairs $(R'_{n},\Theta '_{n})$ such that $-r<R'_{n}<r$, and (ii)
all pairs $(R''_{n},\Theta ''_{n})$ such that $R''_{n}\not \in
(-r,r)$. By standard results in the elementary theory of Poisson
processes, the marked point process $\mathcal{R}$ has the same
distribution as $\mathcal{R}'$ and $\mathcal{R}''$, in particular,
$\{R_{n} \}_{n\in \zz{Z}}$ is a rate-$\kappa$ Poisson point process
on $\zz{R}$, and the random variables $\{\Theta_{n} \}_{n\in \zz{Z}}$
are independent and uniformly distributed on $[-\pi ,\pi]$. Let
$\mathcal{L}$ be the Poisson line process constructed using
$\mathcal{R}$.

It remains to show that the Poisson line processes
$\mathcal{L},\mathcal{L}',\mathcal{L}''$ satisfy properties (b) and
(c) above. Observe first that in the standard construction
(Definition~\ref{definition:plp}), only those pairs
$(R_{n},\Theta_{n})$ such that $R_{n}\in (-r,r)$ will produce lines
that intersect the ball $B (0,r)$ of radius $r$ centered at the
origin. Consequently, the restrictions of $\mathcal{L}$ and
$\mathcal{L}'$ to $B (0,r)$ are equal; since $f$ depends only on the
configuration in $B (0,r)$, it follows that $f (\mathcal{L})=f
(\mathcal{L}')$. 

Next, consider the configurational restrictions of $\mathcal{L}$ and
$\mathcal{L}''$ to the ball $B ((x,0),r)$ for $x\gg 2r$. In the
standard construction, a pair $(R_{n},\Theta_{n})$ such that $R_{n}\in
(-r,r)$ will produce a line of $\mathcal{L}$ that intersects $B
((x,0),r)$ only if $|\tan \Theta_{n}|\leq r/ (x-2r)$. The probability
that there is such a pair, in either $\mathcal{R}$ or $\mathcal{R}''$,
tends to $0$ as $x \rightarrow \infty$; hence, with probability
$\rightarrow 1$, the restrictions of $\mathcal{L}$ and $\mathcal{L}''$
agree in $B ((x,0),r)$, and on this event  $g (\mathcal{L})=g
(\mathcal{L}'')$. 
\end{proof}

\begin{proof}[Proof of Corollary~\ref{corollary:poly-freqs}]
The number of lines in a Poisson line process $\mathcal{L}$
that intersect a given line segment of length $m$ has the Poisson
distribution with mean $C{\kappa}m$, where $C$ is a finite positive
constant not depending on either $m$ or $\kappa$. Consequently,
the probability that the number of polygons in the induced
tessellation of the plane  intersecting one of the four sides of
$[-n,n]^{2}$ exceeds $n^{3/2}$ is exponentially small in $n$.

Given a line configuration $\mathcal{L}$, let $1/f (\mathcal{L})$ be
the area of the polygon containing the origin in the induced
tessellation. (This is well-defined and positive with probability
$1$.) Let $A^{-}_{n}$ be the union of all polygons of the tessellation
that lie entirely in the open square $(-n,n)^{2}$, and let $A^{+}_{n}$
be the union of the polygons that intersect $[-n,n]^{2}$. Then
\[
	\int_{A^{-}_{n}} f (\tau_{z}\mathcal{L}) \,dz
	\quad \text{and} \quad \int_{A^{+}_{n}} f (\tau_{z}\mathcal{L}) \,dz
\]
count the number of polygons in $A^{-}_{n}$ and $A^{+}_{n}$,
respectively; since the difference between these is less than
$n^{3/2}$, except with exponentially small probability, it follows
that except with small probability 
\[
	\left\lvert F_{n}- \int_{[-n,n]^{2}} f (\tau_{z}\mathcal{L})
	\,dz\right\rvert \leq n^{3/2}.
\]
Hence, by the multi-parameter ergodic theorem (see, for
example,~\cite{Calderon}), $F_{n}/n^{2}\rightarrow Ef (\mathcal{L})$
almost surely.

The proof of the assertion regarding empirical frequencies of $k-$
gons is similar. If  $G_{k}$ is the  event that the polygon containing
the origin is a $k-$gon, then  the total number of $k-$gons in the
region $A^{\pm}_{n}$ is
\[
	\int_{A^{\pm}_{n}} (f\mathbf{1}_{G_{k}}) (\tau_{z}\mathcal{L})\,dz.
\]
Hence,  the
ergodic theorem  implies that the number of $k-$gons divided by
$n^{2}$ converges to $E (f\textbf{1}_{G_{k}} (\mathcal{L}))$, and it
follows that the fraction of $k-$gons converges to 
\[
	\phi _{k}=\frac{E (f\textbf{1}_{G_{k}} (\mathcal{L}))}{Ef (\mathcal{L})}.
\]

Now consider the number of vertices $V_{n}$. Because there is
probability $0$ that three distinct lines of a Poisson line process
meet at a point, all interior vertices are shared by exactly 4 edges,
and each edge is incident to two vertices; thus, since the number of
vertices on the boundary of the square is $O (n^{3/2})$, we have
$\mathcal{E}_{n}=2V_{n}+O (n^{3/2})$.  By Euler's formula,
$V_{n}-\mathcal{E}_{n}+F_{n}=1$, so $V_{n}=F_{n}+O (n^{3/2})$; hence,
\[
	\lim_{n \rightarrow \infty }V_{n}/n^{2} = \lim_{n \rightarrow
	\infty }N_{n}/n^{2} .
\]
The value of the limit is determined by
Corollary~\ref{corollary:ppIntensity},
which implies that $EV_{n}=4\kappa^{2}n^{2}/\pi .$
\end{proof}

\bibliographystyle{plain}
\bibliography{mainbib}

\end{document}